%
%
%
%
%
\documentclass[11pt]{amsart}
\usepackage{enumerate}
\usepackage[dvips]{color}
\usepackage{epstopdf}
\usepackage{graphicx}
\graphicspath{ {images/} }
\usepackage{tikz}
\usepackage{epsfig}
\usepackage{color}
\usepackage{mathrsfs}
\DeclareMathAlphabet{\mathcalligra}{T1}{calligra}{m}{n}

\usepackage{amsmath}
\usepackage{mathtools}
\usepackage{amssymb} 
\usepackage{latexsym}
\usepackage{array}
\usepackage{ifthen}
\usepackage{amsthm}

\usepackage[dvips]{color}
\usepackage{tikz-cd}
\usepackage[all,cmtip]{xy}
\usepackage{verbatim}
\usepackage{color}






\newtheorem{theorem}{Theorem}[section]
\newtheorem{corollary}[theorem]{Corollary}
\newtheorem{lemma}[theorem]{Lemma}
\newtheorem{proposition}[theorem]{Proposition}


\theoremstyle{definition}
\newtheorem{definition}[theorem]{Definition}
\newtheorem{example}[theorem]{Example}
\newtheorem{remark}[theorem]{Remark}



\newcommand{\C}{  {\mathbb{C}}  }

\newcommand{\ep}{{\varepsilon}}

\newcommand{\si}{{\sigma}}

\newcommand{\EL}{\mathcal{L}}
\newcommand{\vf}{\varphi}

\newcommand{\Y}{\mathcal{Y}}
\newcommand{\GE}{\mathcal{G}}

\hyphenation{pa-ra-me-tri-za-tion}



\title{Equisingular Deformations of Legendrian Curves}
\author{Ana Rita Martins}
\author{Marco Silva Mendes}
\author{Orlando Neto}

\begin{document}

\maketitle

\begin{abstract}
We construct equisingular semiuniversal deformations  of Legendrian curves.
\end{abstract}

\section{Introduction}

To consider deformations of the parametrization of a Legendrian curve is 
a good first approach in order to understand Legendrian curves. 
Unfortunately, this approach cannot be generalized to higher dimensions.
On the other hand the obvious definition of deformation has its own problems.
First, not all deformations of a Legendrian curve are Legendrian. 
Second, flat deformations of the conormal of $y^k-x^n=0$ are all rigid, 
as we recall in example \ref{EX}, hence there would be too many rigid Legendrian curves.

We pursue here the approach initiated in \cite{CN}, following the Sophus Lie original approach to contact transformations: to look at [relative] contact transformations as maps that take [deformations of] plane curves into [deformations of] plane curves. We study the category of equisingular deformations of the conormal of a plane curve $Y$ replacing it by an equivalent category $\mathcal{D}\textit{ef}^{\; es,\mu}_{\,Y}$, a category of equisingular deformations of $Y$ where the isomorphisms do not come only from diffeomorphisms of the plane but also from contact transformations. Here $\mu$ stands for "microlocal", which means "locally" in the cotangent bundle (cf. \cite{KA}, \cite{KK}).

Example \ref{LIE} presents contact transformations that transform a germ of a plane curve $Y$ into the germ of a plane curve $Y^\chi$ such that $Y$ and $Y^\chi$ are not topologically equivalent or are topologically equivalent but not analytically equivalent.

We call a deformation with equisingular
plane projection an equisingular deformation of a Legendrian curve. The flatness of the plane projection is a constraint strong enough to
avoid the problems related with the use of a naive definition of deformation 
 and loose enough so that we have enough deformations.

In section \ref{SECI} we  use the results of section \ref{SECPARA} on equisingular deformations of the parametrization of a Legendrian curve
to show that there are semiuniversal equisingular deformations of a Legendrian curve. 
In particular, we show that the base space of the semiuniversal equisingular deformation is smooth.
This argument does not produce a constructive proof of the existence of the semiuniversal deformation in its standard form.
In section \ref{SECII} we construct a semiuniversal  equisingular deformation of a Legendrian curve $L$ when 
$L$ is the conormal of a Newton non-degenerate plane curve, generalizing the results of \cite{CN}.
This type of assumption was already necessary when dealing with plane curves (see \cite{GLS}). This construction is used in \cite{SQH} to extend the results of \cite{CN} and \cite{KODAIRAS}, constructing moduli spaces for Legendrian curves that are the conormal of a semiquasihomogeneous plane curve with a fixed equisingularity class.  

In section \ref{SECDEFORM} we recall some basic results on deformations of curves. In sections  \ref{S:RCG} and \ref{S:RLC} we introduce relative contact geometry (see \cite{AN}, \cite{Martins} and \cite{Mendes}).

\section{Deformations}\label{SECDEFORM}

We will only consider germs of complex spaces, maps and ideals, although sometimes we will chose convenient representatives.
We will follow the definitions and notations of \cite{GLS}.

Let $S$ be the germ of a complex space at a point $o$.
Let $\mathfrak m_S$ be the maximal ideal of the local ring $\mathcal O_{S,o}$
Let $T_oS$ be the dual of the vector space $\mathfrak m_S/\mathfrak m^2_S$. Let $X$ be a smooth manifold and $x\in X$.
We denote by $\imath$ or $\imath_S$ [$\imath_X$] the immersions $(S,o)\hookrightarrow (T_oS,0)$  
[$(X\times S,(x,o))\hookrightarrow (X\times T_oS,(x,0))$].

Let $\widetilde{\frak M}$ be an $\mathcal O_{T_oS,0}$-module 
[$\widetilde\alpha$ be a section of $\widetilde{\frak M}$, 
$\widetilde Y$ be an analytic set of $(T_oS,0)$]. 
Let ${\frak M}$ be an $\mathcal O_{S,o}$-module 
[$\alpha$ be a section of ${\frak M}$, 
$Y$ be an analytic set of $(S,o)$]. 
We say that $\widetilde{\frak M}$ 
[$\widetilde\alpha$, $\widetilde Y$] is a lifting of 
${\mathfrak M}$  
[$\alpha$, $Y$] if $\imath^*\widetilde{\mathfrak M}= \mathfrak M$ 
[$\imath^*\widetilde{\alpha}=\alpha, 
\imath^* I_{\widetilde{Y}}=I_Y$].

Let $Y$ be a reduced analytic set of $(\mathbb C^n,0)$. In order to define a deformation of $Y$ over $S$ we need to choose a section $\sigma$ of the projection $q:\mathbb C^n\times S\to S$. 
We say that a section $\widetilde\sigma: T_oS\to \mathbb C^n\times T_oS$ is a lifting of $\sigma$ if 
$\widetilde\sigma\circ i=i\circ\sigma$.
Unless we say otherwise we assume $\sigma$ to be trivial. If $S$ is reduced, $\sigma$ is trivial if and only if 
$\sigma(S)=\{0\}\times S$. In general, $\sigma$ is trivial if and only if it admits a trivial lifting to $T_oS$.

Let $\mathcal Y$ be an analytic subset of $\mathbb C^n\times S$. 
For each $s\in S$, let $\mathcal Y_s$ be the fiber of 
\begin{equation}\label{MAP}
\mathcal Y\hookrightarrow \mathbb C^n\times S \to S.
\end{equation}
 Let $i:Y\hookrightarrow \mathcal Y$ be a morphism of complex spaces that defines an isomorphism of $Y$ into $\mathcal Y_o$. We say that 
$Y\hookrightarrow \mathcal Y$ defines the \em deformation \em 
 (\ref{MAP}) of $Y$ over $S$ if 
(\ref{MAP})
 is flat.
 
 Every deformation is isomorphic to a deformation with trivial section.

Assume that $Y$ is a hypersurface of $\mathbb C^n$ and $f$ is a generator of the defining ideal of $Y$. Let $j$ be the immersion $\mathbb C^n\to \mathbb C^n\times T$ and let $r$ be the projection $ \mathbb C^n\times T\to  \mathbb C^n$. There is a generator $F$ of the defining ideal of $\mathcal Y$ such that $j^*F=f$. We say that $F$ defines a \em deformation of the equation \em of $Y$.

Let $Y\hookrightarrow \mathcal Y_i \hookrightarrow \mathbb C^n\times T\to T$ be two deformations of a reduced analytic set $Y$ over $T$. 
We say that an isomorphism $\chi:  \mathbb C^n\times T\to  \mathbb C^n\times T$ is an isomorphism of deformations if $q\circ \chi=q$, $r\circ \chi \circ j=id_{\mathbb C^n}$ and $\chi$ induces an isomorphism from $\mathcal Y_1$ onto $\mathcal Y_2$.

Given a morphism of complex spaces $f:S\to T$ and a deformation $\mathcal Y$ of $Y$ over $T$, $f^*\mathcal Y=S\times_T\mathcal Y$ defines a deformation of $Y$ over $S$.

We say that a deformation $\mathcal Y$ of $Y$ over $T$ is a \emph{versal deformation} of $Y$ if given 
\begin{itemize}
\item a closed embedding of complex space germs $f: T'' \hookrightarrow T'$,
\item a morphism $g:T'' \to T$,
\item a deformation $\mathcal Y'$ of $Y$ over $T'$ such that $f^\ast \mathcal Y' \cong g^\ast \mathcal Y$,
\end{itemize}
there is a morphism of complex analytic space germs $h: T' \to T$ such that
\[
h \circ f=g \qquad \text{and} \qquad h^\ast \mathcal Y \cong \mathcal Y'.
\]
If $\mathcal Y$ is versal and for each $\mathcal Y'$ the tangent map $T(h):T_{T'} \to T_T$ is determined by $\mathcal Y'$, $\mathcal Y$ is called a \emph{semiuniversal deformation} of $Y$.

We will now introduce deformations of a parametrization.

Assume the curve $Y$ has irreducible components $Y_1,\ldots,Y_r$. 
Set $\bar{\mathbb C}= \bigsqcup_{i=1}^r \bar{C}_i$ where each $\bar{C}_i$ is a copy of $\mathbb C$. 
Let $\varphi_i$ be a parametrization of $Y_i$, $1 \leq i \leq r$. 
The map $\varphi : \bar{\mathbb C} \to \mathbb C^n$ such that $\varphi |_{\bar{C}_i} = \varphi_i$, $1 \leq i \leq r$ is called a  \emph{parametrization} of $Y$. 

Let $\imath,\imath_n$ denote the inclusions $\bar{\mathbb C} \hookrightarrow  \bar{\mathbb C} \times T$, $\mathbb C^n \hookrightarrow  {\mathbb C}^n \times T$.
Let $\bar q$ denote the projection $\bar{\mathbb C} \times T \to T$.
We say that a morphism of complex spaces $\Phi : \bar{\mathbb C} \times T \to \mathbb C^n \times T$ is a \emph{deformation of $\varphi$ over $T$} if $\imath_n\circ\varphi=\Phi\circ \imath$ and $q_n\circ\Phi=\bar q$.

We denote by $\Phi_i$ the composition
$
\bar{C}_i \times T \hookrightarrow \bar{\mathbb C} \times T \rightarrow \mathbb C^n \times T \to \mathbb C^n$, 
$1 \leq i \leq r$.
The maps $\Phi_i$, $1 \leq i \leq r$, determine $\Phi$.
Let $\Phi$ be a deformation of $\varphi$ over $T$. Let $f: S\to T$ be a morphism of complex spaces. We denote by $f^\ast \Phi$ the deformation of $\varphi$ over $S$ given by
\[
(f^\ast \Phi)_i= \Phi_i \circ (id_{\bar{C}_i} \times f).
\]

Let $\Phi':\bar{\C} \times T \to \C^n \times T$ be another deformation of $\varphi$ over $T$. A morphism from $\Phi'$ into $\Phi$ is a pair $(\chi,\xi)$ where $\chi :\C^n \times T \to \C^n \times T$ and $\xi : \bar{\C} \times T \to \bar{\C} \times T$ are isomorphisms of complex spaces such that the diagram
\begin{equation}\label{BIGDIAGRAM}
\xymatrix{
T  &\bar{\C}\times T \ar[l] \ar[r]^{\Phi} &\C^n\times T \ar[r] &T\\
&\bar{\C} \ar@{^{(}->}[u] \ar@{_{(}->}[d] \ar[r]^{\varphi} &\C^n\times\{0\} \ar@{^{(}->}[u] \ar@{_{(}->}[d] \\
T \ar[uu]^{id_T}  &\bar{\C}\times T  \ar[l] \ar@/^2pc/[uu]^{\xi}  \ar[r]^{\Phi'} &\C^n\times T  \ar@/_2pc/[uu]_{\chi} \ar[r] &T \ar[uu]_{id_T}
}
\end{equation}
commutes.

Let $\Phi'$ be a deformation of $\varphi$ over $S$ and $f:S \to T$ a morphism of complex spaces. A \emph{morphism of $\Phi' $ into $ \Phi$ over $f$} is  a morphism from $\Phi'$ into $f^\ast \Phi$. There is a functor $p$ that associates $T$ to a deformation $\Psi$ over $T$ and $f$ to a morphism of deformations over $f$. 

Given a parametrization $\varphi$ of a plane curve $Y$ and a deformation $\Phi$ of $\varphi$, $\Phi$ is the parametrization of a hypersurface $\mathcal Y$ of $\mathbb C^2\times T$ that defines a deformation of (the equation of) $Y$.

Let $Y,Z$ be two germs of plane curves of $(\mathbb C^2,0)$.
\begin{definition}\label{EQUI}
Two plane curves $Y,Z$ are \emph{equisingular} if there are neighborhoods $V,W$ of  $0$ and an homeomorphism $\vf: V \to W$ such that $\vf(Y \cap V) =Z \cap W$.
\end{definition}
\begin{theorem}\label{TEQUI}
Let $(Y_i)_{i \in I}\, [(Z_j)_{j \in J}]$ be the set of branches $Y\,[Z]$. 
The curves $Y,Z$ are equisingular if and only if there is a bijection $\vf: I \to J$ such that $Y_i$ and $Z_{\vf(i)}$ have the same Puiseux exponents for each $i \in I$ and the contact orders $o(Y_i,Y_j)$, $o(Z_{\vf(i)}, Z_{\vf(j)})$ are equal, for each $i,j \in I$, $i \neq j$. 
\end{theorem}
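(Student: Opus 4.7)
The plan is to translate both sides of the equivalence into data attached to the embedded resolution of $Y$ (and of $Z$) by successive point blowups, and to show that each side is equivalent to the weighted isomorphism class of the resulting tree of infinitely near points. As a preliminary reduction, the number of branches of $Y$ equals the number of connected components of the link $Y\cap S^3_\varepsilon$, so any homeomorphism as in Definition \ref{EQUI} induces a bijection $\vf$ between the branches of $Y$ and those of $Z$.

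For the direction $(\Rightarrow)$ I would argue by induction on the number of blowups required to resolve $Y$. The base step is that the multiplicity $m_0(Y_i)$ of each branch is a topological invariant of the link (for example via a linking-number computation, or the Milnor fibration). The inductive step requires showing that $\vf$ descends to a multiplicity-preserving bijection between the infinitely near singular points of $Y$ and those of $Z$; iterating yields an isomorphism of the weighted trees of infinitely near points of $Y$ and of $Z$. From this tree one reads off the Puiseux exponents of each branch through the classical Enriques--Zariski formulas, and one recovers the contact order $o(Y_i,Y_j)$ as the number of infinitely near points common to the strict transforms of $Y_i$ and $Y_j$ along the resolution.

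For the converse direction I would reconstruct the resolution tree from the combinatorial data on the right-hand side: the Puiseux exponents of $Y_i$ determine the sequence of blowups that resolves $Y_i$ together with the multiplicity sequence along it, while the contact orders $o(Y_i,Y_j)$ prescribe exactly when the strict transforms of two different branches first separate. Once the trees for $Y$ and $Z$ are identified, a homeomorphism of punctured neighborhoods on the resolutions can be constructed exceptional component by exceptional component (each being just a copy of $\mathbb P^1$ with a prescribed adjacency pattern) and then pushed down to the required homeomorphism of Definition \ref{EQUI}; alternatively one invokes the Brauner--Burau--Zariski theorem that the topological type of $(\mathbb C^2, Y)$ is determined by the weighted dual graph of its minimal embedded resolution.

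The main obstacle is the invariance-under-blowup step in the $(\Rightarrow)$ direction: the homeomorphism $\vf$ lives only downstairs on $(\mathbb C^2,0)$ and does not lift to the blowup a priori, so one must argue intrinsically that the combinatorial data extracted after each blowup depend only on the topological germ and not on $\vf$. I would follow the detailed treatment given in \cite{GLS}, where the equivalence between topological, combinatorial (multiplicity-sequence), and Puiseux-theoretic characterizations of equisingularity is carried out in full.
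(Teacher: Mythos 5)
The paper does not prove this theorem at all: it is recalled as a classical fact (it is Zariski's characterization of equisingularity, treated in detail in \cite{GLS}, Chapter I, and going back to Brauner, Burau and Zariski), so there is no proof in the paper to compare yours against. Your outline is the standard argument and is sound: reduce both sides to the weighted tree of infinitely near points of an embedded resolution, prove the tree is a topological invariant for the forward direction, and reconstruct the homeomorphism from the tree (or invoke the theorem that the weighted dual graph determines the topological type of the pair) for the converse. You correctly identify where all the real work lies, namely the invariance-under-blowup step: a germ homeomorphism does not lift to the blowup, and the honest proofs of that direction go through knot-theoretic invariants of the link (iterated torus knot types and pairwise linking numbers) rather than a literal descent of $\vf$ to infinitely near points; since you delegate exactly this step to \cite{GLS}, the sketch is acceptable. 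One small definitional caveat: from the paper's later usage (Lemma \ref{VERYGOODCURVE}, where $\{y=\lambda_1x^2\}$ and $\{y=\lambda_2x^2\}$ have ``contact of order $2$''), $o(Y_i,Y_j)$ is the coincidence exponent of Puiseux expansions, not the number of common infinitely near points that you use; the two are equivalent data once the Puiseux exponents are fixed (via Noether's formula relating intersection multiplicity, coincidence exponent and shared infinitely near points), but you should state that translation explicitly so that the invariant you extract from the tree is the one appearing in the statement.
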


The definition of \em equisingular deformation \em of the parametrization [equation] of a plane curve over a complex space is very long and technical. We will omit it. See definitions 2.36  and 2.6 of \cite{GLS}. We will present now the main properties of equisingular deformations, which characterize them completely.

\begin{theorem}\label{EQUIVALENCE}\emph{(Theorem 2.64 of \cite{GLS})}
Let $Y$ be a reduced plane curve. Let $\varphi$ be a parametrization of $Y$. Let $f$ be an equation of $Y$.
Every equisingular deformation of $\varphi$ induces a unique equisingular deformation of $f$.
Every equisingular deformation of  $f$ comes from a deformation of 
$\varphi$.
\end{theorem}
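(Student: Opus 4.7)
The theorem asserts an equivalence between equisingular deformations of the parametrization $\varphi$ and equisingular deformations of the equation $f$, so the plan is to handle each direction in turn.

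For the implication from parametrization to equation, given an equisingular deformation $\Phi$ of $\varphi$, the paragraph preceding the statement already notes that $\Phi$ realizes $\bar{\mathbb C}\times T$ as the parametrization of a hypersurface $\mathcal Y\subset \mathbb C^2\times T$ flat over $T$, so there exists an equation $F$ of $\mathcal Y$, unique up to a unit, with $j^*F=f$. The content is to verify that equisingularity in the sense of the parametrization forces equisingularity of $F$. I would do this by tracking the sections through infinitely near points that enter the GLS definition: the system of sections on compatible normalizations attached to $\Phi$ projects, via $\Phi$ itself, to a system of sections on the iterated blow-ups of $\mathbb C^2\times T$ along the singular strata of $\mathcal Y$, and the Puiseux exponents together with mutual contact orders between branches — the invariants recorded in Theorem \ref{TEQUI} — are preserved on the nose.

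For the converse, given an equisingular deformation $F$ of $f$, I would construct a deformation $\Phi$ of $\varphi$ via simultaneous normalization. Equisingularity of $F$ implies that the normalization $n\colon \widetilde{\mathcal Y}\to\mathcal Y$ is flat over $T$ and commutes with base change (the equinormalizability theorem for equisingular families). Hence, after choosing a uniformizer on each branch, one has $\widetilde{\mathcal Y}\cong \bigsqcup_i \bar C_i\times T$, and the composition
\[
\Phi\colon \bar{\mathbb C}\times T\;\xrightarrow{\;\sim\;}\;\widetilde{\mathcal Y}\;\xrightarrow{\;n\;}\;\mathcal Y\;\hookrightarrow\;\mathbb C^2\times T
\]
is a deformation of $\varphi$ inducing $F$. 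Equisingularity of this $\Phi$ then follows by running the tracking argument of the first direction in reverse, using that the resolution tree of $\mathcal Y$ pulls back compatibly to the branches.

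The main obstacle is precisely the equinormalizability step: showing that an equisingular family of reduced plane curve singularities admits a simultaneous normalization. This is the hard theorem (due to Teissier in the hypersurface setting) on which the whole equivalence hinges, and it is where the bulk of the work in the GLS proof sits. Once simultaneous normalization is available, the remaining matching of equisingularity data between the parametrization and the equation becomes essentially bookkeeping along the iterated blow-ups.
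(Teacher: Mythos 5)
The paper offers no argument of its own here: Theorem \ref{EQUIVALENCE} is quoted directly from Greuel--Lossen--Shustin (Theorem 2.64 of \cite{GLS}), so your proposal has to be measured against that proof, and against it there is a genuine gap. The theorem concerns deformations over arbitrary complex space germs, in particular non-reduced ones such as $T_\varepsilon$, and both halves of your argument tacitly assume the base is reduced. In the direction from $\varphi$ to $f$ you propose to check equisingularity by verifying constancy of Puiseux exponents and contact orders of the fibres (the invariants of Theorem \ref{TEQUI}); over a fat point there are no fibres besides the special one, and equisingularity is instead \emph{defined} (Definitions 2.6 and 2.36 of \cite{GLS}) by the existence of compatible equimultiple sections through the infinitely near points, so what actually has to be proved is equimultiplicity of the equation family and of its successive strict transforms along the sections induced by $\Phi$ --- this is genuine work, not bookkeeping. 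The problem is worse in the converse direction: when $T$ is non-reduced the total space $\mathcal Y$ is non-reduced and its normalization is not even defined, so the equinormalizability theorem you invoke (Teissier's $\delta$-constant criterion, treated in \cite{GLS} for reduced bases) simply does not apply. The proof in \cite{GLS} instead builds the deformation of the parametrization out of the system of equimultiple sections that is part of their definition of an equisingular deformation of the equation, blowing up along these sections (simultaneous embedded resolution) until the strict transform is smooth over $T$ and reading off $\Phi$ from the resolved family; that argument is valid over arbitrary, possibly non-reduced, base germs.

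This is not a pedantic distinction for the present paper: Theorem \ref{EQUIVALENCE} is applied precisely over non-reduced bases ($T_\varepsilon$ in Theorem \ref{Tep}, small extensions in the versality arguments of Theorems \ref{T:DEFES} and \ref{LAST}), so a proof covering only reduced $T$ would not support these uses. If you do restrict to reduced base germs, your outline is essentially sound --- there equisingularity gives $\delta$-constancy, hence equinormalizability, and topological triviality of the fibres characterizes equisingularity (Corollary 2.68 of \cite{GLS}) --- but the general statement requires the section-and-blow-up formulation rather than simultaneous normalization.
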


\begin{theorem}\emph{(Corollary 2.68 of \cite{GLS})}
A deformation of the equation of a reduced plane curve $Y$ over a reduced complex space is equisingular if and only if the topology of the fibers does not change.
\end{theorem}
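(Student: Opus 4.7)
The plan is to prove both directions by reducing to the numerical characterization of equisingularity given in Theorem \ref{TEQUI}, using Theorem \ref{EQUIVALENCE} to pass freely between deformations of the equation and deformations of the parametrization.

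For the forward implication, suppose the deformation $\mathcal Y\to S$ is equisingular. By Theorem \ref{EQUIVALENCE} it comes from an equisingular deformation of the parametrization $\varphi$ of $Y$. The definition of an equisingular deformation of $\varphi$ (GLS 2.36) is designed precisely to keep the Puiseux characteristic exponents of each branch constant, together with the pairwise contact orders among branches, as we move along $S$. Combining this with Theorem \ref{TEQUI}, any two fibers have the same list of invariants, hence are pairwise equisingular in the sense of Definition \ref{EQUI}. In particular the topological type of the fibers is constant along $S$.

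For the reverse implication, assume that the topology of the fibers of $\mathcal Y\to S$ is constant. Since $S$ is reduced we may test properties at closed points. Apply Theorem \ref{TEQUI} fiberwise: every fiber $\mathcal Y_s$ and the special fiber $Y$ have the same Puiseux exponents branch by branch and the same pairwise contact orders. The substantive content is now to upgrade this closed-point constancy to a deformation-theoretic statement, i.e.\ to produce a simultaneous normalization of $\mathcal Y$ over $S$ whose restriction to each point is the parametrization of $\mathcal Y_s$. This is done by invoking the $\delta$-constant theorem (Teissier, Brian\c con--Galligo--Granger): since the $\delta$-invariant is a topological invariant of a plane curve singularity, constancy of the topology of the fibers implies that $s\mapsto \delta(\mathcal Y_s)$ is locally constant on $S$, and for families of reduced plane curves over a reduced base this forces the normalization to be flat over $S$. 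The normalization then provides a deformation of $\varphi$ which, by construction and by Theorem \ref{TEQUI}, preserves the Puiseux and contact data, i.e.\ is equisingular; by Theorem \ref{EQUIVALENCE} the original deformation of $f$ is equisingular as well.

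The main obstacle lies in the reverse direction, namely the simultaneous normalization step: passing from a pointwise statement about topological invariants to the global analytic statement that the normalization is flat over $S$. This is exactly where the hypothesis that $S$ be reduced is used in an essential way; on a non-reduced base a constancy statement at closed points would fail to capture the infinitesimal deformation, and counterexamples of the type mentioned in the introduction (where a naive definition of deformation produces unexpected behaviour) would re-appear.
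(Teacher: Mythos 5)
The paper offers no proof of this statement: it is quoted directly as Corollary 2.68 of \cite{GLS}, so there is nothing internal to compare your argument with, and your attempt has to be judged as a proof of the GLS result itself. On that footing there is a genuine gap, concentrated in the reverse direction. Your final step --- ``the normalization then provides a deformation of $\varphi$ which, by construction and by Theorem \ref{TEQUI}, preserves the Puiseux and contact data, i.e.\ is equisingular'' --- conflates the fiberwise numerical condition with the actual definition of an equisingular deformation of the parametrization (Definition 2.36 of \cite{GLS}, which the paper explicitly omits because it is long and technical). That definition does not say ``each fiber has the same Puiseux exponents and contact orders''; it requires compatible equimultiple sections through all infinitely near points, i.e.\ equimultiplicity along the whole induced family of blow-ups in the resolution process. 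Producing such sections from mere constancy of the topological type of the fibers over a reduced base is exactly the substance of Corollary 2.68 and of the theorems in \cite{GLS} leading up to it, so as written your argument assumes the hard part and is essentially circular at the decisive moment. A secondary issue: Teissier's $\delta$-constant criterion for simultaneous normalization is stated for a normal base, so over a base that is only reduced you must either pass to the normalization of $S$ and descend, or quote a version adapted to reduced bases; this needs to be addressed, not just asserted.

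The forward direction has a milder form of the same problem: you appeal to what the omitted definition ``is designed'' to do. The correct route is that an equisingular deformation of the parametrization is equimultiple along the induced blow-up families, hence over a reduced base the multiplicity sequences (equivalently, by Theorem \ref{TEQUI}, the Puiseux exponents and contact orders) of the fibers are constant; this is a consequence of the GLS machinery, not of the definition read off informally. So the overall strategy (pass to parametrizations via Theorem \ref{EQUIVALENCE}, encode topology via Theorem \ref{TEQUI}, use $\delta$-constancy and simultaneous normalization) is the standard and reasonable one, but as it stands the proposal does not close the loop between fiberwise topological constancy and the section-theoretic definition of equisingularity, which is the actual content of the cited corollary.
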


\begin{theorem}\label{LIFTEQUI}
Let $S\hookrightarrow \mathbb (\C^k,0)$ be an immersion of complex spaces. 
Let $\varphi$ be a parametrization of a reduced plane curve.
A deformation of $\varphi$ over $S$ is equisingular if and only it admits a lifting to an equisingular deformation of $\varphi$ over $(\mathbb C^k,0)$.
\end{theorem}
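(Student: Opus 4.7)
The implication ($\Leftarrow$) is immediate: if $\widetilde\Phi$ is an equisingular deformation of $\varphi$ over $(\C^k,0)$ with $i^\ast\widetilde\Phi\cong\Phi$, the notion of equisingular deformation of a parametrization (Definition 2.36 of \cite{GLS}) is preserved under arbitrary base change, so $\Phi\cong i^\ast\widetilde\Phi$ is equisingular over $S$.

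For the converse ($\Rightarrow$) the strategy is to invoke the existence of a semiuniversal equisingular deformation $\Psi$ of $\varphi$ over a germ $(T^{es},o)$, together with the classical unobstructedness result that $(T^{es},o)$ is smooth, say isomorphic to $(\C^N,0)$ (this is the content of the analogue of Corollary 2.66 in \cite{GLS} for equisingular deformations of a plane-curve parametrization). By the (semi)universal property there is a morphism $g:S\to (T^{es},o)$ such that $g^\ast\Psi\cong\Phi$. The morphism $g$ is encoded in an $N$-tuple $(g^\ast t_1,\dots,g^\ast t_N)\in\mathfrak m_S^N$; because $\mathcal O_S$ is the quotient of $\mathcal O_{(\C^k,0)}$ by the ideal of $S$, each $g^\ast t_j$ admits a lift $h_j\in\mathfrak m_{(\C^k,0)}$, and the $N$-tuple $(h_1,\dots,h_N)$ defines a morphism $\widetilde g:(\C^k,0)\to (T^{es},o)$ with $\widetilde g\circ i=g$. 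Setting $\widetilde\Phi:=\widetilde g^\ast\Psi$ gives an equisingular deformation of $\varphi$ over $(\C^k,0)$, and
\[
i^\ast\widetilde\Phi \;=\; i^\ast\widetilde g^\ast\Psi \;\cong\; (\widetilde g\circ i)^\ast\Psi \;=\; g^\ast\Psi \;\cong\; \Phi,
\]
so $\widetilde\Phi$ is the desired lifting.

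The main obstacle is the smoothness of the semiuniversal equisingular base $(T^{es},o)$: it is precisely this smoothness that permits extending $g:S\to (T^{es},o)$ across the closed immersion $i$ to the ambient smooth germ $(\C^k,0)$. Without that input one would have to confront the obstruction classes for lifting an equisingular deformation along a thickening, which is the core difficulty handled by the Zariski/Wahl type unobstructedness theorem. Once smoothness is granted, the remainder of the argument is formal, using only the versal property of $\Psi$, the surjectivity of $\mathcal O_{(\C^k,0)}\twoheadrightarrow\mathcal O_S$, and the functoriality of pullback of equisingular deformations recorded in the ($\Leftarrow$) step.
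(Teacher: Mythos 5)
Your proof is correct and follows essentially the same route as the paper, whose one-line proof cites Theorem 2.38 of \cite{GLS} precisely for the existence of a semiuniversal equisingular deformation of the parametrization with smooth base space, which you then combine with versality, the surjectivity of $\mathcal O_{(\mathbb C^k,0)}\twoheadrightarrow\mathcal O_S$, and base-change stability of equisingularity exactly as that citation intends. The only cosmetic point is that your construction gives $i^\ast\widetilde\Phi\cong\Phi$ rather than a lifting on the nose; this is remedied by lifting the trivializing isomorphism $(\chi,\xi)$ over $S$ to automorphisms over $(\mathbb C^k,0)$ and composing, which does not affect equisingularity.
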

\begin{proof} It follows from 
Theorem 2.38 of \cite{GLS}.
\end{proof}

\begin{proposition}\label{DECOMP}\emph{(Proposition 2.11 of \cite{GLS})}
Assume $f_1,...,f_\ell$ define germs of reduced irreducible curves of $(\mathbb C^2,0)$ and $F$ defines an equisingular deformation over a germ of complex space $S$ of the curve defined by $f_1\cdots f_\ell$. Then $F=F_1\cdots F_\ell$, where each $F_i$ defines an equisingular deformation of $f_i$ over $S$.
\end{proposition}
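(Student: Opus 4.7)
The plan is to translate the statement about deformations of the equation into a statement about deformations of the parametrization, where decomposition into branches is built into the setup, and then translate back.

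First, by Theorem \ref{EQUIVALENCE}, the equisingular deformation defined by $F$ of the equation $f=f_1\cdots f_\ell$ arises from an equisingular deformation $\Phi\colon\bar{\mathbb C}\times S\to\mathbb C^2\times S$ of some parametrization $\varphi$ of the curve $Y$ cut out by $f$. Since each $f_i$ is irreducible, the branches of $Y$ are exactly the curves $Y_i=\{f_i=0\}$, and $\bar{\mathbb C}$ decomposes as the disjoint union $\bigsqcup_{i=1}^\ell \bar C_i$, with $\varphi|_{\bar C_i}=\varphi_i$ a parametrization of $Y_i$. Correspondingly, $\Phi$ restricts to morphisms $\Phi_i\colon \bar C_i\times S\to\mathbb C^2\times S$, each of which is a deformation of $\varphi_i$ over $S$.

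Second, I would argue that each $\Phi_i$ is itself equisingular. By Theorem \ref{LIFTEQUI} it suffices to treat the case of a smooth base: lift $\Phi$ to an equisingular deformation of $\varphi$ over some $(\mathbb C^k,0)$, verify the claim there, and then pull back along the immersion $S\hookrightarrow(\mathbb C^k,0)$. For a smooth base, the (technical) definition of equisingular deformation of a parametrization in Definition 2.36 of \cite{GLS} is formulated component by component, together with additional data controlling the pairwise intersection behaviour of distinct branches; restricting to a single component $\bar C_i$ simply discards the cross-branch data while preserving the equisingularity datum on $\bar C_i$.

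Third, I apply Theorem \ref{EQUIVALENCE} once more, now to each branch $Y_i$: the equisingular deformation $\Phi_i$ of $\varphi_i$ induces an equisingular deformation of the equation $f_i$, defined by some $F_i\in\mathcal O_{\mathbb C^2\times S,0}$. By construction, the image of $\Phi_i$ is the hypersurface cut out by $F_i$, while the image of $\Phi$ is the hypersurface cut out by $F$. Since the image of $\Phi$ is set-theoretically the union of the images of the $\Phi_i$, and since the central fibres $f_1,\dots,f_\ell$ are pairwise coprime in $\mathcal O_{\mathbb C^2,0}$ (and this coprimality lifts to $\mathcal O_{\mathbb C^2\times S,0}$), one has the equality $(F)=(F_1)\cdots(F_\ell)$ of principal ideals, so that $F=u\,F_1\cdots F_\ell$ for some unit $u$; absorbing $u$ into $F_1$ finishes the proof.

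The main obstacle is the middle step: verifying that the restriction of an equisingular deformation of a parametrization to a single branch remains equisingular. This requires either unpacking Definition 2.36 of \cite{GLS} directly (which is workable because that definition is, morally, branch-indexed) or appealing to Theorem \ref{LIFTEQUI} to reduce to a smooth base, where the check is transparent. A secondary technical point is the lifting of pairwise coprimality of the central fibres $f_i$ to coprimality of the $F_i$ in the local ring of the total space, which follows by Nakayama-type reasoning once one sets up the local rings correctly.
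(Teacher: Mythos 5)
The paper itself gives no argument for Proposition~\ref{DECOMP}: it is quoted verbatim from Proposition~2.11 of \cite{GLS}, where it is proved directly at the level of the equation, from the definition of equisingularity via equimultiple sections through infinitely near points. Your route — transfer to the parametrization by Theorem~\ref{EQUIVALENCE}, decompose there, and push each branch back to an equation — is therefore a genuinely different (and much heavier) derivation; note that in \cite{GLS} Theorem~2.64 comes long after Proposition~2.11, so if you intend this as a proof of the GLS statement rather than a formal consequence of results quoted in this paper, you must check that the proof of Theorem~2.64 does not already use the branch decomposition, otherwise the argument is circular. Also, Theorem~\ref{EQUIVALENCE} only produces a $\Phi$ whose induced equation deformation is \emph{isomorphic} to the one defined by $F$, so the factorization has to be transported through that isomorphism; harmless, but it should be said.

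There are two genuine gaps. First, the crux — that the restriction $\Phi_i$ of an equisingular deformation of the parametrization to a single branch is again equisingular — is asserted, not proved; it is true, and follows by keeping, for the subtree of infinitely near points lying on $Y_i$, the same sections and equimultiplicity conditions required by Definition~2.36 of \cite{GLS}, but this check (which needs no reduction to a smooth base, so the appeal to Theorem~\ref{LIFTEQUI} buys nothing here) is exactly the content and is missing. Second, the justification you give for $(F)=(F_1)\cdots(F_\ell)$ is set-theoretic, and set-theoretic reasoning is empty precisely in the essential case of a non-reduced base germ $S$: over $T_\varepsilon$ the underlying set of the total space is that of the central fibre, and a flat deformation of a product need not factor at all (e.g.\ $xy+\varepsilon$ over $T_\varepsilon$), so the factorization cannot come from the support. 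What is actually needed is ideal-theoretic: $\ker\Phi^*=\bigcap_{i}\ker\Phi_i^*=\bigcap_i(F_i)$, then
\[
\bigcap_{i=1}^{\ell}(F_i)=(F_1\cdots F_\ell)
\]
because for $j\neq i$ the element $F_j$ is a nonzerodivisor modulo $(F_i)$ (flatness of $\mathcal O_{\mathbb C^2\times S}/(F_i)$ over $S$ together with the fact that $f_j$ is a nonzerodivisor modulo $(f_i)$ on the central fibre), and finally the resulting relation $F=uF_1\cdots F_\ell$ has $u$ a unit because its restriction to the central fibre is $1$. Your closing remark about ``Nakayama-type reasoning'' points in this direction, but as written the step that carries the whole weight of the proposition is replaced by an argument that proves nothing over fat points.
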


\section{Relative contact geometry}\label{S:RCG}
We usually identify a subset of $\mathbb P^{n-1}$ with a conic subset of $\C^n$. Given a manifold $M$ we will also identify a subset of the projective cotangent bundle $\mathbb P^\ast M$ with a conic subset of the cotangent bundle $T^\ast M$ (for the canonical $\C^\ast$-action of $T^\ast M$).

Let $q:X\to S$ be a morphism of complex spaces.
Let $p_i$, $i=1,2$ be the canonical projections from $X\times_SX$ to $X$.
Let $\Delta$ denote the diagonal of $X\to X\times_SX$ and the diagonal immersion $X\hookrightarrow X\times_SX$.
Let $I_\Delta$ be the defining ideal of the diagonal of $X\times_SX$.
We say that the coherent $\mathcal O_X$-module 
$\Omega^1_{X/S}=\Delta^*(I_\Delta/I_\Delta^2)$ is the sheaf of \em relative differential forms of $X\to S$ \em (see \cite{HA}).

Given a local section $f$ of $\mathcal O_X$ set $f_i=f\circ p_i$, $i=1,2$. Consider the morphism $d:\mathcal O_X\to \Omega^1_{X/S}$ given by
$$
f\mapsto f_1-f_2 \qquad \hbox{\rm mod } I^2_\Delta.
$$
Notice that, given an open set $U$ of $X$ and $f,g\in\mathcal O_X(U)$,
$\varphi\in q^{-1}\mathcal O_S$,
\begin{equation}\label{DERIVATIONCAUSE}
d(fg)=fdg+gdf, \qquad
\hbox{\rm and} \qquad
d(\varphi f)=\varphi df
\end{equation}
If $x_1,...,x_n\in\mathcal O_X(U)$ are such that
$\Omega^1_{X/S}|_U\xrightarrow{\sim}\oplus_{i=1}^n\mathcal O_{U}dx_i$,
we say that $(x_1,...,x_n)$ is a \em partial system of local coordinates on $U$ of \em $X\to S$.

Notice that $(x_1,...,x_n)$ is a partial system of local coordinates  of  $X\to S$ on $U$ if and only if 
$\Omega^n_{X/S}|_U=\mathcal O_Udx_1\wedge\cdots\wedge dx_n$.

If $(x^1,...,x^n)$ is a partial system of local coordinates on $U$ of  $X\to S$, $x^i_1-x^i_2$, $i=1,...,n$, generate $I_\Delta|_U$.
Given $f\in \mathcal O_X(U)$, there are $a_i\in\mathcal O_X(U)$ such that $df=\sum_{i=1}^na_idx^i$. We set
$$
\frac{\partial f}{\partial x_i}=a_i,\qquad i=1,...,n.
$$
When $M,S$ are manifolds, $X=M\times S$ and $q$ is the projection $M\times S\to S$ this definition of partial derivative coincides with the usual one because of (\ref{DERIVATIONCAUSE}).
When $S$ is a point, $\Omega^1_{X/S}$ equals the sheaf of differential forms $\Omega^1_{X}$.

If $\Omega^1_{X/S}$ is a locally free $\mathcal O_X$-module, we denote by $\pi=\pi_{X/S}:T^*(X/S)\to X$ the vector bundle with sheaf of sections $\Omega^1_{X/S}$. Whenever it is reasonable we will write $\pi$ instead of $\pi_{X/S}$. We denote by $\tau_{X/S}:T(X/S)\to X$ the dual vector bundle of $T^*(X/S)$. We say that $T(X/S)$ [$T^*(X/S)$] is the \em relative tangent bundle \em [\em cotangent bundle\/\em ] of $X\to S$.

Let $\varphi:X_1\to X_2$, $q_i:X_i\to S$ be morphisms of complex spaces such that $q_2\varphi=q_1$. Let $\Delta_i: X_i\to X_i\times_S X_i$ be the diagonal map, $i=1,2$. If we denote by $\varphi_S$ the canonical map from $X_1\times_S X_1$ to $X_2\times_S X_2$, $\varphi_S^*:I_{\Delta_2}\to I_{\Delta_1}$ induces a morphism 
$\varphi^*: \Omega^1_{X_2/S}\to \Omega^1_{X_1/S}$ that generalizes the
pullback of differential forms. Moreover, $\varphi^*$ induces a morphism of $\mathcal O_{X_1}$-modules
\begin{equation}\label{ROPHI1}
\widehat\rho_\varphi:\varphi^*\Omega^1_{X_2/S}
=\mathcal O_{X_1}\otimes _{\varphi^{-1}\mathcal O_{X_2}}
\varphi^{-1}\Omega^1_{X_2/S} \to
\Omega^1_{X_1/S}.
\end{equation}
If $\Omega^1_{X_i/S}$, $i=1,2$, and the kernel and cokernel of (\ref{ROPHI1}) are locally free, we have a morphism of vector bundles
\begin{equation}\label{ROPHI2}
\rho_\varphi: X_1\times_{X_2}T^*(X_2/S) \to T^*(X_1/S).
\end{equation}
If $\varphi$ is an inclusion map, we say that the kernel of (\ref{ROPHI2}), and its projectivization, are the \em conormal bundle of $X_1$ relative to $S$. \em
We will denote by $T^*_{X_1}(X_2/S)$ or $\mathbb P^*_{X_1}(X_2/S)$ the conormal bundle of $X_1$ relative to $S$.
We denote by
$$
\varpi_\varphi:T(X_1/S) \to X_1\times_{X_2} T(X_2/S)
$$
the dual morphism of $\rho_\varphi$. We say that $\varpi_\varphi$ is the \em relative tangent morphism of $\varphi$ over $S$. \em
These are straightforward generalizations of the constructions of \cite{KA}.

If $(x_1,...,x_n)$ is a partial system of local coordinates of $X\to S$ and $(y_1,...,y_m)$ is a system of local coordinates of a manifold $Y$, $(x_1,...,x_n,y_1,...,y_m)$ is a partial system of local coordinates of $X\times Y\to X\to S$. Hence $\Omega^1_{X/S}$ locally free implies $\Omega^1_{X\times Y/S}$ locally free.
Moreover, if $\Omega^1_{X/S}$ is locally free and $E\to X$ is a vector bundle, $\Omega^1_{E/S}$ is locally free.

Let $(x_1,...,x_n)$ be a partial system of local coordinates of $X\to S$ on an open set $U$ of $X$. Set $V=\pi_{X/S}^{-1}(U)$. There are $\xi_1,...,\xi_n\in\mathcal O_{T^*(X/S)}(V)$ such that, for each $\sigma\in V$, 
$$
\sigma=\textstyle{\sum_{i=1}^n}\xi_i(\sigma)dx_i.$$
Notice that $(x_1,...,x_n,\xi_1,...,\xi_n)$ is a partial system of local coordinates of $T^*(X/S)\to S$.
Let $o\in X$, $u\in T_\sigma T^*(X/S)$. Let
$$
\varpi_\pi (\sigma): T_\sigma (T^*(X/S)/S)\to T_o(X/S) 
$$ 
be the relative tangent morphism of $\pi$ over $S$ at $\sigma$.
There is one and only one $\theta\in\Omega^1_{T^*(X/S)/S}$ such that,
$$
\theta(\sigma)(u)=\sigma(\varpi_\pi(\sigma)(u)),
$$
for each $o\in X$, each $\sigma\in T^*_o(X/S)$ and each 
$u\in T_\sigma(T^*(X/S)/S)$. Given a partial system of local coordinates $(x_1,...,x_n)$ of $X\to S$ on an open set $U$,
$$
\theta|_{\pi^{-1}(U)}=\textstyle{\sum_{i=1}^n}\xi_idx_i.
$$
We say that $\theta_{X/S}=\theta$ is the \em canonical \em $1$-form of $T^*(X/S)$.

Notice that $(d\theta)(\sigma)$ is a symplectic form of 
$T_\sigma(T^*(X/S)/S)$, for each $\sigma\in T^*(X/S)$. 
We say that $(x_1,...,x_n,\xi_1,...,\xi_n)$ is a \em partial system of symplectic coordinates of \em $T^*(X/S)$ (associated to $(x_1,...,x_n)$).

Assume $M$ is a manifold.
When $q$ is the projection $M\times S\to S$ we will replace "$M\times S/S$" by $"M|S"$. Let $r$ be the projection $M\times S \to M$.
Notice that $\Omega^1_{M|S}\xrightarrow{\sim}\mathcal O_{M\times S}
\otimes_{r^{-1}\mathcal O_M}r^{-1}\Omega^1_M$
is a locally free $\mathcal O_{M\times S}$-module.
Moreover, $T^*(M|S)=T^*M\times_M(M\times S)$. If $\imath$ is the inclusion $T^*(M|S)\hookrightarrow T^*(M\times S)$, 
$\imath^*\theta_{M\times S}=\theta_{M|S}$.
A system of local coordinates of $M$ is a partial system of local coordinates of $M\times S\to S$.

We say that $\Omega^1_{M|S}$ is the \em sheaf of relative differential forms of $M$ over $S$. \em
We say that $T^*(M|S)$ is the \em relative cotangent  bundle of $M$ over $S$. \em

Let $N$ be a complex manifold of dimension $2n-1$. 
Let $S$ be a complex space.
We say that a section $\omega$ of $\Omega^1_{N|S}$ is a \em relative contact form of \em $N$ over $S$ if  $\omega\wedge d\omega^{n-1}$ is a local generator of 
$\Omega^{2n-1}_{N|S}$.
Let $\mathfrak C$ be a locally free subsheaf of $\Omega^1_{N|S}$. 
We say that $\mathfrak C$ is a \em structure of relative contact manifold on \em $N$ over $S$ if $\mathfrak C$ is locally generated by a relative contact form of $N$ over $S$.  We say that $(N\times S,\mathfrak C)$ is a \em relative contact manifold over $S$. \em
When $S$ is a point we obtain the usual notion of contact manifold.

Let $(N_1\times S,\mathfrak C_1)$, $(N_2\times S,\mathfrak C_2)$ be relative contact manifolds over $S$. 
Let $\chi$ be a morphism from $N_1\times S$ into $N_2\times S$ such that 
$q_{N_2}\circ \chi =q_{N_1}$. 
We say that $\chi$ is a \em relative contact transformation \em of $(N_1\times S,\mathfrak C_1)$ into $(N_2\times S,\mathfrak C_2)$ if the pull-back by $\chi$ of each local generator of $\mathfrak C_2$ is a local generator of $\mathfrak C_1$.

We say that the projectivization $\pi_{X/S}:\mathbb P^*(X/S)\to X$ of the vector bundle 
$T^*(X/S)$ is the \em projective cotangent bundle \em of $X \to S$.

Let $(x_1,...,x_n)$ be a partial system of local coordinates on an open set $U$ of $X$. Let $(x_1,...,x_n,\xi_1,...,\xi_n)$ be the associated partial system of symplectic coordinates of $T^*(X/S)$ on $V=\pi^{-1}(U)$.
Set $p_{i,j}=\xi_i\xi_j^{-1}$, $i\not=j$,
$$
V_i=\{ (x,\xi)\in ~V: ~\xi_i\not =0  \}, 
\qquad \omega_i=\xi_i^{-1}\theta,
\qquad i=1,...,n.
$$
each $\omega_i$ defines a relative contact form $dx_j-\sum_{i\not=j}p_{i,j}dx_i$ on $\mathbb P^*(X/S)$, endowing  $\mathbb P^*(X/S)$ with a structure of relative contact manifold over $S$.

Let $\omega$ be a germ at $(x,o)$ of a relative contact form of $\mathfrak C$.
A lifting $\widetilde\omega$ of $\omega$ defines a germ 
$\widetilde{\mathfrak C}$ of a relative contact structure of 
$N\times T_oS \to T_oS$. Moreover, $\widetilde{\mathfrak C}$ is a lifting of 
the germ at $o$ of ${\mathfrak C}$.

Let $(N\times S,\mathfrak C)$ be a relative contact manifold over a complex manifold $S$.  
Assume $N$ has dimension $2n-1$ and $S$ has dimension $\ell$.
Let $\mathcal L$ be a reduced analytic set of $N\times S$ of pure dimension $n+\ell-1$.
We say that $\mathcal L$ is a \em relative Legendrian variety \em of $N\times S$ over $S$ if for each section 
$\omega$ of $\mathfrak C$, $\omega$ vanishes on the regular part of $\mathcal L$. When $S$ is a point, we say that $\mathcal L$ is a \em  Legendrian variety \em of $N$.

Let $\mathcal L$ be an analytic set of $N\times S$. Let $(x,o) \in \mathcal L$. Assume $S$ is an irreducible germ of a complex space at $o$.
We say that  $\mathcal L$ is a \em relative Legendrian variety of $N$ over $S$  at \em $(x,o)$  if there is a relative Legendrian variety 
$\widetilde{\mathcal L}$ of $(N,x)$ over $(T_oS,0)$  that is a lifting of 
the germ of $\mathcal L$ at $(x,o)$. Assume $S$ is a germ of a complex space at $o$ with irreducible components $S_i, i\in I$. We say that  $\mathcal L$ is a \em relative Legendrian variety of $N$ over $S$  at \em $(x,o)$  if $S_i \times_S \mathcal L$ is a relative Legendrian variety of $S_i\times_S N$ over $S_i$ at $(x,o)$, for each $i \in I$.

We say that  $\mathcal L$ is a \em relative Legendrian variety 
 of $N\times S$ \em if $\mathcal L$ is a relative Legendrian variety of $N\times S$ at $(x,o)$ for each $(x,o)\in \mathcal L$.

The main problem of defining relative Legendrian variety over a complex space $S$ comes from the fact that $S$ does not have to be pure dimensional, hence we cannot assign a pure dimension to the Legendrian variety.

\begin{lemma}
Let $\chi$ be a relative contact transformation from $(N_1\times S, \mathfrak C_1)$ into $(N_2\times S, \mathfrak C_2)$.
Let $\mathcal L_1$ be a relative Legendrian curve of $(N_1\times S, \mathfrak C_1)$.
If $\mathcal L_2$ is the analytic subset of $N_2\times S$ defined by the pull back by $\chi^{-1}$ of the defining ideal of $\mathcal L_1$, 
$\mathcal L_2$ is a relative Legendrian variety of $(N_2\times S, \mathfrak C_2)$.
\end{lemma}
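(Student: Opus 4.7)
The plan is to transport the relative Legendrian condition through $\chi$ in three stages: reduce to a pointwise statement, lift everything to $T_oS$, and then perform a one-line pullback computation on the relative contact form.

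First I would note that the implicit use of $\chi^{-1}$ forces $\chi$ to be an isomorphism; combined with $q_{N_2}\circ\chi=q_{N_1}$, this makes $\chi$ an isomorphism over $S$. In particular it restricts to an isomorphism of reduced analytic sets $\mathcal L_1\xrightarrow{\sim}\mathcal L_2$ that preserves pure fibrewise dimension and carries the regular part of $\mathcal L_1$ onto the regular part of $\mathcal L_2$. Because the definition of relative Legendrian variety is local in $\mathcal L$ and decomposes over the irreducible components of $S$, I fix a point $(x_2,o)\in\mathcal L_2$, set $x_1=\chi_o^{-1}(x_2)$, and reduce to the case where $S$ is an irreducible germ at $o$.

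Next I would lift the whole picture to $T_oS$. Using $\imath_S:(S,o)\hookrightarrow(T_oS,0)$ and the product structures $N_i\times S\hookrightarrow N_i\times T_oS$, I construct a morphism $\widetilde\chi:(N_1\times T_oS,(x_1,0))\to(N_2\times T_oS,(x_2,0))$ over $T_oS$ whose pullback by $\imath$ is $\chi$; since $\chi$ is an isomorphism, a Nakayama argument on the Jacobian shows $\widetilde\chi$ is too. I lift a local generator $\omega_2$ of $\mathfrak C_2$ at $(x_2,o)$ to a relative $1$-form $\widetilde\omega_2$ on $N_2\times T_oS$; the non-vanishing of $\omega_2\wedge d\omega_2^{n-1}$ lifts to non-vanishing of $\widetilde\omega_2\wedge d\widetilde\omega_2^{n-1}$ by Nakayama, so $\widetilde\omega_2$ generates a relative contact structure $\widetilde{\mathfrak C}_2$ lifting $\mathfrak C_2$. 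Defining $\widetilde{\mathfrak C}_1$ as the line bundle generated by $\widetilde\chi^*\widetilde\omega_2$ gives a relative contact structure on $N_1\times T_oS$ that lifts $\mathfrak C_1$ (its pullback by $\imath$ is $\chi^*\omega_2=f\omega_1$ with $f$ a unit, hence still a generator of $\mathfrak C_1$), and by construction $\widetilde\chi$ is a relative contact transformation.

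Finally, the hypothesis that $\mathcal L_1$ is relative Legendrian supplies a Legendrian lifting $\widetilde{\mathcal L}_1$ of the germ of $\mathcal L_1$ at $(x_1,o)$, which one may take with respect to the lifting $\widetilde{\mathfrak C}_1$ just chosen. Set $\widetilde{\mathcal L}_2=\widetilde\chi(\widetilde{\mathcal L}_1)$, i.e.\ the analytic subset of $N_2\times T_oS$ defined by the pullback of $I_{\widetilde{\mathcal L}_1}$ by $\widetilde\chi^{-1}$. It is reduced, of the correct pure dimension, and its pullback by $\imath$ is the germ of $\mathcal L_2$ at $(x_2,o)$. For any section $\omega$ of $\widetilde{\mathfrak C}_2$, $\widetilde\chi^*\omega$ is a section of $\widetilde{\mathfrak C}_1$ and therefore vanishes on the regular part of $\widetilde{\mathcal L}_1$; since $\widetilde\chi$ is an isomorphism that maps regular part to regular part, $\omega$ vanishes on the regular part of $\widetilde{\mathcal L}_2$, proving the Legendrian condition at $(x_2,o)$ and hence everywhere on $\mathcal L_2$. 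The main obstacle is the joint lifting in the middle step: producing $\widetilde\chi$, $\widetilde{\mathfrak C}_1$, $\widetilde{\mathfrak C}_2$, and the Legendrian lifting $\widetilde{\mathcal L}_1$ so that they are mutually compatible; once that is arranged the pullback computation is immediate from the definition of contact transformation.
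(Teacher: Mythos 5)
Your proposal is correct and follows essentially the same route as the paper's own proof: lift $\chi$ to a germ $\widetilde\chi$ over $T_oS$, transport a lifting $\widetilde{\mathfrak C}_2$ of $\mathfrak C_2$ back to a lifting $\widetilde{\mathfrak C}_1=\widetilde\chi^*\widetilde{\mathfrak C}_2$ of $\mathfrak C_1$, and push a Legendrian lifting $\widetilde{\mathcal L}_1$ forward to obtain a Legendrian lifting of $\mathcal L_2$. Your added Nakayama-style checks and the remark about compatibility of the liftings only flesh out steps the paper leaves implicit.
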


\begin{proof}
Let $\chi:(N_1\times S,\mathfrak C_1)\to(N_2\times S,\mathfrak C_2)$
be a relative contact transformation over $S$.
Let $(x_1,o)$ be a point of $N_1\times S$. Set $(x_2,o)=\chi(x_1,o)$.
There is a morphism of germs of complex spaces
$$
\widetilde\chi:(N_1\times T_oS,(x_1,o))
\to
(N_2\times T_oS,{(x_2,o)})
$$
such that $\widetilde\chi\circ\imath_{N_1}=\imath_{N_2}\circ\chi$.
We say that such a morphism is a \em lifting \em of $\chi$.
Let $\widetilde{\mathfrak{C}}_2$ be a lifting of ${\mathfrak{C}}_2$ at $(x_2,o)$.
Then $\widetilde{\mathfrak{C}}_1=\widetilde\chi^*\widetilde{\mathfrak{C}}_2$ is a lifting of $\mathfrak C_1$ at $(x_1,o)$. Moreover, $\widetilde\chi$ is a germ of a relative contact transformation.

Let $\mathcal L_1$ be a germ of a relative Legendrian variety at $(x_1,o)$.
There is a lifting $\widetilde{\mathcal L}_1$ of $\mathcal L_1$ that is a germ of relative Legendrian variety of $N_1\times T_oS$.
Hence $\widetilde\chi(\widetilde{\mathcal L}_1)$ is a germ of a relative Legendrian variety of $N_2\times T_oS$ and 
$\widetilde\chi(\widetilde{\mathcal L}_1)$ is a lifting of $\mathcal L_2$ at $(x_2,o)$.
\end{proof}

Let $Y$ be a reduced analytic set of $M$.
Let $\mathcal Y$ be a flat deformation of $Y$ over $S$.
Set $X=M\times S\setminus \mathcal Y_{\rm sing}$.
We say that the Zariski closure of $\mathbb P^*_{\mathcal Y_{\rm reg}}(X/S)$ in $\mathbb P^*(M|S)$ is the \em conormal
$\mathbb P^*_{\mathcal Y}(M|S)$ of $\mathcal Y$ over $S$. \em

\begin{theorem}
The conormal of $\mathcal Y$ over $S$ is a relative Legendrian variety of $\mathbb P^*(M|S)$.
If $\mathcal Y$ has irreducible components $\mathcal Y_1,...,\mathcal Y_r$,
\begin{equation}\label{DECOMPOSITION}
\mathbb P^*_\mathcal{Y}(M|S)=
\cup_{i=1}^r
\mathbb P^*_{\mathcal{Y}_i}(M|S).
\end{equation}
\end{theorem}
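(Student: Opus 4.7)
The plan is to prove the Legendrian condition and the dimension condition on the regular locus of $\mathcal Y$ over a smooth base, then use closure and the lifting definition to extend to the closure and to general complex spaces $S$. The decomposition statement will follow from a corresponding decomposition of $\mathcal Y_{\rm reg}$.

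First I would reduce to the case where $S$ is smooth, say of dimension $\ell$. At a point $(x_0,s_0)$ of $\mathcal Y_{\rm reg}$, flatness of $\mathcal Y\to S$ together with smoothness of $\mathcal Y_{\rm reg}\to S$ produces local coordinates $(x_1,\ldots,x_n)$ of $M$ such that $\mathcal Y$ is cut out by $x_{d+1}=\cdots=x_n=0$, where $d=\dim Y$. In the associated partial symplectic coordinates $(x_1,\ldots,x_n,\xi_1,\ldots,\xi_n)$ on $T^*(M|S)$, the relative conormal is defined by
\[
x_{d+1}=\cdots=x_n=0,\qquad \xi_1=\cdots=\xi_d=0,
\]
so it is smooth of dimension $d+\ell+(n-d)=n+\ell$; projectivizing gives the required pure dimension $n+\ell-1$. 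Moreover, on this set the canonical $1$-form $\theta=\sum_i\xi_i\,dx_i$ reduces to $\sum_{i=d+1}^n\xi_i\,dx_i$, and both $dx_{d+1},\ldots,dx_n$ vanish along $\mathbb P^*_{\mathcal Y_{\rm reg}}(X/S)$. Hence every local generator of $\mathfrak C$ (which is $\xi_i^{-1}\theta$ on the chart $\xi_i\neq 0$) vanishes along the regular part, establishing the Legendrian condition there.

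Next I would pass to the Zariski closure. Since $\mathbb P^*_{\mathcal Y_{\rm reg}}(X/S)$ is open and dense in $\mathbb P^*_{\mathcal Y}(M|S)$ and has pure dimension $n+\ell-1$, the closure is again analytic of the same pure dimension. Any relative contact form vanishes on the regular part of the closure by continuity from a dense subset. This proves the theorem when $S$ is smooth. For general $S$, I would invoke the lifting definition: for each point $(x,o)\in \mathcal Y$ and each irreducible component $S_i$ of $(S,o)$, flatness provides a lifting of $\mathcal Y|_{S_i}$ to a flat deformation $\widetilde{\mathcal Y}_i$ over the smooth germ $(T_oS_i,0)$ via the embedding $S_i\hookrightarrow T_oS_i$. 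The conormal construction commutes with this lifting on the dense regular locus, so $\mathbb P^*_{\widetilde{\mathcal Y}_i}(M|T_oS_i)$ is a relative Legendrian variety by the smooth case and is a lifting of $\mathbb P^*_{\mathcal Y|_{S_i}}(M|S_i)$. This is exactly the condition in the paper's definition of relative Legendrian variety over a (possibly non-smooth, non-irreducible) complex space.

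For the decomposition (\ref{DECOMPOSITION}), I would note that on the Zariski open set
\[
U=M\times S\setminus\Bigl(\bigcup_{i\neq j}(\mathcal Y_i\cap \mathcal Y_j)\ \cup\ \bigcup_i(\mathcal Y_i)_{\rm sing}\Bigr),
\]
$\mathcal Y_{\rm reg}\cap U$ decomposes as the disjoint union of the $(\mathcal Y_i)_{\rm reg}\cap U$, so
\[
\mathbb P^*_{\mathcal Y_{\rm reg}\cap U}(U/S)=\bigsqcup_{i=1}^r\mathbb P^*_{(\mathcal Y_i)_{\rm reg}\cap U}(U/S).
\]
Taking Zariski closures in $\mathbb P^*(M|S)$ yields the decomposition, since the closure of a finite union is the union of the closures. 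The main obstacle is the passage from smooth $S$ to a general, possibly non-reduced or non-pure-dimensional $S$: one must verify that the lifting of $\mathcal Y$ to $T_oS_i$ really does lift the conormal, which relies on flatness of $\mathcal Y\to S$ to guarantee that the relative conormal is determined by its restriction to the regular locus and is compatible with base change there.
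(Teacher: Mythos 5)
Your proposal is correct and follows essentially the same route as the paper: a local computation of the relative conormal at regular points in a partial coordinate system adapted to $\mathcal Y$ via flatness, vanishing of the canonical $1$-form there, passage to the Zariski closure, and the decomposition obtained from the density of the $(\mathcal Y_i)_{\rm reg}$ in the components. The only caveat is that your phrase ``flatness provides a lifting of $\mathcal Y|_{S_i}$ to a flat deformation over $(T_oS_i,0)$'' overstates what is available (flat deformations need not lift in general); what is actually needed, and what the local description delivers, is merely a lifting of the germ of the conormal obtained by lifting the local defining functions $F_j$, whose flatness over $T_oS_i$ is irrelevant to checking the Legendrian condition over the smooth base.
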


\begin{proof} We have a commutative diagram
\[
  \begin{tikzcd}
& \mathcal{Y}_{\rm reg}  \arrow[r,hook,"i"] \arrow[d, "\Delta_{\mathcal Y_{\rm reg}}"]
& X \arrow[d, "\Delta_{X}"] \\
& \mathcal{Y}_{\rm reg}\times_S\mathcal{Y}_{\rm reg} \arrow[r, hook,"j"]
& X\times_S X \\  
  \end{tikzcd}
\]
Since $I_{\Delta_{\mathcal Y_{\rm reg}}}=
j^{\ast}\left( (I_{\Delta_X}+I_{ \mathcal Y_{\rm reg}\times_S \mathcal Y_{\rm reg} })/
I_{ \mathcal Y_{\rm reg}\times_S \mathcal Y_{\rm reg} }\right)$,
\begin{equation}\label{DIAGONALI}
\Delta_{\mathcal Y_{\rm reg}}^{\ast}
(I_{\Delta_{\mathcal Y_{\rm reg}}}
/I^2_{\Delta_{\mathcal Y_{\rm reg}}})
~
\xrightarrow{\sim}
~
i^{\ast}\Delta_X^{\ast}(
(I_{\Delta_X}+
I_{\mathcal Y_{\rm reg}\times_S\mathcal Y_{\rm reg} })/(I^2_{\Delta_X}+I_{\mathcal Y_{\rm reg}\times_S
\mathcal Y_{\rm reg}})).
\end{equation}
Let $(x,o)\in Y_{\rm reg}$. Let $\widetilde{\mathfrak m}$ 
be the ideal of $\mathcal O_{M\times S,(x,o)}$
generated by $\mathfrak{m}_o$.
Let $(y_1,...,y_n)$ be a system of local coordinates of 
$(M,x)$ such that $I_{Y,x}=(y_{k+1},...,y_n)$.
There are $F_j\in \mathcal O_{M\times S, (x,o)}$, $j=k+1,...,n$
such that $I_{\mathcal Y,(x,o)}=(F_{k+1},...,F_n)$ and 
$F_j-y_j\in \widetilde{\mathfrak m}$, $j=k+1,...,n$.
Set
$$
x^i=y_i, \qquad i=1,...,k, \qquad   x^i=F_i,  \qquad i=k+1,...,n.
$$
Notice that $(x^1,...,x^n)$ is a partial system of local coordinates of $X\to S$. Since near $(x,o)$
$$
I_{\Delta_X}=(x^1_1-x^1_2,...,x^n_1-x^n_2)
~~~\hbox{ \rm and } ~~~
I_{{\mathcal Y\times_s\mathcal Y}}
=(x_1^{k+1},..., x_1^n,x_2^{k+1},...,x^n_2),
$$
it follows from (\ref{DIAGONALI}) that
$dx^1,...,dx^k$ is a local basis of $\Omega^1_{\mathcal Y/S}$,
$dx^1,...,dx^n$ is a local basis of $\Omega^1_{M|S}$,
$$
\widehat\rho_i(dx^j)=dx^j,
\quad
j=1,...,k,
\qquad
\hbox{\rm and}
\qquad
\widehat\rho_i(dx^j)=0,
\quad
j=k+1,...,n.
$$
Hence the kernel of $\widehat\rho_i$ at $(x,o)$ equals 
$\oplus_{j=k+1}^n\mathbb C\{x^1,...,x^k  \}dx^j$.
Given the partial system of symplectic coordinates
$(x^1,...,x^n, \xi^1,...,\xi^n)$,
the ideal of the kernel of
$$
\rho_i:\mathcal Y_{\rm reg}\times_XT^*(X/S)\to T^*(\mathcal Y_{\rm reg}/S)
$$
is generated by $x^{k+1},...,x^n,\xi^1,...,\xi^k$.

It is enough to prove the second statement when $S$ is smooth.
Its proof relies on the fact that each connected component of 
$\mathcal Y$ is dense in one of the irreducible components of $\mathcal Y$.
\end{proof}

Let $q:X\to S$ be a morphism of complex spaces.
Let $y\in Y\subset X$.
We say that $Y$ is a \em submanifold of $X\to S$ at $y$ \em if there is a partial system of local coordinates $(x_1,...,x_n)$ of $X\to S$ near $y$ and $1\le k\le n$ such that $Y=\{x_1=\cdots=x_k=0\}$ near $y$.
We say that $Y$ is a \em submanifold \em of $X\to S$ if $Y$ is a  submanifold of $X\to S$ at $y$ for each $y\in Y$. 

Notice that a submanifold of $X\to S$ is not necessarily a manifold, although it behaves like one in several ways.

Let $Y\subset X$. 
Let 
$\gamma:\Delta_\varepsilon=\{t\in \mathbb C: |t|<\varepsilon \}\to Y$
be a holomorphic curve such that $\gamma(0)=y$.
We associate to $\gamma$ a tangent vector $u$ of $Y$ at $y$ setting
$u\cdot f=(f\circ \gamma)'(0))$, for each $f\in\mathcal O_{X,y}$.
We associate to $\gamma$ an element $u$ of $T_y(X/S)$ 
setting
\begin{equation}\label{RELATIVETANGENT}
u\cdot f=df(y)(\gamma'(0)), \qquad f\in\mathcal O_{X,y}.
\end{equation}
If $Y$ is a submanifold of $X\to S$ the set of relative vector fields (\ref{RELATIVETANGENT}) define a linear subspace $T_y(Y/S)$ of 
$T_y(X/S)$.

Let us fix a point $o$ of $S$.
Consider the canonical maps
\[
T^*M \xrightarrow{i} T^*(M|S)=(T^*M)\times S \xrightarrow{r} T^*M.
\]
Since $T_\sigma(T^*(M|S)/S)=T_{r(\sigma)}T^*M$ and
\[
(d\theta_{M|S})(\sigma)=(i^*d\theta_M)(r(\sigma)),
\]
$(d\theta_{M|S})(\sigma)$ is a symplectic form of $T_\sigma(T^*(M|S)/S)$.

The Poisson bracket of $(T^*M)$ induces a Poisson bracket in $T^*(M|S)$. Let $f \in \mathcal O_{T^*(M|S)}$. Setting $f_s(x,\xi)=f(x,\xi,s)$
\[
\{f,g\}_{T^*(M|S)}(x,\xi,s)=\{f_s,g_s\}_{T^\ast M}(x,\xi).
\]
Let $W$ be a submanifold of $T^\ast(M|S)$. Setting $W_s=\{(x,\xi) \in T^\ast M: (x,\xi,s) \in W\}$, $W$ is an involutive submanifold of $T^\ast(M|S)$ if and only if $W_s$ is an involutive submanifold of $T^\ast M$ for each $s \in S$. It is well known that $W_s$ is an involutive submanifold of $T^\ast M$ if and only if $T_\sigma W_s$ is an involutive linear subspace of $T_\sigma T^\ast M$ for each $\sigma \in W_s$

\begin{lemma}\label{L:lagrangian}
Let $\mathcal L$ be a conic submanifold of  $T^\ast(M|S)$. The manifold $\mathcal L$ is a Legendrian submanifold of $\mathbb P^\ast(M|S)$ if and only if $T_\sigma(\mathcal L/S)$ is a linear Lagrangian subspace of $T_\sigma(T^\ast(M|S)/S)$ for each $\sigma \in \mathcal L$.
\end{lemma}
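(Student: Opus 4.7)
The plan is to reduce the statement to the classical absolute fact that a conic submanifold $\Lambda$ of $T^\ast M$ projects to a Legendrian submanifold of $\mathbb P^\ast M$ if and only if $T_\sigma \Lambda$ is a Lagrangian subspace of $T_\sigma T^\ast M$ for every $\sigma \in \Lambda$, and then to transfer this fiberwise via the identification $T^\ast(M|S) = T^\ast M \times S$. Both conditions in the lemma are pointwise in $\sigma$, so no information is lost by fixing $\sigma = (x,\xi,s)$ and working on the slice $\mathcal L_s = \{(x',\xi') : (x',\xi',s) \in \mathcal L\}$ through it.

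The paragraph preceding the statement already supplies the two identifications that drive the argument: $T_\sigma(T^\ast(M|S)/S) = T_{r(\sigma)} T^\ast M$, under which $(d\theta_{M|S})(\sigma)$ corresponds to $(d\theta_M)(r(\sigma))$, and the fiberwise Poisson bracket $\{f,g\}_{T^\ast(M|S)}(\cdot,s) = \{f_s,g_s\}_{T^\ast M}$. Under the first identification, the subspace $T_\sigma(\mathcal L/S)$, which by (\ref{RELATIVETANGENT}) is spanned by arcs $\gamma\colon \Delta_\varepsilon \to \mathcal L$ whose projection to $S$ is constant (hence valued in $\mathcal L_s$), corresponds exactly to $T_{r(\sigma)} \mathcal L_s$. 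Hence the Lagrangian condition in the lemma is the fiberwise Lagrangian condition on $\mathcal L_s$. On the other hand, $\mathcal L$ is Legendrian in $\mathbb P^\ast(M|S)$ iff each local generator $\omega_j = \xi_j^{-1}\theta_{M|S}$ of $\mathfrak C$ vanishes on the regular part of $\mathcal L$; restricting to the fiber $\{s = s_0\}$ this is exactly the condition that $\mathcal L_{s_0}$ is a conic Legendrian of $T^\ast M$, so the Legendrian condition on $\mathcal L$ is the conjunction of these conditions as $s_0$ varies.

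With these translations in place, the lemma follows from the absolute case, whose short proof I would include for completeness: on a conic submanifold $\Lambda$ of $T^\ast M$, the Euler vector field $E$ (infinitesimal generator of the $\mathbb C^\ast$-action) is tangent to $\Lambda$, and the canonical form satisfies $\theta_M = \iota_E d\theta_M$; consequently $\theta_M$ vanishes on $T\Lambda$ iff $d\theta_M(E,\cdot)$ vanishes on $T\Lambda$, and combined with the equality of dimensions $\dim \Lambda = \dim M$ this is precisely the Lagrangian condition for $T_\sigma\Lambda \subset T_\sigma T^\ast M$. The main obstacle I anticipate is the bookkeeping of dimensions under the relative-to-absolute passage: a Legendrian of $\mathbb P^\ast(M|S)$ has pure relative dimension $n-1$ over $S$, lifting to a conic submanifold of $T^\ast(M|S)$ whose slices have dimension $n$, which matches the Lagrangian dimension in $T^\ast M$. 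Verifying that ``submanifold of $T^\ast(M|S) \to S$'' in the paper's sense (defined via a partial system of local coordinates of $T^\ast(M|S) \to S$) does cut out honest smooth submanifolds of each slice of the correct dimension is the only nontrivial check beyond the routine differential-geometric manipulations above.
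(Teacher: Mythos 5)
Your proposal is correct and follows essentially the same route as the paper: both reduce the lemma to the absolute, fiberwise statement through the identifications $T_\sigma(T^\ast(M|S)/S)=T_{r(\sigma)}T^\ast M$ and $(d\theta_{M|S})(\sigma)=(i^*d\theta_M)(r(\sigma))$ established immediately before the statement, and then conclude by the classical correspondence between conic Lagrangian submanifolds of $T^\ast M$ and Legendrian submanifolds of $\mathbb P^\ast M$ together with a dimension count. The only difference is one of presentation: where the paper invokes the involutivity transfer for the fiberwise Poisson bracket and ``an argument of dimension,'' you make the classical step explicit via the Euler field identity $\theta_M=\iota_E\, d\theta_M$, which is a sound and more self-contained way of supplying the same ingredient.
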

\begin{proof}
The submanifold $W$ considered above is an involutive submanifold of $T^\ast(M|S)$ if and only if $T_\sigma (W/S)$ is a linear involutive subspace of $T_\sigma(T^\ast(M|S)/S)$ for each $\sigma \in W$. The result follows from an argument of dimension.
\end{proof}

\begin{theorem}\label{T:RCONORMAL}
Let $\mathcal L$ be an irreducible germ of a relative Legendrian analytic set of $\mathbb P^*(M|S)$. 
If the analytic set $\pi(\mathcal L)$ is a flat deformation over $S$ of an analytic set of $M$, $\mathcal L=\mathbb P^*_{\pi(\mathcal L)}(M|S)$.
\end{theorem}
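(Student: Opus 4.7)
The plan is to imitate the classical proof that an irreducible Legendrian variety is the conormal of its projection, transported to the relative setting via the lifting mechanism introduced for relative Legendrian varieties and via the fibrewise Lagrangian criterion supplied by Lemma \ref{L:lagrangian}. The first step is to reduce to the case when $S$ itself is smooth. By the lemma preceding this theorem there is a lifting $\widetilde{\mathcal L}$ of $\mathcal L$ to $\mathbb P^*(M|T_oS)$ which is again an irreducible relative Legendrian analytic set, and flatness of $\pi(\mathcal L)$ over $S$ provides a lifting of $\pi(\mathcal L)$ to a flat deformation $\pi(\widetilde{\mathcal L})$ over $T_oS$. Once the statement is proved on $(T_oS,0)$, pulling back along $\imath\colon S\hookrightarrow T_oS$ gives $\mathcal L=\mathbb P^*_{\pi(\mathcal L)}(M|S)$.

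Assume henceforth that $S$ is smooth. Set $\mathcal Y=\pi(\mathcal L)$, $X=M\times S\setminus\mathcal Y_{\rm sing}$ and $\mathcal L^\circ=\mathcal L\cap\pi^{-1}(\mathcal Y_{\rm reg})$. I would first verify that $\mathcal L^\circ$ is Zariski dense in $\mathcal L$, using flatness of $\mathcal Y$ over $S$ (which forces $\mathcal Y_{\rm sing}$ to have codimension at least one in each fibre and hence in $\mathcal Y$) together with the purity of dimension of $\mathcal L$ inherent in the definition of a relative Legendrian variety. A dimension count then shows $\mathcal L^\circ$ and $\mathbb P^*_{\mathcal Y_{\rm reg}}(X/S)$ have the same dimension, both being pure of dimension $\dim M-1+\dim S$.

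The heart of the argument is the inclusion $\mathcal L^\circ\subseteq\mathbb P^*_{\mathcal Y_{\rm reg}}(X/S)$. Fix a point of $\mathcal L^\circ$ lying over $(x,o)\in\mathcal Y_{\rm reg}$. In the partial symplectic coordinates $(x^1,\dots,x^n,\xi^1,\dots,\xi^n)$ adapted to the submanifold structure of $\mathcal Y_{\rm reg}$ over $S$ (as constructed in the proof of the previous theorem), the defining ideal of $\mathbb P^*_{\mathcal Y_{\rm reg}}(X/S)$ is $(x^{k+1},\dots,x^n,\xi^1,\dots,\xi^k)$. The relations $x^{k+1}=\cdots=x^n=0$ hold on $\mathcal L^\circ$ by construction, since $\pi(\mathcal L^\circ)\subseteq\mathcal Y_{\rm reg}$. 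For the vanishing of $\xi^1,\dots,\xi^k$, at a smooth point $\sigma$ of $\mathcal L^\circ$ I apply Lemma \ref{L:lagrangian}: $T_\sigma(\mathcal L^\circ/S)$ is a linear Lagrangian subspace of $T_\sigma(T^*(M|S)/S)$, and the image of $T_\sigma(\mathcal L^\circ/S)$ under the relative differential of $\pi$ lies inside $T_x(\mathcal Y_{\rm reg}/S)$. Pairing Lagrangianness against the symplectic form $\sum d\xi^i\wedge dx^i$ and using that $dx^{k+1},\dots,dx^n$ already vanish on the image, one forces $\xi^1=\cdots=\xi^k=0$ on $\mathcal L^\circ$. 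This gives the inclusion, and the equality $\mathcal L^\circ=\mathbb P^*_{\mathcal Y_{\rm reg}}(X/S)$ follows from irreducibility and the matched dimensions. Taking Zariski closure in $\mathbb P^*(M|S)$ concludes.

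The step I expect to be the main obstacle is the purely local symplectic computation which forces $\xi^1,\dots,\xi^k$ to vanish on $\mathcal L^\circ$: one must translate the vanishing of the canonical relative contact $1$-form on the regular part of $\mathcal L$ into the Lagrangian condition in the correct partial system of symplectic coordinates, and carefully check that the kernel of $\rho_i$ identified in the previous theorem is exactly what pairs with $T(\mathcal L^\circ/S)$ via the relative symplectic form. The lifting reduction and the density of $\mathcal L^\circ$ in $\mathcal L$ are comparatively routine once flatness of $\pi(\mathcal L)$ is in force.
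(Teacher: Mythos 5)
Your proposal is correct and follows essentially the same route as the paper: adapted partial symplectic coordinates at a regular point of $\pi(\mathcal L)$, the Lagrangian criterion of Lemma \ref{L:lagrangian} to identify $\mathcal L$ with the conormal over $\pi(\mathcal L)_{\rm reg}$, and then closure, irreducibility and equality of dimensions to conclude. The symplectic step you flag as the main obstacle is exactly the one the paper compresses into ``repeating the argument of Lemma \ref{L:lagrangian},'' so your expanded version of it is consistent with the intended argument.
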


\begin{proof}
There is $s \in S$ such that $Y\times\{s\} \subset \mathcal Y$. Let $o$ be a smooth point of $Y$. There is an open set $U$ of $Y$ and a system of local coordinates $(y_1,\ldots,y_n)$ on $U$ such that $Y\cap U=\{y_1=\cdots=y_k=0\}$. Since $Y$ is flat, there is a neighborhood $V$ of $s$ and a system of partial symplectic coordinates $(x_1,\ldots,x_n,\xi_1,\ldots,\xi_n)$ on $\pi^{-1}(U \times V)$ such that
$$
\pi(\mathcal L)\cap U\times V=\{x_1=\cdots=x_k=0\}.
$$
Repeating the argument of Lemma \ref{L:lagrangian},
$$
{\mathcal L} \cap \pi^{-1}(\pi(\mathcal L)_{reg})=\mathbb P_{\mathcal Y_{reg}}^\ast (M\times S \setminus \mathcal Y_{sing}/S).
$$
Since $\mathcal L$ is closed $\mathbb P_{\mathcal Y}^\ast (M|S) \subseteq \mathcal L$.
Since $\mathcal L$ is irreducible and both spaces have the same dimension, the inclusion is an equality.
\end{proof}

We present now an alternative construction of the conormal of a flat deformation of a hypersurface. This construction is more suitable to
compute the conormal using computer algebra methods. For this purpose it is enough to consider the case where $S$ is smooth.

Let $F$ be a generator of the defining ideal of $\mathcal Y$.
Let $\mathfrak J_{F,(x_i)}$ be the ideal of $\mathbb C\{c,x,\xi,s\}$ generated by 
\begin{equation}\label{generatorsxi}
F,~ \xi_i-cF_{x_i}, \qquad i=1,...,n.
\end{equation} 

The ideal
$$
\mathfrak K_{F,(x_i)}=
\mathfrak J_{F,(x_i)}\cap 
\mathbb C\{x,\xi,s\}.
$$
defines a conic analytic subset of $T^*M\times S$, hence it also defines an analytic subset $\mathcal Con_S\mathcal{Y}$ of $\mathbb P^*(M|S)$. 

\begin{lemma}
The ideal $\mathfrak K_{F,(x_i)}$ does not depend on the choice of $F$ or $(x_i)$.
\end{lemma}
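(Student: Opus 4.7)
The plan is to exhibit, for each possible change, an isomorphism of $\mathbb{C}\{c,x,\xi,s\}$ that carries $\mathfrak{J}_{F,(x_i)}$ to its analogue and that either restricts to the identity on $\mathbb{C}\{x,\xi,s\}$ (for the change of $F$) or restricts to the natural change-of-coordinates isomorphism between $\mathbb{C}\{y,\eta,s\}$ and $\mathbb{C}\{x,\xi,s\}$ (for the change of $(x_i)$). Since an isomorphism commutes with intersection, this intrinsicity at the level of $\mathfrak{J}$ will descend to $\mathfrak{K}$.

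For independence from $F$, any other generator of $I_{\mathcal Y}$ has the form $F'=uF$ with $u$ a unit of $\mathbb{C}\{x,s\}$. I would consider the $\mathbb{C}\{x,\xi,s\}$-automorphism $\Psi$ of $\mathbb{C}\{c,x,\xi,s\}$ given by $c\mapsto cu$ and the identity on the remaining variables. Using $F'_{x_i}=u_{x_i}F+uF_{x_i}$, one checks
$$
\Psi(F)=F=u^{-1}F'\in(F'),\qquad \Psi(\xi_i-cF_{x_i})=\xi_i-cuF_{x_i}=(\xi_i-cF'_{x_i})+cu_{x_i}F,
$$
so $\Psi(\mathfrak{J}_{F,(x_i)})\subseteq \mathfrak{J}_{F',(x_i)}$; the reverse inclusion follows the same way applied to $\Psi^{-1}$. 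Since $\Psi$ fixes $\mathbb{C}\{x,\xi,s\}$ pointwise, intersecting with this subring yields $\mathfrak{K}_{F,(x_i)}=\mathfrak{K}_{F',(x_i)}$.

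For independence from the coordinates $(x_i)$, let $(y_j)$ be another system of local coordinates with $y_j=\phi_j(x)$ and invertible Jacobian. The identity $\theta=\sum\xi_i\,dx_i=\sum\eta_j\,dy_j$ gives the contravariant rule $\eta_j=\sum_i (x_i)_{y_j}\xi_i$, which promotes to a natural isomorphism $\Phi:\mathbb{C}\{y,\eta,s\}\xrightarrow{\sim}\mathbb{C}\{x,\xi,s\}$; extend it by $c\mapsto c$ to $\tilde\Phi:\mathbb{C}\{c,y,\eta,s\}\xrightarrow{\sim}\mathbb{C}\{c,x,\xi,s\}$. Writing $\tilde F(y,s)=F(\phi^{-1}(y),s)$, the chain rule gives
$$
\tilde\Phi(\eta_j-c\tilde F_{y_j})=\sum_i (x_i)_{y_j}(\xi_i-cF_{x_i}),
$$
so the generators of $\mathfrak{J}_{\tilde F,(y_j)}$ map to combinations of those of $\mathfrak{J}_{F,(x_i)}$ whose coefficient matrix is the invertible Jacobian. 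Hence $\tilde\Phi(\mathfrak{J}_{\tilde F,(y_j)})=\mathfrak{J}_{F,(x_i)}$, and restricting to $\mathbb{C}\{y,\eta,s\}$ gives $\Phi(\mathfrak{K}_{\tilde F,(y_j)})=\mathfrak{K}_{F,(x_i)}$, which is precisely the claim that the ideal is intrinsic.

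The main obstacle is merely bookkeeping: in the first step the images $\Psi(\xi_i-cF_{x_i})$ and the desired generators $\xi_i-cF'_{x_i}$ agree only modulo $F$, so one has to verify that the discrepancy $cu_{x_i}F$ already lies in the target ideal (which it does, since $F\in\mathfrak{J}_{F',(x_i)}$ because $u$ is a unit). Beyond this, the argument uses only the chain rule and the invertibility of units and of the coordinate Jacobian, with no deeper analytic input required.
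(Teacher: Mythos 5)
Your proof is correct, and the coordinate-change half is essentially the paper's own argument: both of you use the contravariant rule $\sum_j\eta_j dy_j=\sum_i\xi_i dx_i$ and the chain rule to express each $\eta_j-c\widetilde F_{y_j}$ as an invertible-matrix combination of the $\xi_i-cF_{x_i}$. The interesting divergence is in the independence from the generator $F$. The paper writes $F'=\varphi F$ and absorbs the unit into the fibre variables, setting $\xi_i'=\varphi^{-1}\xi_i$; this identifies $\mathfrak J_{\varphi F}$ with the image of $\mathfrak J_F$ under a rescaling of the $\xi$'s, which does \emph{not} fix $\mathbb C\{x,\xi,s\}$ pointwise, so the paper must then invoke the $\mathbb C^\ast$-action to argue that $\mathfrak K_F$ admits generators homogeneous in $\xi$ and that a degree-$d$ homogeneous generator only gets multiplied by the unit $\varphi^{-d}$. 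You instead absorb the unit into the auxiliary variable $c$ via $\Psi(c)=cu$; since $c$ is precisely the variable eliminated when passing from $\mathfrak J$ to $\mathfrak K$, your automorphism restricts to the identity on $\mathbb C\{x,\xi,s\}$ and the equality $\mathfrak K_{F}=\mathfrak K_{uF}$ drops out immediately, with no homogeneity considerations. Your route is shorter and avoids the (slightly delicate, and only sketched) claim that $\mathfrak K_F$ has $\xi$-homogeneous generators; what the paper's version buys in exchange is the explicit homogeneity of $\mathfrak K_F$ under the $\mathbb C^\ast$-action, which is what makes $\mathcal Con_S\mathcal Y$ a well-defined conic, hence projectivizable, subset -- a fact the paper needs anyway. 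The only bookkeeping point worth making explicit in your write-up is that $\Psi(\mathfrak J_F\cap\mathbb C\{x,\xi,s\})=\Psi(\mathfrak J_F)\cap\mathbb C\{x,\xi,s\}$ because $\Psi$ is bijective and preserves the subring, which you do flag.
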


\begin{proof}
Given another system of local coordinates $(y_i)$ there are function $\eta_i$ such that
$
\sum_i\eta_i dy_i=\sum_i\xi_i dx_i.
$
Since
$$
\textstyle{\sum_i}\eta_idy_i=\textstyle{\sum_i}\eta_i\Sigma_j\textstyle\frac{\partial y_i}{\partial x_j}dx_j=
\Sigma_j\textstyle{\sum_i}\textstyle\frac{\partial y_i}{\partial x_j}\eta_idx_j,
$$
$$
\xi_j- c F_{x_j}
=\textstyle{\sum_i}\textstyle\frac{\partial y_i}{\partial x_j}\eta_i-
c\textstyle{\sum_i} F_{y_i}\textstyle\frac{\partial y_i}{\partial x_j}
=\textstyle{\sum_i}\textstyle\frac{\partial y_i}{\partial x_j}(\eta_i-c  F_{y_i}).
$$
Since the Jacobian matrix of the coordinate change is invertible, $\mathfrak J_{F,(x_i)}$ does not depend on $(x_i)$.

Assume that $\varphi$ does not vanish. 
Since $\xi_i-c  (\varphi F)_{x_i}=
\xi_i-c\varphi   F_{x_i}- cF \varphi_{x_i}$,
$\mathfrak J_{\varphi F}$ is generated by 
\begin{equation}\label{generatorsprime}
F,~ \xi'_i-c F_{x_i}, \qquad i=1,...,n,
\end{equation}
where $\xi'_i=\varphi^{-1}\xi_i$, $i=1,...,n$. 

Consider the actions of $\mathbb C^*$ into $T^*M\times S\times \mathbb C$ and $T^*M\times S$ given by
$$
t\cdot ((x_i),(\xi_i),(s_j),c)=((x_i),(t\xi_i),(s_j),tc),
$$
$$
t\cdot ((x_i),(\xi_i),(s_j))=((x_i),(t\xi_i),(s_j)).
$$
By (\ref{generatorsxi}), the ideals $\mathfrak J_F$  [$\mathfrak K_F$] are generated by homogeneous polynomials on $\xi_1,...,\xi_n,c$ [$\xi_1,...,\xi_n$].
Assume that $\mathfrak K_F$ is generated by the homogeneous polynomials 
$$
P_k(\xi_1,...,\xi_n), k=1,...,m.
$$
It follows from (\ref{generatorsxi}) and (\ref{generatorsprime}) that 
$\mathfrak K_{\varphi F}$ is generated by  $P_k(\xi'_1,...,\xi'_n), k=1,...,m$.
If $P_k$ is homogeneous of degree $d_k$, $P_k(\xi'_1,...,\xi'_n)=\varphi^{-d_k}P_k(\xi_1,...,\xi_n)$.
Hence $\mathfrak K_F=\mathfrak K_{\varphi F}$.
\end{proof}

\begin{theorem}
If $\mathcal Y$ is a flat deformation over $S$ of a hypersurface of $M$, $\mathbb P^*_\mathcal{Y}(M|S)=\mathcal Con_S\mathcal Y$.
\end{theorem}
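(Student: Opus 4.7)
The approach is to show that both $\mathbb P^\ast_{\mathcal Y}(M|S)$ and $\mathcal Con_S\mathcal Y$ are the analytic closure in $\mathbb P^\ast(M|S)$ of one and the same constructible set, the honest relative conormal of the smooth locus $\mathcal Y_{\mathrm{reg}}$. By the preceding lemma we may fix one choice of generator $F$ and local coordinates $(x_i)$, and as the paper remarks it suffices to treat $S$ smooth, so that $T^\ast(M|S)=T^\ast M\times S$ and affine elimination makes sense.

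Step 1 (conormal of the smooth locus). Flatness of $\mathcal Y \to S$ forces $d_xF(x_0,s_0)\neq 0$ at any $(x_0,s_0)\in\mathcal Y_{\mathrm{reg}}$, so that $\mathcal Y$ is a submanifold of $M\times S$ over $S$ near $(x_0,s_0)$, with
\[
T_{(x_0,s_0)}(\mathcal Y/S)=\ker d_x F(x_0,s_0).
\]
The relative conormal line at $(x_0,s_0)$ is therefore $\mathbb C\cdot d_xF(x_0,s_0) = \mathbb C\cdot\sum_i F_{x_i}(x_0,s_0)\,dx_i$, and the projective conormal of the smooth part is
\[
\Sigma:=\bigl\{(x,[\xi],s)\in\mathbb P^\ast(M|S):F(x,s)=0,\ 0\neq\xi\in\mathbb C\cdot d_xF(x,s)\bigr\}.
\]
By definition $\mathbb P^\ast_{\mathcal Y}(M|S)=\overline{\Sigma}$ in $\mathbb P^\ast(M|S)$.

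Step 2 (identification of $\mathcal Con_S\mathcal Y$). Since the equations $\xi_i=cF_{x_i}$ determine $\xi$ uniquely from $(x,s,c)$, the subvariety $V(\mathfrak J_{F,(x_i)})\subset T^\ast M\times S\times\mathbb C$ is the graph of
\[
\mu:\mathcal Y\times\mathbb C\to T^\ast M\times S,\qquad (x,s,c)\mapsto(x,\,c\,d_xF(x,s),\,s).
\]
The image $\mu(\mathcal Y\times\mathbb C^\ast)$, projectivized, is exactly $\Sigma$: points of $\mathcal Y_{\mathrm{sing}}$ contribute only the zero section $(\xi=0)$ which disappears under projectivization, and over $\mathcal Y_{\mathrm{reg}}$ the $c\in\mathbb C^\ast$ action sweeps out precisely the conormal line $\mathbb C\cdot d_xF$. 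The ideal $\mathfrak K_{F,(x_i)}=\mathfrak J_{F,(x_i)}\cap\mathbb C\{x,\xi,s\}$ cuts out the analytic closure $\overline{\mathrm{Im}\,\mu}$ in $T^\ast M\times S$, so its projectivization $\mathcal Con_S\mathcal Y$ equals $\overline{\Sigma}$.

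The only nontrivial step is the elimination statement $V(\mathfrak K_{F,(x_i)})=\overline{\mathrm{Im}\,\mu}$ in the analytic category. The naive projection $T^\ast M\times S\times\mathbb C\to T^\ast M\times S$ is not proper, so Remmert's proper mapping theorem does not apply directly; this is the place where I expect the argument to need genuine care. The remedy is to exploit the homogeneity already used in the previous lemma: the generators $\xi_i-cF_{x_i}$ are bihomogeneous in $(\xi,c)$ of bidegree $(1,0)$ and $(0,1)$, so one may pass to $T^\ast M\times S\times\mathbb P^1_{[c:c']}$ and close $V(\mathfrak J_{F,(x_i)})$ there. The closure is proper over $T^\ast M\times S$, so its image is an analytic subset; a direct check on the affine chart $c'\neq 0$ identifies this image with $V(\mathfrak K_{F,(x_i)})$, and the chart $c'=0$ contributes only the zero section. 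This yields the required equality, and combined with Steps 1 and 2 proves $\mathbb P^\ast_{\mathcal Y}(M|S)=\mathcal Con_S\mathcal Y$.
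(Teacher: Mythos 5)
Your overall strategy (reduce everything to the single claim $V(\mathfrak K_{F,(x_i)})=\overline{\mathrm{Im}\,\mu}$, where $\mu(x,s,c)=(x,c\,d_xF(x,s),s)$) is reasonable, and Steps 1--2 are essentially right in content, but the step you yourself isolate as ``the only nontrivial step'' is not actually closed, and two of the assertions you lean on there are false. First, a preliminary point: ``flatness of $\mathcal Y\to S$ forces $d_xF\neq 0$ on $\mathcal Y_{\rm reg}$'' is wrong if $\mathcal Y_{\rm reg}$ means the regular locus of the total space (take $F=y^2-x^3-s$: the total space is smooth, yet $d_xF$ vanishes at the origin); what you need, and what the construction uses, is the relative smooth locus, where $d_xF\neq0$ by definition, not by flatness. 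The serious gaps are in the elimination step. (i) To apply Remmert you need the closure of $V(\mathfrak J_{F,(x_i)})$ in $T^*M\times S\times\mathbb P^1_{[c:c']}$ to be an \emph{analytic} subset, and this is not automatic; the natural candidate, the zero set of the homogenized ideal $(F,\,c'\xi_i-cF_{x_i})$, is analytic but strictly larger: at $c'=0$ it contains the \emph{entire} cotangent fibre over every relatively singular point of $\mathcal Y$, not just the limiting covectors, so it is not the closure of the graph, and analyticity of the closure needs a separate argument (e.g.\ Remmert--Stein plus a dimension count). (ii) The claim that the chart $c'=0$ ``contributes only the zero section'' is false: for $F=y^2-x^3$ (trivial deformation), along $(x,y)=(t^2,t^3)$ with $c=1/(2t^3)$ one has $c\,d_xF=-\tfrac32 t\,dx+dy\to dy\neq 0$, so the closure at $c=\infty$ contains nonzero covectors over the singular point --- indeed these limit covectors are exactly the part of the conormal over $\mathcal Y_{\rm sing}$ that the theorem is about. (iii) The ``direct check on the chart $c'\neq0$ identifying this image with $V(\mathfrak K_{F,(x_i)})$'' is precisely the hard inclusion $V(\mathfrak K)\subseteq\overline{\mathrm{Im}\,\mu}$; it does not follow from inspecting a chart. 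It requires, besides (i), a Nullstellensatz-type argument: a function of $(x,\xi,s)$ vanishing on $\overline{\mathrm{Im}\,\mu}$ vanishes on $V(\mathfrak J)$, hence some power lies in $\mathfrak J\cap\mathbb C\{x,\xi,s\}=\mathfrak K$, so no point off $\overline{\mathrm{Im}\,\mu}$ survives in $V(\mathfrak K)$. None of this is in your text.

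For comparison, the paper does not attempt a general ``elimination computes the closure of the projection'' statement. It first verifies the equality over the relative smooth locus by an explicit computation in adapted relative coordinates where $F=x_1$ (there $\mathfrak K=(x_1,\xi_2,\dots,\xi_n)$), which, since $\mathcal Con_S\mathcal Y$ is closed, gives $\mathbb P^*_{\mathcal Y}(M|S)\subseteq\mathcal Con_S\mathcal Y$; it then rules out irreducible components of $\mathcal Con_S\mathcal Y$ lying over $\mathcal Y_{\rm sing}$ by using the shape of the generators $\xi_i-cF_{x_i}$: over $\mathcal Y_{\rm sing}$ any zero of $\mathfrak J$ has $\xi=c\,d_xF=0$, i.e.\ lies in the zero section. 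If you want to keep your route you must supply (i)--(iii); otherwise the component-by-component argument over the singular locus, as in the paper, is the shorter way to finish.
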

\begin{proof}
If $\mathcal Y$ is non singular at a point $o$, there is a partial system of symplectic coordinates $(x_1,...,x_n,\xi_1,...,\xi_n)$ such that $F=x_1$ in a neighborhood $U$ of $o$.
Hence $\mathfrak J_{F,(x_i)}$ is generated by 
\begin{equation}\label{preconormal}
\xi_1-c,\xi_2,...,\xi_n,x_1.
\end{equation}
Therefore $\mathfrak K_{F,(x_i)}$ is generated by $x_1,\xi_2,...,\xi_n$. 
Hence $\mathbb P^*_\mathcal{Y}(M|S)=\mathcal Con_S\mathcal Y$ in $\pi^{-1}(U)$. 
Therefore $\mathcal Con_S\mathcal Y$ contains $\mathbb P^*_\mathcal{Y}(M|S)$. 
Assume that there is an irreducible component $\Gamma$ of $\mathcal Con_S\mathcal Y$ that is not contained in $\mathbb P^*_\mathcal{Y}(M|S)$.
 Then $\Gamma$ is contained in $\mathcal Y_{\rm sing}\times_{M\times S}\mathbb P^*(M\times S|S)$.
Hence the set of zeros of $\mathfrak J_{f,(x_i)}$ contains points of 
$$
\mathcal Y_{\rm sing}\times_{M\times S}T^*M\times S\times \mathbb C\setminus M\times S\times \mathbb C.
$$
But it follows from (\ref{generatorsxi}) that the intersection of the set of zeros of $\mathfrak J_{F,(x_i)}$ with $\mathcal Y_{\rm sing}\times_{M\times S}T^*M\times S\times \mathbb C$ is contained in $M\times S\times \mathbb C$.
\end{proof}

The following Singular routine (see \cite{DGPS}) computes the relative conormal of the hypersurface defined by $z^2+y^3+sx^4$ when we assume $\theta=udx+vdy+wdz$ and we look at $s$ as a deformation parameter.

\medskip
{\tt
\noindent
ring r=0,(c,u,v,w,x,y,z,s),dp;\\
poly F=z2+y3+sx4;\\
ideal I=F,u-c*diff(F,x),v-c*diff(F,y),w-c*diff(F,z);\\
ideal J=eliminate(I,c);\\
J;
}

\medskip\noindent
If we consider the suitable contact coordinates and  choose a different ordering we can reduce substantially the number of equations we obtain. 
\medskip

Let $T_\varepsilon$ be the complex space with local ring $\mathbb C\{\varepsilon\}/(\varepsilon^2)$.
Let $I,J$ be ideals of the ring $\mathbb C\{s_1,...,s_m\}$. Assume $J\subset I$. 
Let $X,S,T$ be the germs of complex spaces with local rings $\mathbb C\{x,y,p\}$,
$\mathbb C\{s\}/I,\mathbb C\{s\}/J$. Consider the maps
$i:X\hookrightarrow X\times S$, $j: X\times S \hookrightarrow  X\times T$ and $q: X\times S \to S$.

Let $\frak m_X,\frak m_S$ be the maximal ideals of 
$\mathbb C \{x,y,p\}$, $\mathbb C \{s\}/I$.
Let $\frak n_S$ be the ideal of $\mathcal O_{X\times S}$
generated by $\frak m_X\frak m_S$.

Let $\chi: X\times S \to X\times S$ be a relative contact transformation.
If $\chi$ verifies 
\begin{equation}\label{DIDENTITY}
\chi \circ i=i,  ~~ q \circ \chi=q ~ \hbox{ \rm and } ~ \chi(0,s)= (0,s) ~~
\hbox{ \rm for each } s.
\end{equation}
 there are 
$\alpha,\beta,\gamma\in \frak n_S$ such that
\begin{equation}\label{abc}
\chi(x,y,p,s)=(x+\alpha,y+\beta,p+\gamma,s).
\end{equation}

\begin{theorem}\label{T:RCKT}
\begin{enumerate}[$(a)$]

\item Let $\chi : X\times S \to X \times S$ be a relative contact transformation that verifies \em (\ref{DIDENTITY}). \em
Then $\gamma$ is determined by $\alpha$ and $\beta$. Moreover, there is $\beta_0 \in \mathfrak{n}_S+p\mathcal O_{X\times S}$ such that $\beta$ is the solution of the Cauchy problem
\begin{equation}\label{E:CAUCHY}
\left( 1+\frac{\partial \alpha}{\partial x}  + p\frac{\partial \alpha}{\partial y}\right)\frac{\partial \beta}{\partial p} - p\frac{\partial \alpha}{\partial p}\frac{\partial \beta}{\partial y} - \frac{\partial \alpha}{\partial p}\frac{\partial \beta}{\partial x}=p\frac{\partial \alpha}{\partial p},  
\end{equation}
$\beta+p\mathcal O_{X\times S} = \beta_0$.

\item Given $\alpha\in \mathfrak{n}_S$, $\beta_0 \in \mathfrak{n}_S+p\mathcal O_{X\times S}$, 
there is a unique relative contact transformation  $\chi$ that verifies \em (\ref{DIDENTITY}) \em
and the conditions of statement $(a)$. 
We denote $\chi$ by $\chi_{\alpha,\beta_0}$.
\item 
If  $S=T_\varepsilon$ the Cauchy problem \emph{(\ref{E:CAUCHY})}  simplifies into 
\begin{equation}\label{E:CAUCHYTEBETA}
\frac{\partial \beta}{\partial p}=p\frac{\partial \alpha}{\partial p}, \qquad \beta +  p\mathcal O_{X\times T_\varepsilon} = \beta_0.
\end{equation}
\item \em
Let  $\chi=\chi_{\alpha,\beta_0}:X\times T \to X\times T$ be a relative contact transformation. Then, $\chi$ is a lifting to $T$ of $j^*\chi=\chi_{j^*\alpha,j^*\beta_0}:X\times S \to X\times S$. \em
If $\chi$ equals \emph{(\ref{abc})}, 
\[
j^*\chi(x,y,p,s)=(x+j^*\alpha,y+j^*\beta,p+j^*\gamma,s).
\]

\item
Assume $\mathcal O_T=\mathbb C\{s\}$, $\mathcal O_{T_0}= 
\mathbb C\{s,\varepsilon\}/(\varepsilon^2,\varepsilon s_1,\ldots \varepsilon s_m)$.
Given a relative contact transformation 
\begin{equation}\label{CHICHI}
\chi(x,y,p,s)=(x+A,y+B,p+C,s)
\end{equation}
over $T$ and $\alpha,\beta, \gamma \in \mathfrak{m}_X$, 
\begin{equation}\label{AEPAL}
\chi_0(x,y,p,s,\varepsilon)=
(x+A+\varepsilon\alpha,y+B+\varepsilon\beta,p+C+\varepsilon\gamma,s,\varepsilon)
\end{equation}
is a relative contact transformation over $T_0$  if and only if 
\begin{equation}\label{TEP}
(x,y,p,\ep)\mapsto (x+\varepsilon\alpha,y+\varepsilon\beta,p+\varepsilon\gamma)
\end{equation}
is a relative contact transformation over $T_\ep$.
Moreover, all liftings of $\chi$ to $T_0$ are of the type $(\ref{AEPAL})$.
\end{enumerate}
\end{theorem}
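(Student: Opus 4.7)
The plan is to translate each assertion into a direct computation with the relative contact form $\omega=dy-p\,dx$ and extract the corresponding analytic statements.

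For (a), I would expand $\chi^{*}\omega=d(y+\beta)-(p+\gamma)d(x+\alpha)$ in the relative basis $dx,dy,dp$. The requirement that $\chi^{*}\omega=\lambda\omega$ for a unit $\lambda$ yields three scalar equations: the coefficient of $dp$ gives $\beta_p=(p+\gamma)\alpha_p$, while matching the $dx$- and $dy$-coefficients against $-\lambda p$ and $\lambda$ gives
\[
(p+\gamma)(1+\alpha_x+p\alpha_y)=p+\beta_x+p\beta_y.
\]
Because $\alpha\in\mathfrak n_S$, the factor $1+\alpha_x+p\alpha_y$ is a unit, so this relation determines $p+\gamma$ (and hence $\gamma$) uniquely in terms of $\alpha$ and $\beta$. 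Substituting back into the $dp$-equation produces exactly (\ref{E:CAUCHY}).

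For (b), the computation of (a) is reversible: given a solution $\beta$ of the Cauchy problem with data $\beta_0$, defining $\gamma$ by the formula above and then $\chi$ by (\ref{abc}) yields a relative contact transformation satisfying (\ref{DIDENTITY}). So the task reduces to solving (\ref{E:CAUCHY}). I would rewrite it in evolutionary form $\beta_p=\alpha_p(p+\beta_x+p\beta_y)/(1+\alpha_x+p\alpha_y)$, whose right-hand side is analytic in all its arguments. Since $\{p=0\}$ is non-characteristic and the data are holomorphic, Cauchy--Kovalevski provides a unique holomorphic $\beta$; a quick check using $\alpha_p\in\mathfrak m_S\mathcal O_{X\times S}$ shows that $\beta\in\mathfrak n_S$. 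Statement (c) follows by substituting $\alpha=\varepsilon\tilde\alpha$, $\beta=\varepsilon\tilde\beta$ into (\ref{E:CAUCHY}) and using $\varepsilon^{2}=0$: every term quadratic in these $\mathfrak n_{T_{\varepsilon}}$-elements drops out, leaving (\ref{E:CAUCHYTEBETA}). Statement (d) then follows because applying $j^{*}$ to (\ref{E:CAUCHY}) gives the Cauchy problem for $j^{*}\beta$ with data $j^{*}\beta_0$ and coefficient $j^{*}\alpha$, so the uniqueness in (b) forces $j^{*}\beta$ to coincide with the $\beta$-component of $\chi_{j^{*}\alpha,j^{*}\beta_0}$.

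For (e), the decisive algebraic observation is that in $\mathcal O_{T_0}=\mathbb C\{s,\varepsilon\}/(\varepsilon^{2},\varepsilon s_1,\ldots,\varepsilon s_m)$ the element $\varepsilon$ annihilates the ideal $(s_1,\ldots,s_m)$. Since $A,B,C$ and all their relative partial derivatives lie in $(s)$, the products $\varepsilon A$, $\varepsilon A_x$, etc.\ vanish in $\mathcal O_{X\times T_0}$. A direct expansion therefore gives
\[
\chi_0^{*}(dy-p\,dx)=\chi^{*}(dy-p\,dx)+\varepsilon\bigl(d\beta-\gamma\,dx-p\,d\alpha\bigr),
\]
and the bracketed summand equals $\chi_{\varepsilon}^{*}(dy-p\,dx)-(dy-p\,dx)$ for the map (\ref{TEP}). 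Writing $\chi^{*}\omega=\mu\omega$ with $\mu$ a unit, I conclude that $\chi_0^{*}\omega$ is a unit multiple of $\omega$ over $T_0$ if and only if $d\beta-\gamma\,dx-p\,d\alpha$ is a multiple of $\omega$, which is the condition for (\ref{TEP}) to be a relative contact transformation over $T_{\varepsilon}$. The assertion that every lifting of $\chi$ to $T_0$ takes the form (\ref{AEPAL}) follows because the kernel of $\mathcal O_{X\times T_0}\to\mathcal O_{X\times T}$ is $\varepsilon\mathcal O_X$ and the identity conditions (\ref{DIDENTITY}) force the $\varepsilon$-correction to lie in $\varepsilon\mathfrak m_X$. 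The main obstacle I expect is making (b) rigorous: one must verify that the Cauchy--Kovalevski solution remains in $\mathfrak n_S$, that $\gamma$ defined from it is also in $\mathfrak n_S$, and that running the computation of (a) in reverse really produces a relative contact transformation compatible with (\ref{DIDENTITY}).
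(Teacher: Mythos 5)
Your computation is correct and is, in substance, the standard proof: the paper itself gives no argument for Theorem \ref{T:RCKT}, deferring entirely to Theorems 2.4 and 2.6 of \cite{Mendes}, and the route you take (expand $\chi^{*}(dy-p\,dx)$ in the relative basis $dx,dy,dp$, read off $\beta_p=(p+\gamma)\alpha_p$ and $(p+\gamma)(1+\alpha_x+p\alpha_y)=p+\beta_x+p\beta_y$, eliminate $\gamma$ to get (\ref{E:CAUCHY}), solve by Cauchy--Kovalevskaya with initial surface $\{p=0\}$, and for (e) use that $\varepsilon$ annihilates $(s_1,\ldots,s_m)$ in $\mathcal O_{T_0}$ so that $\chi_0^{*}\omega=\chi^{*}\omega+\varepsilon(d\beta-\gamma\,dx-p\,d\alpha)$) is exactly the natural one and matches the cited source in spirit. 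Two points deserve the care you yourself flag. First, in (b) the base $S$ has local ring $\mathbb C\{s\}/I$, possibly non-reduced, so Cauchy--Kovalevskaya must be applied to representatives in $\mathbb C\{x,y,p,s\}$ and one must check that the recursive construction of the power-series solution in $p$ respects the ideal $I\,\mathbb C\{x,y,p,s\}$ (it does, since each coefficient is obtained by ring operations and $\partial_x,\partial_y$ applied to earlier data, and $I$ only involves the $s$-variables); this also yields $\beta\in\mathfrak n_S$ because the right-hand side of the evolutionary form carries the factor $\alpha_p\in\mathfrak m_S\mathcal O_{X\times S}$ and the $p$-integration restores the $\mathfrak m_X$-factor. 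Second, when you ``run the computation in reverse'' you should verify that the resulting $\gamma$ is compatible with the normalization $\chi(0,s)=(0,s)$ in (\ref{DIDENTITY}); since $\gamma(0,s)$ is essentially $\beta_x(0,s)$, this is a genuine compatibility condition on the data rather than automatic, and it is implicit in the formulation the paper imports from \cite{Mendes}. With these caveats, which you anticipated, the proposal is sound and has the advantage of being self-contained where the paper is not.
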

\begin{proof}
See Theorems 2.4 and 2.6 of \cite{Mendes}.
\end{proof}

\section{Relative Legendrian Curves}\label{S:RLC}

 Let $\theta=\xi dx + \eta dy$ be the canonical $1$-form of $T^\ast \C^2 = \C^2 \times \C^2$. 
Hence $\pi=\pi_{\mathbb C^2} : \mathbb{P}^\ast \C^2=\C^2 \times \mathbb{P}^1 \to \C^2$ is given by $\pi(x,y; \xi : \eta)=(x,y)$. 
 Let $U\,[V]$ be the open subset of  $\mathbb{P}^\ast \C^2$ defined by $ \eta \neq 0\,[\xi \neq 0]$. 
 Then $\theta / \eta\,[\theta/\xi]$ defines a contact form $dy-pdx\,[dx-qdy]$ on $U\,[V]$, 
 where $p=- \xi / \eta\,[q=- \eta / \xi]$. 
 Moreover, $dy-pdx$ and $dx-qdy$ define the structure of contact manifold on $\mathbb{P}^\ast \C^2$.

 If $L$ is the germ of a Legendrian curve of $\mathbb P^*M$ and $L$ is not a fiber of $\pi_M$, $\pi_M(L)$ is the germ of a plane curve with irreducible tangent cone
 and $L=\mathbb P^*_{\pi_M(L)}M$.

Let $Y$ be the germ of a plane curve with irreducible tangent cone at a point $o$ of a surface $M$.
Let $L$ be the conormal of $Y$. Let $\sigma$ be the only point of $L$ such that $\pi_M(\sigma)=o$.
Let $k$ be the multiplicity of $Y$. Let $f$ be a defining function of $Y$. In this situation we will always choose a system of local coordinates $(x,y)$  of $M$ such that the tangent cone $C(Y)$ of $Y$ equals $\{y=0\}$.

 \begin{lemma}\label{GOODCURVE}
The following statements are equivalent$:$
\begin{enumerate}[$(a)$]
\item
mult$_\si(L)=$mult$_o(Y)$\em ; \em
\item
$C_\sigma(L)\not\supset (D\pi(\sigma))^{-1}(0,0)$\em ; \em
\item
$f\in (x^2,y)^k$\em ; \em
\item
if $t\mapsto (x(t),y(t))$ parametrizes a branch of $Y$, $x^2$ divides $y$.
\end{enumerate}
\end{lemma}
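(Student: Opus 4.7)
\emph{Setup and $(a)\Leftrightarrow(d)$, $(b)\Leftrightarrow(d)$.} Work in the local chart $(x,y,p)$ on $\mathbb P^*M$ near $\sigma=(0,0,0)$, with $p=-\xi/\eta$. Since $\pi(x,y,p)=(x,y)$, the subspace $(D\pi(\sigma))^{-1}(0,0)$ is precisely the $p$-axis. Decompose $Y=\bigcup_i Y_i$ into branches and parametrize $Y_i$ by $(x_i(t),y_i(t))$ with $\mathrm{ord}_t x_i=k_i$ (so $\sum_i k_i=k$) and $\mathrm{ord}_t y_i=m_i>k_i$, since every branch is tangent to $\{y=0\}$. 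The conormal decomposes accordingly, $L=\bigcup_i L_i$, with $L_i$ parametrized by $(x_i(t),y_i(t),p_i(t))$ where $p_i=y_i'/x_i'$, so $\mathrm{ord}_t p_i=m_i-k_i$. Statement $(d)$ reads $m_i\geq 2k_i$ for every $i$. Now $\mathrm{mult}_\sigma L_i=\min(k_i,m_i,m_i-k_i)=\min(k_i,m_i-k_i)$, which equals $k_i$ precisely when $m_i\geq 2k_i$; summing gives $\mathrm{mult}_\sigma L\leq k$ with equality iff $(d)$ holds, establishing $(a)\Leftrightarrow(d)$. Analogously $C_\sigma(L_i)$ is the coordinate line of smallest parametrization order: the $y$-axis is never selected (as $m_i>m_i-k_i$), and the $p$-axis is selected iff $m_i-k_i<k_i$, i.e., iff $(d)$ fails for branch $i$. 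Since $C_\sigma(L)=\bigcup_i C_\sigma(L_i)$, we conclude $(b)\Leftrightarrow(d)$.

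\emph{Reduction of $(c)\Leftrightarrow(d)$ to one branch.} Factor $f=f_1\cdots f_r$ with $f_i$ an equation of $Y_i$. Consider on $\mathbb C\{x,y\}$ the semiquasihomogeneous valuation $\nu$ with weights $w(x)=1$, $w(y)=2$; a direct verification yields $(x^2,y)^n=\{g:\nu(g)\geq 2n\}$. Because the associated graded ring (the weighted polynomial ring $\mathbb C[x,y]$) is a domain, $\nu$ is additive, so $\nu(f)=\sum_i\nu(f_i)$. The tangent cone of $f_i$ being $y^{k_i}$ forces the monomial $y^{k_i}$ to appear with nonzero coefficient, hence $\nu(f_i)\leq 2k_i$ and $\nu(f)\leq 2k$. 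Therefore $f\in(x^2,y)^k$ iff $\nu(f)=2k$ iff $\nu(f_i)=2k_i$ for every $i$ iff each $f_i\in(x^2,y)^{k_i}$. It remains to prove, for a single irreducible branch, that $f_i\in(x^2,y)^{k_i}$ iff $m_i\geq 2k_i$.

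\emph{Single branch case; main obstacle.} Put $f_i$ in Weierstrass form $f_i=y^{k_i}+\sum_{j=1}^{k_i}a_j(x)y^{k_i-j}$ with $a_j\in(x)$. The Puiseux presentation $x=t^{k_i}$, $y=Y(t)$ with $\mathrm{ord}_tY=m_i$ identifies each $a_j(x)$, up to sign, with the $j$-th elementary symmetric function of the Galois conjugates $Y(\omega t)$, $\omega^{k_i}=1$. This gives the key estimate $\mathrm{ord}_x a_j\geq jm_i/k_i$, with equality $\mathrm{ord}_x a_{k_i}=m_i$ since $\prod_\omega Y(\omega t)$ has nonvanishing leading coefficient in front of $t^{k_im_i}$. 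Any monomial $x^n y^{k_i-j}$ of $f_i$ thus has weighted degree $n+2(k_i-j)\geq 2k_i+j(m_i-2k_i)/k_i$, which is $\geq 2k_i$ for every $j\in\{0,\dots,k_i\}$ iff $m_i\geq 2k_i$; and the offending monomial $x^{m_i}$ in $a_{k_i}$ realizes the strict failure when $m_i<2k_i$. Extracting these Puiseux bounds on the Weierstrass coefficients is the only delicate step; once established, the remaining implications are routine bookkeeping with the weighted filtration.
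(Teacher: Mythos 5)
Your proof is correct, and its core is the same as the paper's: for a single branch you compare the orders $k_i$, $m_i$, $m_i-k_i$ of $x$, $y$, $p$ to get $(a),(b)\Leftrightarrow(d)$, and you prove $(c)\Leftrightarrow(d)$ exactly as the paper does, by writing $f_i$ in Weierstrass form, identifying the coefficients $a_j$ with elementary symmetric functions of the Puiseux conjugates, and using $\mathrm{ord}_x\,a_j\ge jm_i/k_i$ with equality for $j=k_i$. Where you genuinely differ is the reduction to one branch: the paper disposes of it with the one-line assertion that the equivalence holds iff it holds for each branch, which for statement $(c)$ (a condition on the product $f=f_1\cdots f_r$) is not obvious; your weight-$(1,2)$ valuation $\nu$, the identity $(x^2,y)^n=\{\nu\ge 2n\}$, additivity of $\nu$, and the bound $\nu(f_i)\le 2k_i$ supply the missing argument, and the analogous termwise inequality $\min(k_i,m_i-k_i)\le k_i$ does the same for $(a)$. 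This makes your write-up slightly more complete than the paper's at that point. One small imprecision: in the case $m_i=2k_i$ the tangent line of $L_i$ is not a coordinate axis but the line $\{y=p-cx=0\}$ with $c\neq 0$ (compare Lemma \ref{VERYGOODCURVE} of the paper), so "the coordinate line of smallest parametrization order" is not literally correct there; however, the criterion you actually use, namely that the $p$-axis lies in $C_\sigma(L_i)$ iff $\mathrm{ord}\,p_i<\mathrm{ord}\,x_i$, is right, so the equivalence $(b)\Leftrightarrow(d)$ is unaffected.
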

\begin{proof}
The equivalence of statements holds if and only if it holds for each branch.
Assume $Y$ irreducible. 
Assume $x(t)=t^k$ and $y(t)=t^n\varphi(t)=\widetilde\varphi(t)$, 
where $\varphi$ is a unit of $\mathbb C\{t\}$.
Since $C(Y)=\{y=0\}$, $n>k$. 
There is a unit $\psi$ of of $\mathbb C\{t\}$ such that $p(t)=t^{n-k}\psi(t)$.
Statements $(a)$ and $(b)$ hold if and only if $n-k\ge k$. 
Statement $(d)$ holds if and only if $n\ge 2k$.
Remark that 
$$
f=y^k+\textstyle{\sum_{i=1}^k}a_iy^{k-i}=\textstyle{\prod_{i=1}^k}(y-\widetilde\varphi(\theta^it))
$$
where $\theta=\exp(2\pi i/k)$.
Since $a_i$ is a homogeneous polynomial of degree $i$ on the $\widetilde\varphi(\theta^jt)$, $j=1,..,k$, $a_i\in (x^{[in/k]})$ and $a_k$ generates $(x^n)$. Therefore $(c)$ is verified if and only if $n/k\ge 2$.
\end{proof}

We say that a plane curve $Y$ is \emph{generic} 
[a Legendrian curve $L$ is in \emph{generic position}\/] if it verifies the conditions of Lemma \ref{GOODCURVE}.

Given a germ of a Legendrian curve $L$ of $U$ at $\sigma$ there is a germ of a contact transformation $\chi:(U,\sigma)\to (U,\sigma)$ such that $\chi(L)$ is in generic position (see \cite{KK} Corollary 1.6.4.). 

\begin{lemma}\label{VERYGOODCURVE}
Let $\sigma$ denote the origin of $U$. Assume $L,L_1,L_2$ are germs of Legendrian branches in generic position.
\begin{enumerate}[$(a)$]
\item 
$C_\sigma(L)=\{y=p=0\}$ if and only if given a parametrization
 $t\mapsto (x(t),y(t))$ of a branch of $Y$, $x^2\not\in (y)$.
\item 
$C_\sigma(L_1)\neq C_\sigma(L_2)$ if and only if $\pi(L_1)$ and $\pi(L_2)$ have contact of order $2$.
\end{enumerate}
\end{lemma}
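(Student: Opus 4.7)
The plan is to work with explicit Puiseux parametrizations of the branches and read off the tangent cones of their Legendrian lifts from leading exponents, then translate each geometric condition into a condition on those exponents and coefficients.

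For (a), I would start as in the proof of Lemma \ref{GOODCURVE}: parametrize $\pi(L)$ by $t\mapsto (x(t),y(t))=(t^k,\,t^n\varphi(t))$ with $\varphi$ a unit and $n\ge 2k$ (generic position, Lemma \ref{GOODCURVE}$(d)$). Because $L$ is Legendrian for $dy-p\,dx$, the lift is $p(t)=\dot y(t)/\dot x(t)=\tfrac{n}{k}\varphi(0)\,t^{n-k}+\text{higher order}$, so $L$ is parametrized by
\[
t\mapsto \bigl(t^{k},\ \varphi(0)\,t^{n}+\cdots,\ \tfrac{n}{k}\varphi(0)\,t^{n-k}+\cdots\bigr).
\]
The multiplicity of $L$ at $\sigma$ is $k$ (generic position). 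Since the $y$-component already has order $n\ge 2k>k$, it contributes nothing to the tangent cone; the $p$-component has order $n-k\ge k$, with equality iff $n=2k$. Thus $C_\sigma(L)=\{y=p=0\}$ iff $n-k>k$, i.e.\ $n>2k$. Because $y=t^{n}\cdot(\text{unit})$ and $x^{2}=t^{2k}$ in $\mathbb{C}\{t\}$, the containment $x^{2}\in(y)$ is equivalent to $2k\ge n$, so combined with $n\ge 2k$ one gets $x^{2}\notin(y)\iff n>2k$. This proves the equivalence in (a).

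For (b), I apply the same computation to each branch $L_{i}$, writing $y_{i}(t_{i})=c_{i}\,t_{i}^{n_{i}}+\cdots$ with $c_{i}\ne 0$ and $n_{i}\ge 2k_{i}$. The tangent cone is then
\[
C_\sigma(L_i)=\begin{cases}\{y=0,\ p=2c_{i}\,x\} & \text{if } n_i=2k_i,\\ \{y=p=0\} & \text{if } n_i>2k_i.\end{cases}
\]
Viewing $y_{i}$ as a Puiseux series in $x$, one obtains $y_{i}(x)=c_{i}\,x^{n_{i}/k_{i}}+\cdots$, so the coefficient of $x^{2}$ in $y_{i}(x)$ equals $c_{i}$ when $n_{i}=2k_{i}$ and $0$ otherwise. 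A case-by-case inspection of the three possibilities for the pair $(n_{1},n_{2})$ shows that $C_\sigma(L_1)\ne C_\sigma(L_2)$ is equivalent to the $x^{2}$-coefficients of $y_{1}(x)$ and $y_{2}(x)$ being distinct, i.e.\ to $\nu_{x}(y_{1}-y_{2})=2$. This is precisely the statement that $\pi(L_1)$ and $\pi(L_2)$ have contact of order $2$.

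The only real subtlety will be the coefficient bookkeeping for $p$: one needs $(n/k)c_{n}\ne 0$ exactly when $c_{n}\ne 0$, so that the would-be tangent direction $\{y=0,\ p=2c_{n}x\}$ really appears when $n=2k$ and really collapses to $\{y=p=0\}$ when $n>2k$. Once this is nailed down, (a) and (b) reduce to purely exponent-level statements about the Puiseux expansion, and everything else is a direct case analysis.
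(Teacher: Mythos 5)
Your proof is correct and follows essentially the same route as the paper: the paper's one-line proof simply records, in the notation of Lemma \ref{GOODCURVE}, that $C_\sigma(L)=\{y=p=0\}$ when $n\ge 2k+1$ and $C_\sigma(L)=\{y=p-\psi(0)x=0\}$ when $n=2k$, which is exactly the exponent computation you carry out. Your version just makes explicit the translation to $x^2\notin(y)$ and the case analysis for (b), both of which are done correctly.
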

\begin{proof}
Under the notations of Lemma \ref{GOODCURVE}, $C_\sigma(L)=\{y=p=0\}$ if $n\ge 2k+1$ and $C_\sigma(L)=\{y=p-\psi(0)x=0\}$ if $n=2k$.
\end{proof}

Remark that if $Y$ is a germ of a plane curve of $\mathbb C^2$ at the origin and $C(Y)=\{y=0\}$,
its conormal is a Legendrian variety contained in $U$.
By Darboux's Theorem each germ of a contact manifold of dimension $3$ is isomorphic 
to the germ of $U$ at $\sigma$, endowed with the contact structure of $U$ defined by $dy-pdx$.

\begin{definition}
Let $S$ be a reduced complex space. Let $Y$ be a reduced plane curve. 
Let $\mathcal Y$ be a deformation of $Y$ over $S$.
We say that $\mathcal Y$ is \em generic \em if its fibers are generic.
If $S$ is a non reduced complex space we say that $\mathcal Y$ is \em generic \em if $\mathcal Y$ admits a generic lifting.
\end{definition}

Given a flat deformation $\mathcal Y$ of a plane curve $Y$ over a complex space $S$ we will denote $\mathbb P_{\mathcal Y}^\ast (\C^2|S)$ by $\mathcal Con(\mathcal Y)$.

Consider the contact transformations from $\mathbb C^3$ to $\mathbb C^3$ given by
\begin{equation}\label{SEMIS}
\Phi(x,y,p)=(\lambda x,\lambda \mu y, \mu p),~~~ \lambda,\mu\in\mathbb C^*,
\end{equation}
\begin{equation}\label{PARABOLOIDAL}
\Phi(x,y,p)=(ax+bp,y+\frac{ac}{2}x^2+\frac{bd}{2}p^2+bcxp, cx+dp),
~~~\begin{vmatrix}
a & b\\
c & d
\end{vmatrix} 
=1,
\end{equation}
\begin{equation}\label{CHANGE}
\rho_\lambda(x,y,p)=(x,y-\lambda x^2/2, p-\lambda x), ~~~~~~~~ \lambda\in\mathbb C.
\end{equation}
The contact transformations (\ref{PARABOLOIDAL}) are called \em paraboloidal contact transformations. \em

\begin{example}\label{LIE}
\begin{enumerate}[$(a)$]
\item Let $k,n$ be integers such that $(k,n)=1$ and $0<k<n$. Let $Y=\{y^k-x^n=0\}$. Consider the contact transformation $\chi(x,y,p)=(p,y-xp,-x)$. The conormal $L$ of $Y$ is parametrized by
\[
x=t^k,\; y=t^n, \; p=\frac{n}{k}t^{n-k}.
\]
Therefore, $Y^\chi=\pi\left(\chi(L)\right)$ admits the equation $\left(xy/(k-n)\right)^k=x^{n-k}$.  We say that $Y^\chi$ is the action of the contact transformation $\chi$ on the plane curve $Y$.
\item Setting $Y=\{y^3-x^7=0\}$, $\chi(x,y,p)=(x+p,y+p^2/2,p)$, $Y^\chi$ admits a parametrization
\[
x=t^3+(7/3)t^4, \; y=t^7+(49/18)t^8.
\]
Changing parameters we get
\[
x=s^3, \; y=s^7+\lambda s^8+ h.o.t.,
\]
with $\lambda \neq 0$. Following \cite{Z}, $Y^\chi$ and $Y$ have the same topological type but are not analytically equivalent.
\end{enumerate}
\end{example}

\begin{theorem}\label{ALLCONTACT}\em (See \cite{AN} or \cite{Martins}.) \em
Let $\Phi:(\mathbb C^3,0)\to (\mathbb C^3,0)$ be the germ of a contact transformation. Then  $\Phi=\Phi_1\Phi_2\Phi_3$, where $\Phi_1$ is of  type $(\ref{SEMIS})$, $\Phi_2$ is of type $(\ref{PARABOLOIDAL})$ and $\Phi_3$ is of type $(\ref{abc})$, with 
$\alpha, \beta,\gamma\in\mathbb C\{x,y,p\}$. Moreover, there is $\beta_0\in \mathbb C\{x,y\}$ such that
$\beta$ verifies the Cauchy problem $(\ref{E:CAUCHY})$, $\beta-\beta_0\in (p)$   and 
\begin{equation}\label{CCOND}
\alpha,\beta,\gamma,\beta_0,\frac{\partial \alpha}{\partial x}, 
\frac{\partial \beta_0}{\partial x}, \frac{\partial \beta}{\partial p},
\frac{\partial^2 \beta}{\partial x\partial p} \in (x,y,p).
\end{equation}
If $D\Phi(0)(\{y=p=0\})=\{ y=p=0 \}$, $\Phi_2=id_{\mathbf C^3}$.
\end{theorem}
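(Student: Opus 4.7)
The plan is to decompose $\Phi$ by successive normalization: first use $\Phi_1$ of type \eqref{SEMIS} and $\Phi_2$ of type \eqref{PARABOLOIDAL} to arrange that the residual transformation $\Phi_3:=(\Phi_1\Phi_2)^{-1}\Phi$ satisfies $\partial X_3/\partial x(0)=1$, then use residual freedom to also kill the mixed second-order coefficient $\partial^2 Y_3/\partial x\,\partial p(0)$, and finally invoke Theorem~\ref{T:RCKT} to recognize the remaining $\Phi_3$ as of type~\eqref{abc}.

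For the first step, the contact identity $\Phi^{*}(dy-p\,dx)=\lambda(dy-p\,dx)$ evaluated at the origin forces $\partial Y/\partial x(0)=\partial Y/\partial p(0)=0$ and gives $\det$(the $(x,p)$-block of $D\Phi(0)$)$=\lambda(0)\neq 0$; these two vanishings are exactly what CCOND demands of $\beta_x(0)$ and $\beta_p(0)$, so they are automatic. Parametrize $\Phi_1$ by $(\lambda_s,\mu_s)$ and $\Phi_2$ by $\bigl(\begin{smallmatrix}a&b\\c&d\end{smallmatrix}\bigr)\in SL_2$, so that the linear part of $\Phi_1\Phi_2$ has block form $\bigl(\begin{smallmatrix}\lambda_s a&0&\lambda_s b\\0&\lambda_s\mu_s&0\\\mu_s c&0&\mu_s d\end{smallmatrix}\bigr)$. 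The only remaining first-order CCOND requirement is $\partial X_3/\partial x(0)=1$, and a direct linear-algebra check shows that the five-dimensional group $\{\Phi_1\Phi_2\}$ acts with enough flexibility on the space of admissible linear parts to realize it: the entries $\alpha_y(0),\alpha_p(0),\beta_y(0),\gamma_x(0),\gamma_y(0),\gamma_p(0)$ left undetermined in $\Phi_3$ are precisely those unconstrained by~\eqref{CCOND}.

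This first-order match does not exhaust the freedom in $(\lambda_s,\mu_s,a,b,c,d)$. We exploit the remaining parameter to enforce the last, non-automatic CCOND condition $\partial^2\beta/\partial x\,\partial p(0)=0$. The key observation is that the quadratic contribution $\tfrac{ac}{2}x^2+\tfrac{bd}{2}p^2+bcxp$ that $\Phi_2$ adds to the $y$-component depends nontrivially on the product $bc$; varying this along the $ad-bc=1$ constraint shifts the $xp$-coefficient of $Y_3$ affinely and can be tuned to zero.

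To identify $\Phi_3$ as of type~\eqref{abc}, set
\[
\alpha:=X_3-x,\quad \beta:=Y_3-y,\quad \gamma:=P_3-p,\quad \beta_0:=Y_3|_{p=0}-y\in\C\{x,y\}.
\]
By construction $\alpha,\beta,\gamma,\beta_0\in\mathfrak m_X$ and $\beta-\beta_0\in(p)$; the two normalizations above furnish \eqref{CCOND}. Since $\Phi_3$ is a contact germ fixing the origin, Theorem~\ref{T:RCKT}(a) says that $\beta$ is determined from $\alpha$ and $\beta_0$ by the Cauchy problem~\eqref{E:CAUCHY} and that $\gamma$ is determined from $\alpha,\beta$; Theorem~\ref{T:RCKT}(b) then identifies $\Phi_3$ with $\chi_{\alpha,\beta_0}$. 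The final clause is immediate: if $D\Phi(0)(\{y=p=0\})=\{y=p=0\}$, the $(x,p)$-block of $D\Phi(0)$ is upper triangular, so no paraboloidal rotation is needed and one may take $\Phi_2=\mathrm{id}_{\C^3}$. The main technical difficulty lies in the parameter bookkeeping for the first two steps, verifying that the freedom inside $\Phi_1\Phi_2$ is indeed sufficient to achieve the first- and second-order normalizations simultaneously; once this is checked, Theorem~\ref{T:RCKT} supplies all the remaining analytic content.
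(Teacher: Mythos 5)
The paper offers no proof of this theorem (it defers to \cite{AN} and \cite{Martins}), so your argument has to stand on its own. Your architecture --- use $\Phi_1\Phi_2$ to normalize the linear part, then recognize the residue $\Phi_3$ as a transformation of type $(\ref{abc})$ governed by the Cauchy problem --- is the right one, and your first and third steps are essentially sound. The genuine gap is your second step, the condition $\partial^2\beta/\partial x\partial p\in(x,y,p)$. You treat $\partial^2 Y_3/\partial x\partial p(0)$ as a free second-order coefficient to be tuned by the quadratic term $bcxp$ in the $y$-component of $\Phi_2$, "shifting it affinely to zero". That is not how this quantity behaves. For any contact germ $(X,Y,P)$ fixing the origin, the $dp$-component of $\Phi^*(dy-p\,dx)=\lambda(dy-p\,dx)$ gives $Y_p=PX_p$, hence $Y_{xp}(0)=P_x(0)X_p(0)$; applied to $\Phi_3$ this reads $\beta_{xp}(0)=\gamma_x(0)\,\alpha_p(0)$, the \emph{product} of the two off-diagonal entries of the $(x,p)$-block of $D\Phi_3(0)$. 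So the condition is a first-order condition on $\Phi_3$, it is a quadric rather than an affine function of the data, and it cannot be reached by the only deformation of $(\lambda,\mu,a,b,c,d)$ that preserves your first-order normalization (the rescaling $\lambda\mapsto t\lambda$, $\mu\mapsto t^{-1}\mu$, $N\mapsto\mathrm{diag}(t^{-1},t)N$, under which $bc$ is invariant); varying $(a,b,c,d)$ in any other way changes the linear part of $\Phi_2$ and hence threatens the normalization of step 1. There is also an internal inconsistency: you list $\alpha_p(0)$ and $\gamma_x(0)$ among the entries "unconstrained by $(\ref{CCOND})$", yet $(\ref{CCOND})$ constrains their product to vanish. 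The repair is easy and makes step 2 unnecessary: since every matrix of $GL_2(\mathbb C)$ factors as $\mathrm{diag}(\lambda,\mu)N$ with $\det N=1$, choose $\Phi_1\Phi_2$ whose linear part has $(x,p)$-block equal to that of $D\Phi(0)$; then the $(x,p)$-block of $D\Phi_3(0)$ is the identity, so $\alpha_x(0)=\alpha_p(0)=\gamma_x(0)=0$ and all of $(\ref{CCOND})$, including $\beta_{xp}(0)=0$, follows from the contact identities. For the last clause one instead takes $\Phi_1$ diagonal and lets $\Phi_3$ absorb the upper-triangular shear, so that $\gamma_x(0)=0$ and again $\beta_{xp}(0)=\alpha_p(0)\cdot 0=0$.

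A secondary point: Theorem \ref{T:RCKT} is stated for \emph{relative} contact transformations satisfying $(\ref{DIDENTITY})$ with $\alpha,\beta,\gamma\in\mathfrak n_S=\mathfrak m_X\mathfrak m_S$; in the absolute case $S$ is a point, $\mathfrak n_S=0$ and $\chi\circ i=i$ forces $\chi=\mathrm{id}$, so parts $(a)$ and $(b)$ cannot be quoted verbatim to conclude that $\beta$ solves $(\ref{E:CAUCHY})$ and that $\gamma$ is determined by $\alpha,\beta$. You need to rederive these from $\Phi_3^*(dy-p\,dx)=\lambda(dy-p\,dx)$ directly; the computation is the same, but it uses that $1+\partial\alpha/\partial x+p\,\partial\alpha/\partial y$ is a unit, which is exactly where the normalization $\partial\alpha/\partial x(0)=0$ from $(\ref{CCOND})$ enters.
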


Let $\Sigma$ be an additive submonoid of the set of non negative integers.
Let $\Sigma_0$ be a minimal set of generators of $\Sigma$.
Let $\mathcal O_\Sigma$ be the set of power series $\sum_ia_it^i$ such that $a_i=0$ if $i\not\in\Sigma$. Let $\mathcal O^*_\Sigma$ be the set of power series $\sum_ia_it^i\in\mathcal O_\Sigma$ such that $a_i\not=0$ if $i\in\Sigma_0$.

\begin{lemma}\label{Wall} \em (Lemma 3.5.4 of \cite{WALL}) \em
Let $\alpha,\beta,\gamma\in\mathbb C\{t\}$. Assume $\alpha(0)\not=0$.

\noindent
$(a)$ If $(t\alpha)^k=t^k\gamma$, $\alpha\in \mathcal O_\Sigma$ if and only if $\gamma\in \mathcal O_\Sigma$ and $\alpha\in \mathcal O^*_\Sigma$ if and only if $\gamma\in \mathcal O^*_\Sigma$.

\noindent
$(b)$ If $t=s\beta(s)$ solves $s=t\alpha(t)$, $\alpha\in \mathcal O_\Sigma$ if and only if $\beta\in \mathcal O_\Sigma$ and $\alpha\in \mathcal O^*_\Sigma$ if and only if $\beta\in \mathcal O^*_\Sigma$.
\end{lemma}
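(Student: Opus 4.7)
The plan is to argue coefficient-by-coefficient in both parts, exploiting two facts: (i) $\Sigma$ is closed under addition, so $\mathcal O_\Sigma$ is a subring of $\C\{t\}$ whose units are precisely the elements with nonzero constant term; and (ii) an element $n\in\Sigma_0$ admits no nontrivial decomposition $n=p+q$ with $p,q\in\Sigma\setminus\{0\}$. Both the $\mathcal O_\Sigma$ and $\mathcal O^*_\Sigma$ equivalences will reduce to the same pattern of induction.

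For (a), the relation $(t\alpha)^k=t^k\gamma$ gives $\gamma=\alpha^k$, so $\alpha\in\mathcal O_\Sigma\Rightarrow\gamma\in\mathcal O_\Sigma$ is immediate from (i). For the converse I would induct on $n$ using
\[
\gamma_n=k\,a_0^{k-1}a_n+\sum_{\substack{i_1+\cdots+i_k=n\\ i_\ell<n\ \forall\ell}}a_{i_1}\cdots a_{i_k}.
\]
By induction every nonzero summand on the right has all $i_\ell\in\Sigma$, and then closure forces $n=\sum i_\ell\in\Sigma$, contradicting $n\notin\Sigma$; hence $\gamma_n=k\,a_0^{k-1}a_n$ and $a_n=0$. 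The $\mathcal O^*_\Sigma$ statement is read off the same identity for $n\in\Sigma_0$: if two of the $i_\ell$ were positive, grouping them would display $n$ as a sum of two positive elements of $\Sigma$, contradicting (ii). So again $\gamma_n=k\,a_0^{k-1}a_n$ and $a_n\neq 0\Leftrightarrow\gamma_n\neq 0$.

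For (b), substituting $t=s\beta(s)$ into $s=t\alpha(t)$ and dividing by $s$ yields $1=\beta(s)\,\alpha(s\beta(s))$, and comparing coefficients of $s^n$ produces $b_0=1/a_0$ together with the recursion
\[
a_0 b_n=-\sum_{i=1}^n a_i\sum_{j_1+\cdots+j_{i+1}=n-i}b_{j_1}\cdots b_{j_{i+1}},\qquad n\ge 1.
\]
Induction on $n$ mirrors (a): every nonvanishing summand needs $i\in\Sigma$ and $j_\ell\in\Sigma$ with $i+\sum_\ell j_\ell=n$, so $n\notin\Sigma$ forces the whole right-hand side to vanish and $b_n=0$. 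For $n\in\Sigma_0$, since $i\ge 1$ is positive and any positive $j_\ell$ would realize $n$ as a sum of two positive elements of $\Sigma$, only $i=n$ and $j_1=\cdots=j_{n+1}=0$ survives, giving $b_n=-a_n b_0^{n+1}/a_0$; with $a_0,b_0\neq 0$ this is a direct proportionality and $a_n\neq 0\Leftrightarrow b_n\neq 0$. Both converses then follow from the built-in symmetry $\alpha\leftrightarrow\beta$ of the defining relation $s=t\alpha\Leftrightarrow t=s\beta$. The principal bookkeeping hurdle is the $\mathcal O^*_\Sigma$ step: one must use minimality of $\Sigma_0$ precisely to rule out every decomposition in the sum other than the trivial one, after which each equivalence drops out of the induction.
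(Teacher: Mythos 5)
Your proof is correct. Note that the paper gives no proof of this statement at all — it is quoted verbatim as Lemma 3.5.4 of Wall's book — so there is no argument to compare against; your coefficient-by-coefficient induction is the standard self-contained verification, and each step checks out: the reduction of (a) to $\gamma=\alpha^k$, the recursion $a_0b_n=-\sum_{i=1}^{n}a_i\sum_{j_1+\cdots+j_{i+1}=n-i}b_{j_1}\cdots b_{j_{i+1}}$ extracted from $1=\beta(s)\,\alpha(s\beta(s))$, the use of the symmetry $s=t\alpha\Leftrightarrow t=s\beta$ for the converse in (b), and the two structural facts you isolate (that $\mathcal O_\Sigma$ is a subring because $\Sigma$ is closed under addition, and that an element of the minimal generating set $\Sigma_0$ of a submonoid of $\mathbb N$ admits no decomposition as a sum of two nonzero elements of $\Sigma$).
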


\begin{theorem}[Theorem $1.3$, \cite{CN}]\label{L:EQUIEQUI}
Let $\chi: (\C^3,0) \to (\C^3,0)$ be a germ of  a contact transformation. Let $L$ be a germ of a Legendrian curve of $\C^3$ at the origin. If $L$ and $\chi(L)$ are in generic position, $\pi(L)$ and $\pi(\chi(L))$ are equisingular.
\end{theorem}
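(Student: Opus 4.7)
\emph{Plan of proof.} By Theorem \ref{TEQUI}, equisingularity of $\pi(L)$ and $\pi(\chi(L))$ amounts to the existence of a bijection between their branches preserving all Puiseux exponents and pairwise contact orders. Since $\chi$ is a biholomorphism, it induces a canonical bijection between the branches of $L$ and of $\chi(L)$, hence of their plane projections, so the task is to check that corresponding branches share Puiseux exponents and that corresponding pairs share contact orders. The overall strategy is to decompose $\chi$ using Theorem \ref{ALLCONTACT} as $\chi = \Phi_1 \Phi_2 \Phi_3$, with $\Phi_1$ of type (\ref{SEMIS}), $\Phi_2$ paraboloidal of type (\ref{PARABOLOIDAL}), and $\Phi_3$ of type (\ref{abc}) satisfying (\ref{CCOND}), and to verify that each factor preserves the relevant invariants on each branch. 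Should an intermediate Legendrian fall out of generic position, we may insert an auxiliary transformation (\ref{CHANGE}) supplied by Lemma \ref{VERYGOODCURVE} to restore generic position and absorb it into a neighbouring factor.

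For $\Phi_1$, the plane projection is the linear diffeomorphism $(x,y) \mapsto (\lambda x, \lambda \mu y)$ of $\C^2$, which trivially preserves Puiseux exponents and contact orders. For $\Phi_3$, write each branch of $L$ as $t \mapsto (x(t), y(t), p(t))$ in generic position with $\mathrm{ord}\, x = k$, $\mathrm{ord}\, y = n \geq 2k$, and $\mathrm{ord}\, p = n - k$; since $\alpha, \beta, \gamma \in (x,y,p)$ by (\ref{CCOND}), the transformed branch has the same leading terms in each slot. Lemma \ref{Wall}(b) then supplies an analytic reparametrization making the new $x$-coordinate equal $s^k$, and direct inspection of the new $y$-coordinate shows that its Puiseux expansion has the same exponents as before, up to unit factors that do not affect the value semigroup. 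The analogous analysis on pairs of branches preserves contact orders.

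The paraboloidal factor $\Phi_2$ is the main obstacle, since it can change the analytic (though not topological) type of the plane projection, as Example \ref{LIE}(b) illustrates. Here $x' = ax + bp$ and $y' = y + \tfrac{ac}{2} x^2 + \tfrac{bd}{2} p^2 + bc\, xp$ with $ad - bc = 1$, and the analysis splits according to Lemma \ref{VERYGOODCURVE} into the cases $n > 2k$ and $n = 2k$. When $n > 2k$ we have $\mathrm{ord}\, p > \mathrm{ord}\, x$, so (assuming $a \neq 0$) the leading behaviour of $x'$ is governed by $ax(t)$, and Lemma \ref{Wall}(a), (b) let us reparametrize $(x', y')$ into canonical form with the same Puiseux exponents as $(x, y)$; the symmetric subcase $a=0$ is handled similarly with $b \neq 0$. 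When $n = 2k$, the equality $\mathrm{ord}\, p = \mathrm{ord}\, x$ permits cancellations in $x'$, and here the second hypothesis of the theorem---that $\chi(L)$ is itself in generic position---must be invoked to force the leading coefficients of $x$ and $p$ to combine in a way compatible with $n' \geq 2k'$ for the output branch, ruling out a drop in multiplicity. Once each branch's value semigroup is shown invariant under $\Phi_2$, the same bookkeeping applied to pairs of branches yields the preservation of contact orders, completing the argument.
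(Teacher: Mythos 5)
Your skeleton matches the paper's: reduce via Theorem \ref{TEQUI} to Puiseux exponents and pairwise contact orders of branches, decompose $\chi=\Phi_1\Phi_2\Phi_3$ by Theorem \ref{ALLCONTACT}, and track value semigroups using (\ref{CCOND}) and Lemma \ref{Wall}. The genuine gap is your treatment of the paraboloidal factor $\Phi_2$ in the case $n=2k$. There you write that genericity of $\chi(L)$ ``must be invoked to force the leading coefficients of $x$ and $p$ to combine in a way compatible with $n'\ge 2k'$''; this is an assertion, not an argument, and it sits exactly where the difficulty is. When $n=2k$ one has $\operatorname{ord}p=\operatorname{ord}x=k$ with $p\sim 2\lambda t^{k}$ for $y\sim\lambda t^{2k}$, and if $a+2b\lambda=0$ then not only does the $t^{k}$-coefficient of $x'=ax+bp$ vanish, but a short computation shows the $t^{2k}$-coefficient of $y'$ vanishes as well (it equals $\lambda d(a+2b\lambda)$), so you cannot read off even the new multiplicity, let alone conclude that the whole value semigroup is unchanged. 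The paper sidesteps this entirely: it first applies $\rho_\lambda$ of type (\ref{CHANGE}) --- whose plane projection $(x,y)\mapsto(x,y-\lambda x^{2}/2)$ is a diffeomorphism and hence harmless for equisingularity --- to move $C_\sigma(L)$ and $C_\sigma(\chi(L))$ both onto $\{y=p=0\}$, after which the last clause of Theorem \ref{ALLCONTACT} gives $\Phi_2=\mathrm{id}$, so the paraboloidal factor never has to be analyzed. You mention inserting (\ref{CHANGE}) only as an afterthought ``to restore generic position''; used systematically at the outset, it is what makes the proof close.

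Two further points. First, ``the same bookkeeping applied to pairs of branches'' does not cover pairs $L_1,L_2$ with $C_\sigma(L_1)\neq C_\sigma(L_2)$, for which no common normalization of parametrizations is available; the paper treats these separately via Lemma \ref{VERYGOODCURVE}(b): such pairs have plane projections with contact order exactly $2$, and since $D\chi(0)$ separates distinct tangent cones the contact order remains $2$ after $\chi$. Second, for $\Phi_3$ your phrase ``the transformed branch has the same leading terms in each slot'' is weaker than what is needed (equal leading terms do not determine the Puiseux exponents); the actual content, which the paper makes precise, is that after applying $\chi$ one still has $x_1=t^{k}\cdot\hbox{unit}$ with $x_1\in\mathcal O_{S'}$ and $y_1\in\mathcal O_S^{*}$, and Lemma \ref{Wall} preserves exactly these memberships under reparametrization. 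You gesture at the value semigroup, so the idea is present, but ``direct inspection'' is standing in for the step that carries the proof.
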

\begin{proof}

Assume $C_\sigma(L)$ is irreducible. 
Since when $\chi=\rho_\lambda$ or $\chi$ is of type (\ref{SEMIS})  
$\pi(L)$ and $\pi(\chi(L))$ are equisingular, we can assume that 
\[
C_\sigma(L)=C_\sigma(\chi (L))=\{y=p=0\}
\]
and $\chi$ is of type (\ref{abc}). Let $L_1, L_2$ be branches of $L$.
Let $S$[$k$] be the semigroup [multiplicity] of $\pi(L_1)$. Let $S'$ be the semigroup generated by $(S_0-k)\cap \mathbb N$. There are parametrizations
\begin{equation}\label{PARAMM}
t\mapsto (x_i(t),y_i(t),p_i(t))
\end{equation} 
of $L_i$, $i=1,2$ such that $x_1(t)=t^k$, $y_1\in \mathcal O_S^*$ and $p_1\in\mathcal O_{S'}$. 
By (\ref{CCOND}) $\chi(L_1)$ admits a parametrizaton (\ref{PARAMM}) with $x_1(t)=t^k\cdot$unit, $x_1\in\mathcal O_{S'}$, $y_1\in\mathcal O^*_{S}$. By Lemma \ref{Wall} we can assume that, after a reparametrization, $x_1(t)=t^k$ and $y_1\in\mathcal O^*_S$. Hence $\pi(L_1)$ and $\pi(\chi(L_1))$ are equisingular.

Assume $\pi(L_i)$ has multiplicity $k_i$, $i=1,2$ and $k$ is the least common multiple of $k_1,k_2$.  
Assume $\pi(L_1)$ and $\pi(L_2)$ have contact of order $\nu$. 
Then we can assume that $x_i(t)=t^{k_ik/k_j},~ \{i,j\}=\{1,2\}$,
\begin{equation}\label{CONTACT}
y_2\equiv y_1 ~\mod \mathcal O_S  ~ \hbox{ and } ~
y_2\not\equiv y_1 ~\mod \mathcal O_{S_+},
\end{equation}
where $S_\ell=\{0\}\cup \ell+\mathbb N$, $S=S_{\nu k}$, 
$S_+=S_{\nu k+1}$ and $S'=S_{\nu k-k}$.
Therefore $p_2\equiv p_1 \mod \mathcal O_{S'}$.
Composing $\chi$ with (\ref{PARAMM}) we obtain a parametrization
(\ref{PARAMM}) of $\chi(L_i)$ such that
\[
x_i=t^k\cdot\hbox{unit, } ~ 
x_2\equiv x_1 \mod \mathcal O_{S'} ~
\hbox{ and } 
y_2\equiv y_1 \mod \mathcal O_{S}, ~ i=1,2.
\]
By Lemma \ref{Wall}, after reparametrization, (\ref{CONTACT}) holds.
The theorem is proven when $C_\sigma(L)$ is irreducible.

Assume there is $\lambda_i$ such that $\pi(L_i)=\{y=\lambda_ix^2\}$, $i=1,2$ and $\lambda_1\not=\lambda_2$. If $\chi$ is paraboloidal, there are $\mu_i$ such that $\pi(\chi(L_i))=\{y=\mu_ix^2\}$, $i=1,2$ and $\mu_1\not=\mu_2$.
By Lemma \ref{VERYGOODCURVE} if $C_\sigma(L_1)\not =C_\sigma(L_2)$, the contact order of $\pi(L_1)$ and $\pi(L_2)$ equals $2$. Hence the truncation of the Puiseux expansion of  $\pi(L_i)$ equals $\lambda_ix^2$, $i=1,2$. Therefore 
the contact order of $\pi(\chi(L_1))$ and $\pi(\chi(L_2))$ equals $2$.
\end{proof}

\begin{definition}\label{EQUILEG}
Two Legendrian curves are \emph{equisingular} if their generic plane projections are equisingular. 
\end{definition}

\begin{lemma}
Assume $Y$ is a generic plane curve and $Y\hookrightarrow\mathcal Y$ defines an equisingular deformation of $Y$ with trivial normal cone along its trivial section. Then $\mathcal Y$ is generic.
\end{lemma}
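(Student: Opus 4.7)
My plan is to pass from the equisingular deformation of the equation to an equisingular deformation of the parametrization via Theorem \ref{EQUIVALENCE}, work branch by branch, and read off genericity from the orders of the branch parametrizations of each fiber. When $S$ is non-reduced I first use Theorem \ref{LIFTEQUI} to lift $\mathcal Y$ to an equisingular deformation $\widetilde{\mathcal Y}$ over $(T_oS,0)$, and choose the lifting so that the trivial-normal-cone condition---the congruence $\widetilde F \equiv y^k \bmod (x,y)^{k+1}$ on a defining function---continues to hold over the smooth base $T_oS$. Showing that $\widetilde{\mathcal Y}$ is generic then implies that $\mathcal Y$ is generic by definition.

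Assume from now on that $S$ is smooth. Write $Y = Y_1 \cup \cdots \cup Y_r$ with $Y_i$ of multiplicity $k_i$. The hypothesis that $Y$ is generic together with Lemma \ref{GOODCURVE}(d) provides parametrizations $\varphi_i(t) = (t^{k_i}, y_i(t))$ with $\mathrm{ord}_t(y_i) = n_i \geq 2k_i$. Theorem \ref{EQUIVALENCE} supplies an equisingular deformation $\Phi$ of $\varphi$ inducing the given deformation of the equation, and restricting to a deformation $\Phi_i$ of each $\varphi_i$. The trivial normal cone hypothesis now becomes essential: it forces $C(\mathcal Y_s) = \{y = 0\}$ for every $s \in S$, so each branch $\mathcal Y_{i,s}$ has tangent cone contained in $\{y = 0\}$ and $\mathrm{ord}_t(x_{i,s}) = k_i$. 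After a unit reparametrization of $t$ (depending on $s$) I may write $\Phi_i(t,s) = (t^{k_i}, y_{i,s}(t))$. Equisingularity of $\Phi_i$ preserves the semigroup of the branch, in particular its first characteristic exponent $n_i / k_i$, so $\mathrm{ord}_t(y_{i,s}) = n_i \geq 2 k_i$ for every $s$. Condition (d) of Lemma \ref{GOODCURVE} applied branch by branch then gives genericity of every $\mathcal Y_s$, hence of $\mathcal Y$.

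The main obstacle to tighten is the first step: producing an equisingular lifting of $\mathcal Y$ to $(T_oS,0)$ whose normal cone along the trivial section is still trivial. Since the condition is only that $\widetilde F$ lies in $(x,y)^k$ with degree-$k$ initial form equal to $y^k$---a constraint on finitely many coefficients of the defining function---it should be straightforward to find liftings satisfying it, and compatibility with the equisingularity requirement from Theorem \ref{LIFTEQUI} should not create an obstruction. The remainder of the argument is essentially bookkeeping using the preservation of the first characteristic exponent under equisingular deformations of the parametrization.
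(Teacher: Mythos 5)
Your route is essentially the paper's (lift to a smooth base, pass to the equisingular deformation of the parametrization, argue branch by branch with $X_i=t^{k_i}$, and conclude via condition $(d)$ of Lemma \ref{GOODCURVE}), but the decisive step is stated incorrectly. Preservation of the first characteristic exponent does \emph{not} give $\mathrm{ord}_{t}(y_{i,s})=n_i$: for $Y$ parametrized by $x=t^2$, $y=t^5$, the deformation $x=t^2$, $y=t^5+st^4$ is equisingular with trivial normal cone along the trivial section (its equation is $(y-sx^2)^2-x^5$, whose initial form is $y^2$), yet $\mathrm{ord}_t(y_s)=4<5=n$. Puiseux characteristic exponents are blind to exponents divisible by $k_i$, so terms $t^{jk_i}$ (i.e.\ powers of $x$) may enter $y_{i,s}$ and lower its order below $n_i$ without changing the equisingularity type.

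The inequality you actually need, $\mathrm{ord}_t(y_{i,s})\ge 2k_i$, is still reachable from the ingredients you have assembled, but only via an argument you do not give: every exponent of $y_{i,s}$ smaller than the first Puiseux exponent (which is preserved and is $\ge n_i\ge 2k_i$) must be a multiple of $k_i$; the trivial normal cone, through your tangent-cone observation, excludes the exponent $k_i$; and there is no multiple of $k_i$ strictly between $k_i$ and $2k_i$. This divisibility point is precisely what the paper's proof makes explicit when it writes the equisingular deformation of the parametrization as $x=t^k$, $y=\sum_i\alpha_it^i$ with $\alpha_i=0$ whenever $i<n$ and $k\nmid i$, and then uses the trivial normal cone to force $\alpha_k=0$. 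As written, your proof replaces this step by the false equality $\mathrm{ord}_t(y_{i,s})=n_i$, so the key inequality is not justified.
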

\begin{proof}
By Proposition \ref{DECOMP} we can assume that $Y$ is irreducible.
Moreover, we can assume that $\mathcal Y$ is a deformation over a vector space and $C_{\{x=y=0\}}(\mathcal Y)=\{ y=0\}$. Let $x=t^k$, $y=t^n+\sum_{i\ge n+1}a_it^i$, $n\ge 2k$ be a parametrization of $Y$. After reparametrization, we can assume that $\mathcal Y$ admits a parametrization of the type
\begin{equation}\label{AJUDAP}
x=t^k, \qquad \textstyle y=\sum_i\alpha_it^i,
\end{equation}
with $\alpha_i\in\mathcal O_S$, $\alpha_i=0$ if $i<n$ and $k$ does note divide $i$. Since the normal cone of $\mathcal Y$ along its section is trivial, $\alpha_k=0$. Since (\ref{AJUDAP}) and
\[
\textstyle p=\sum_i i\alpha_it^{i-k}
\]
define a parametrization of $\mathcal Con(\mathcal Y)$, 
\[
C_{\{x=y=0\}}(\mathcal Con(\mathcal Y))=
\{ y=p-2k\alpha_{2k}x=0\}.
\]
\end{proof}

\begin{definition}\label{DEFLEGDEF}
Let $L$ be (the germ of) a Legendrian curve of $\mathbb C^3$ in generic position.
 Let $\mathcal L$ be a relative Legendrian curve over (a germ of) a complex space $S$ at $o$. We say that an immersion $i:L\hookrightarrow\mathcal L$ defines a \em deformation 
\begin{equation}\label{LEGMAP}
 \mathcal L \hookrightarrow \mathbb C^3\times S \to S
\end{equation}
of the Legendrian curve $L$ over $S$ \em if $i$ induces an isomorphism of $L$ onto $\mathcal L_o$ and there is a generic deformation $\mathcal Y$ of a plane curve $Y$ over $S$  such that
$\chi(\mathcal L)$ is isomorphic to $\mathcal Con \mathcal Y$ by a relative contact transformation verifying (\ref{DIDENTITY}). 

We say that the deformation (\ref{LEGMAP}) is \em equisingular \em if $\mathcal Y$ is equisingular. We denote by $\widehat{\mathcal{D}\textit{ef}}^{\;es}_{\;L}$ the category of equisingular deformations of $L$.
\end{definition}

\begin{remark}

We do not demand the flatness of the morphism (\ref{LEGMAP}). 

\end{remark}

\begin{lemma}
Using the notations of definition $\ref{DEFLEGDEF}$, given a section $\sigma:S\to \mathcal L$ of $\mathbb C^3\times S\to S$, there is a relative contact transformation $\chi$ such that $\chi\circ \sigma$ is trivial. Hence $\mathcal L$ is isomorphic to a deformation with trivial section.
\end{lemma}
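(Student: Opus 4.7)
The plan is to construct, out of $\sigma$, an explicit relative contact transformation $\chi\colon\mathbb C^3\times S\to\mathbb C^3\times S$ that carries $\sigma$ onto the trivial section; the image $\chi(\mathcal L)$ will then be the required deformation with trivial section, isomorphic to $\mathcal L$ via $\chi$. Writing $\sigma$ in coordinates as $\sigma(s)=(a(s),b(s),c(s),s)$ and noting that $\sigma(o)$ must be the distinguished point of the germ $L$, which in generic position is the origin of $\mathbb C^3$, I see that $a,b,c$ lie in $\mathfrak m_S$. My candidate is
\[
\chi(x,y,p,s)=\bigl(x-a(s),\;y-b(s)-c(s)(x-a(s)),\;p-c(s),\;s\bigr).
\]
The motivating observation is that the naive translation $x\mapsto x-a$, $p\mapsto p-c$ would alter the relative contact form $dy-p\,dx$ by an extra $c\,dx$; the $x$-linear correction $-c(x-a)$ added to the $y$-coordinate exactly absorbs this perturbation.

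Verifying that $\chi$ is a relative contact transformation is then a one-line computation in $\Omega^1_{\mathbb C^3|S}$. Setting $X=x-a$, $Y=y-b-c(x-a)$, $P=p-c$, the differentials $da,db,dc$ are multiples of $ds$ and therefore vanish relative to $S$, so $dX=dx$ and $dY=dy-c\,dx$, whence
\[
dY-P\,dX=(dy-c\,dx)-(p-c)\,dx=dy-p\,dx
\]
in $\Omega^1_{\mathbb C^3|S}$. A direct substitution gives $\chi\circ\sigma(s)=(a-a,\,b-b-c\cdot 0,\,c-c,\,s)=(0,0,0,s)$, so $\chi\circ\sigma$ is the trivial section. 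Moreover $a(o)=b(o)=c(o)=0$, hence $\chi$ restricts to the identity on the central fibre, and the composition $L\hookrightarrow\mathcal L\xrightarrow{\chi}\chi(\mathcal L)$ identifies $L$ with the central fibre of $\chi(\mathcal L)$.

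To conclude, I would invoke the earlier lemma that relative contact transformations send relative Legendrian varieties to relative Legendrian varieties, so that $\chi(\mathcal L)$ is again a relative Legendrian curve, hence a deformation of $L$, isomorphic to $\mathcal L$ via $\chi$ and carrying the trivial section $\chi\circ\sigma$. The only step requiring any insight is the choice of the correction term $-c(s)(x-a(s))$ in the $y$-coordinate; once it is identified, the contact computation and the triviality of $\chi\circ\sigma$ fall out immediately, and I do not foresee any serious obstacle.
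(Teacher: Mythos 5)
Your proof is correct and is essentially the paper's argument: the paper performs the same construction in two steps (first the translation $(x,y,p,s)\mapsto(x-\overline{x},y-\overline{y},p,s)$, then $(x,y,p,s)\mapsto(x,y-\overline{p}x,p-\overline{p},s)$), and your single formula is exactly the composite of those two maps. The verification that the relative contact form $dy-p\,dx$ is preserved and that the section becomes trivial matches the paper's computation.
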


\begin{proof}
We can assume that $S$ is the germ at the origin of a vector space. 
Set $\sigma(s)=(\overline{x}(s),\overline{y}(s),\overline{p}(s),s)$. 
Setting $\chi(x,y,p,s)=(x-\overline{x}(s),y-\overline{y}(s),p,s)$, we can assume that $\overline{x},\overline{y}$ vanish. Now 
$\chi(x,y,p,s)=(x,y-\overline{p}(s)x,p-\overline{p}(s),s)$ trivializes $\sigma$.
\end{proof}

\begin{theorem}\label{CONTACTEQUI}
Assume $\mathcal Y$ defines an equisingular deformation of a generic plane curve $Y$ with trivial normal cone along its trivial section. Let $\chi$ be a relative contact transformation verifying $(\ref{DIDENTITY})$. Then $\mathcal Y^\chi=\pi\left(\chi(\mathcal Con \mathcal Y)\right)$ is a generic equisingular deformation of $Y$.
\end{theorem}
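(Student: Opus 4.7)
The plan is to reduce to a fiber-wise argument where Theorem~\ref{L:EQUIEQUI} applies. First I would pass to a smooth base: choose an immersion $S\hookrightarrow(\C^k,0)$, use Theorem~\ref{LIFTEQUI} to lift $\mathcal Y$ to a generic equisingular deformation $\widetilde{\mathcal Y}$ over $(\C^k,0)$, and use Theorem~\ref{T:RCKT}(d) to lift $\chi$ to a relative contact transformation $\widetilde\chi$ over $(\C^k,0)$ still satisfying (\ref{DIDENTITY}) (the defining data $\alpha,\beta_0$ of Theorem~\ref{T:RCKT}(a) lifts componentwise). Since $\widetilde{\mathcal Y}^{\widetilde\chi}$ restricts to $\mathcal Y^\chi$, a second application of Theorem~\ref{LIFTEQUI} reduces the problem to the case where $S$ is smooth, hence reduced.

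The key step is to verify that $\mathcal Y^\chi$ is a generic flat deformation of $Y$. Because $\chi$ satisfies (\ref{DIDENTITY}), formula (\ref{abc}) gives $\chi_0=\mathrm{id}$, so the central fiber of $\mathcal Y^\chi$ is $Y$. Parametrize each branch of $\mathcal Con\mathcal Y$ as a family $t\mapsto(x(t,s),y(t,s),p(t,s))$, compose with $\chi$, and project to the first two coordinates; by the remark following Theorem~\ref{EQUIVALENCE} this yields a parametrization of a hypersurface of $\C^2\times S$ that defines a flat deformation of $Y$, provided the tangent cone remains $\{y=0\}$ fiberwise. Writing $L_s=\mathcal Con(\mathcal Y_s)$, the generic hypothesis on $\mathcal Y$ says each $L_s$ is in generic position; moreover, since $\alpha,\beta,\gamma\in\mathfrak n_S$ vanish along $\{s=0\}$, the differential $D\chi_s(0)$ tends to the identity as $s\to 0$. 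Genericity condition~(b) of Lemma~\ref{GOODCURVE} is an open condition on tangent cones, so $\chi_s(L_s)$ stays in generic position for $s$ close to $0$ and the multiplicity $\mathrm{mult}_o(\mathcal Y^\chi)_s=\mathrm{mult}_o Y$ is constant. Theorem~\ref{T:RCONORMAL} then identifies the conormal of $\mathcal Y^\chi$ with $\chi(\mathcal Con\mathcal Y)$ and confirms that $\mathcal Y^\chi$ is generic.

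Finally, to obtain equisingularity I would apply Theorem~\ref{L:EQUIEQUI} fiber by fiber: since both $L_s$ and $\chi_s(L_s)$ are Legendrian germs in generic position, the plane curves $\mathcal Y_s=\pi(L_s)$ and $(\mathcal Y^\chi)_s=\pi(\chi_s(L_s))$ are equisingular. Combined with the equisingularity of $\mathcal Y$ over $S$, every fiber of $\mathcal Y^\chi$ then has the topological type of $Y$; over the reduced base $S$ this constancy of topological type is exactly the criterion (Corollary~2.68 of \cite{GLS}, cited just after Theorem~\ref{EQUIVALENCE}) that forces $\mathcal Y^\chi$ to be equisingular. The genuinely delicate step is the second paragraph: ensuring that the generic position of $L_s$ survives under $\chi_s$ for every $s$, which is where the restrictive form of $\chi$ imposed by (\ref{DIDENTITY}), namely $\alpha,\beta,\gamma\in\mathfrak n_S$, is essential.
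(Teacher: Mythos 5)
Your proof is correct and follows the same overall strategy as the paper's: reduce to a smooth (hence reduced) base, verify fiberwise that $(\mathcal Y^\chi)_s$ is generic for small $s$, and then obtain equisingularity by applying Theorem \ref{L:EQUIEQUI} fiber by fiber. The one place where you genuinely diverge is the genericity step, which is also the point you rightly flag as delicate. The paper decomposes each $\chi_s$ as $\Phi_1\Phi_2\Phi_3$ via Theorem \ref{ALLCONTACT} (this is needed because, for fixed $s\neq 0$, $\chi_s$ need not satisfy the normalization (\ref{CCOND}), so a linear and a paraboloidal factor must be split off) and then checks on an explicit parametrization of each conormal branch that $(x)=(t^k)$ and $y\in(t^{2k})$ survive each factor when $s$ is small; this verifies condition (d) of Lemma \ref{GOODCURVE} directly. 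You instead verify condition (b): since $D\chi_s(0)\to\mathrm{id}$, the image tangent cone $D\chi_s(0)\bigl(C_\sigma(L_s)\bigr)$ cannot contain the fiber $(D\pi(\sigma))^{-1}(0,0)$. This softer argument is legitimate, but to close it you should record that the tangent cone of $L_s$ consists of lines $\{y=p-c(s)x=0\}$ with $c(s)$ holomorphic, hence bounded, in $s$ — this is precisely where the trivial-normal-cone hypothesis is used (cf.\ the unnamed lemma preceding Definition \ref{DEFLEGDEF}, whose proof computes $C_{\{x=y=0\}}(\mathcal Con(\mathcal Y))=\{y=p-2k\alpha_{2k}x=0\}$ with $\alpha_{2k}\in\mathcal O_S$); without this uniformity "openness" alone does not suffice, whereas the paper's explicit computation absorbs it automatically. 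The remaining ingredients of your write-up (lifting to a smooth base via Theorems \ref{LIFTEQUI} and \ref{T:RCKT}, identification of the central fiber from (\ref{DIDENTITY}), the appeal to Theorem \ref{T:RCONORMAL}) simply make explicit reductions the paper leaves implicit.
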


\begin{proof}
We can assume that $S$ is the germ of a vector space. We only have to prove that $(i)$ $\mathcal (\mathcal Y^\chi)_s$ is generic and $(ii)$ $\mathcal (\mathcal Y^\chi)_s$ are equisingular, for small enough $s$. Let $\mathcal (\mathcal Y^\chi)_{s,i}$ be one branch of $\mathcal (\mathcal Y^\chi)_s$. Since $\mathcal (\mathcal Y^\chi)_{s,i}$ is generic its conormal admits a parametrization 
\[
\psi (t)=(t^k,t^n+h.o.t., (n/k)t^{n-k}+h.o.t.),
\]
with $n \geq 2k$ (see Lemma \ref{GOODCURVE}). By Theorem \ref{ALLCONTACT},  $\chi_s=\Phi_1\Phi_2\Phi_3$. Since $\Phi_1$ preserves genericity, we can assume $\Phi_1=id$. Notice that $\mathcal (\mathcal Y_{s,i})^{\Phi_2}$ is parametrized by
\begin{equation}\label{PPPA}
t \mapsto (x(t),y(t)),
\end{equation}
where $x(t)=at^k+b(n/k)t^{n-k}+h.o.t.$ and $y \in (t^{2k})$. If $s$ is small enough we can assume $a$ close to $1$ and $b$ close to $0$. Hence $(x)=(t^k)$. Therefore we can assume $\Phi_2=id$. Finally $\mathcal (\mathcal Y_{s,i})^{\Phi_3}$ is parametrized by (\ref{PPPA}), with
\[
x(t)=t^k+\psi^\ast(\alpha),\; y(t)=t^n+\psi^\ast(\beta).
\]
By $(\ref{CCOND})$ $(x)=(t^k)$ and $y \in (t^{2k})$ for small $s$. Now $(ii)$ follows from Theorem \ref{L:EQUIEQUI}, for $s$ small enough.
\end{proof}

\section{Deformations of the parametrization}\label{SECPARA}

Let $\psi: \bar{\mathbb C} \to \mathbb C^3$ be the parametrization  of a Legendrian curve $L$. We say that a deformation $\Psi$ of $\psi$ is a \emph{Legendrian deformation of $\psi$} if the analytic set parametrized by $\Psi$ is a relative Legendrian curve.
 We say that $(\chi,\xi)$ is an isomorphism of Legendrian deformations if $\chi : \C^3 \times T \to \C^3 \times T$ is a relative contact transformation (see (\ref{BIGDIAGRAM})).

\begin{definition}\label{DEFLEGDEF2}
Let $\varphi: \bar{\mathbb C} \to \mathbb C^2$ be the parametrization of a generic plane curve $Y$ with tangent cone $\{y=0\}$.
Let ${\mathcal{D}\textit{ef}^{\;es}_{\,\varphi}}$ be the category of equisingular deformations of $\varphi$. Let $\mathcal Y$ be an object of ${\mathcal{D}\textit{ef}^{\;es}_{\,\varphi}}$.
We say that $\mathcal Y$ is an object of the full subcategory 
$\overset{\twoheadrightarrow}{\mathcal{D}\textit{ef}^{\;es}_{\,\varphi}}$   of 
${\mathcal{D}\textit{ef}^{\;es}_{\,\varphi}}$ if $\mathcal Y$ is generic and
the normal cone of $\mathcal Y$ along $\{x=y=0\}$ equals $\{y=0\}$. 

Let $\psi: \bar{\mathbb C} \to \mathbb C^3$ be the parametrization of a curve $L$ in generic position.
We will denote by $\widehat{\mathcal{D}\textit{ef}}_{\,\psi}^{\;es}$ 
the category of equisingular  Legendrian  deformations of $\psi$.
\end{definition}

\begin{theorem}\label{FIXEDCONEGREUEL}
Let $\varphi: \bar{\mathbb C} \to \mathbb C^2$ be the parametrization of a generic plane curve $Y$ with tangent cone $\{y=0\}$. Then a semiuniversal deformation of $\varphi$ in ${\mathcal{D}\textit{ef}^{\;es}_{\,\varphi}}$ is also a semiuniversal deformation in $\overset{\twoheadrightarrow}{\mathcal{D}\textit{ef}^{\;es}_{\,\varphi}}$.
\end{theorem}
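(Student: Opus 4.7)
The strategy is to prove that the inclusion
\[
\overset{\twoheadrightarrow}{\mathcal{D}\textit{ef}^{\;es}_{\,\varphi}} \hookrightarrow \mathcal{D}\textit{ef}^{\;es}_{\,\varphi}
\]
is an equivalence of categories. Fullness being automatic from the definition, the task reduces to essential surjectivity: every equisingular deformation of $\varphi$ is isomorphic, in $\mathcal{D}\textit{ef}^{\;es}_{\,\varphi}$, to a deformation lying in $\overset{\twoheadrightarrow}{\mathcal{D}\textit{ef}^{\;es}_{\,\varphi}}$. Granted this, a semiuniversal object $\mathcal Y^*$ of the larger category admits an isomorphic representative inside the subcategory, and that representative is semiuniversal there, since both the versal property and the uniqueness of the tangent map descend from the ambient category.

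For essential surjectivity I would proceed in two steps. First, I would check that every equisingular deformation $\mathcal Y/S$ of a generic $\varphi$ is itself generic: by Theorem \ref{LIFTEQUI} one lifts $\mathcal Y$ to an equisingular deformation over the smooth (hence reduced) base $T_oS$, so it suffices to treat a reduced $S$; since equisingular deformations preserve the Puiseux exponents of every branch (Theorem \ref{TEQUI}), every fiber has the same characteristic data as $Y$ and so satisfies the genericity criterion $n\geq 2k$ of Lemma \ref{GOODCURVE}. Second, I would arrange the normal-cone condition. The tangent cone of $\mathcal Y_s$ at the origin is a line whose direction depends holomorphically on $s$ and coincides with $\{y=0\}$ at $s=0$, so a coordinate change of the form
\[
(x,y,s) \longmapsto (x,\,y - h(x,s),\,s), \qquad h(x,0)\equiv 0,\quad h(0,s)\equiv 0,
\]
with $h$ chosen to kill the cross terms in the $(x,y)$-initial form of a defining equation of $\mathcal Y$, aligns all the tangent cones of the fibers to $\{y=0\}$ simultaneously. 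This is an admissible isomorphism in $\mathcal{D}\textit{ef}^{\;es}_{\,\varphi}$ since it fixes the trivial section and restricts to the identity on $\mathbb C^2\times\{o\}$.

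With essential surjectivity in place the conclusion becomes formal. Given $\mathcal Y^*/S^*$ semiuniversal in $\mathcal{D}\textit{ef}^{\;es}_{\,\varphi}$, I replace it up to isomorphism by an object of $\overset{\twoheadrightarrow}{\mathcal{D}\textit{ef}^{\;es}_{\,\varphi}}$. For any versality datum consisting of a closed embedding $T'' \hookrightarrow T'$, a morphism $T''\to S^*$, and $\mathcal Y'/T'$ in $\overset{\twoheadrightarrow}{\mathcal{D}\textit{ef}^{\;es}_{\,\varphi}}$ compatible with the previous data, semiuniversality in the ambient category produces $h:T'\to S^*$ together with an isomorphism $h^*\mathcal Y^* \cong \mathcal Y'$. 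The tangent-cone condition along the section is preserved by base change, so $h^*\mathcal Y^*$ lies in $\overset{\twoheadrightarrow}{\mathcal{D}\textit{ef}^{\;es}_{\,\varphi}}$; fullness of the subcategory then makes the isomorphism an isomorphism in $\overset{\twoheadrightarrow}{\mathcal{D}\textit{ef}^{\;es}_{\,\varphi}}$, and the uniqueness of $T(h)$ is inherited from the ambient category.

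The main obstacle will be the simultaneous straightening of the tangent cones of the fibers of $\mathcal Y$ over a possibly non-reduced $S$: the coordinate change must be holomorphic in $s$, reduce to the identity at $s=0$, and kill \emph{all} lower-order cross terms in $(x,y)$ uniformly. I would handle this by first working on a lifting to $T_oS$ (which is smooth) and exploiting that the necessary correction is triangular in $y$ with coefficients in $\mathfrak m_S$, so it assembles into an honest automorphism of $\mathbb C^2\times S$ preserving the trivial section and descending to $S$.
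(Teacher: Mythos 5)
Your proposal is correct in outline but takes a genuinely different route from the paper. The paper argues directly on a concrete object: it takes the standard semiuniversal equisingular deformation of the parametrization along the trivial section furnished by Theorem 2.38 of \cite{GLS}, with $X_i=x_i+\sum_j a_i^js_j$, $Y_i=y_i+\sum_j b_i^js_j$ for representatives of a basis of $T^{1,es}_{\varphi}$, and checks by a short order computation that this particular deformation already lies in $\overset{\twoheadrightarrow}{\mathcal{D}\textit{ef}^{\;es}_{\,\varphi}}$: equimultiplicity gives $X_i,Y_i\in(t_i^{m_i})$, and equimultiplicity of the blown-up family $t_i\mapsto (X_i,Y_i/X_i)$ together with $y_i\in(t_i^{2m_i})$ forces $Y_i\in(t_i^{2m_i})$, which is exactly genericity plus triviality of the normal cone; semiuniversality in the full subcategory is then immediate. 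You instead prove the stronger statement that \emph{every} object of ${\mathcal{D}\textit{ef}^{\;es}_{\,\varphi}}$ is isomorphic to one in the subcategory and transfer semiuniversality formally; this is viable, buys a cleaner categorical picture, but costs more bookkeeping, and two of your steps need shoring up. Genericity of the fibres is not a formal consequence of Theorem \ref{TEQUI}: the order of $y$ relative to the fibre's own (possibly rotated) tangent is not characteristic data, and you need the extra remark that no multiple of $k$ lies strictly between $k$ and $2k$ while the first characteristic exponent of a generic branch exceeds $2k$, so $\mathrm{ord}\,y\ge 2k$ persists. Likewise the simultaneous straightening needs that all branches of each fibre keep a common tangent line (preservation of the pairwise contact orders, all $>1$ at $s=0$), after which $h(x,s)=c(s)x$ with $c(0)=0$ suffices, the non-reduced case being handled by lifting as you indicate. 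Finally, your argument proves the theorem only up to replacing the semiuniversal deformation by an isomorphic representative inside the subcategory, whereas the paper's computation shows that the standard deformation of \cite{GLS} itself lies there, which is the form exploited later in the paper.
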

\begin{proof}
Assume $\varphi_i(t_i)=(x_i(t_i),y_i(t_i))$, $i=1,\ldots,r$. Let $I^{es}_\varphi$ be the vector space of the $a\partial_x+b\partial_y$ such that $a=[a_1,\ldots,a_r]^t$,  $b=[b_1,\ldots,b_r]^t$, where $a_i,b_i \in \C\{t_i\}t_i$ and 
\[
t_i \mapsto (x_i(t_i)+\ep a_i(t_i),y_i(t_i)+\ep b_i(t_i)),
\]
$i=1,\ldots,r$, is an equisingular deformation of $\varphi$ along the trivial section over $T_\ep$. Let $T_{\varphi}^{1,es}$ be the quotient of $I^{es}_\varphi$ by the linear subspace of its elements that define trivial deformations. Let 
\[
a^j\partial_x+b^j\partial_y, \qquad j=1,\ldots,\ell,
\]
be a family of representatives of a basis of $T_{\varphi}^{1,es}$. Set
\[
X_i=x_i+\textstyle \sum_{j=1}^\ell a_i^j s_j, \; Y_i=y_i+\textstyle \sum_{j=1}^\ell b_i^j s_j
\]
$i=1, \ldots,r$. By Theorem $2.38$ of \cite{GLS},
\[
\Phi_i(t_i)=(X_i(t_i), Y_i(t_i), \qquad i=1,\ldots,r,
\]
defines a semiuniversal deformation of $\varphi$ in  ${\mathcal{D}\textit{ef}^{\;es}_{\,\varphi}}$. It is enough to show that $\Phi_i$, $i=1, \ldots,r$ is an element of $\overset{\twoheadrightarrow}{\mathcal{D}\textit{ef}^{\;es}_{\,\varphi}}$. Let $m_i$ be the multiplicity of $\Phi_i$. Then $(x_i)=(t_i^{m_i})$. Since $\Phi_i$ is equimultiple $X_i, Y_i \in (t_i^{m_i})$. Since $y_i \in (t_i^{2m_i})$ and $\Phi_i$ is equisingular
\[
t_i \mapsto (X_i(t_i),Y_i(t_i)/X_i(t_i))
\]
is equimultiple (see \cite{GLS}). Therefore $Y_i \in (t_i^{2m_i})$.
\end{proof}

Assume $\psi$ is a parametrization of the conormal of the curve parametrized by $\varphi$. 
Let $\Phi[\Psi]$ be the deformation [Legendrian deformation] of $\varphi[\psi]$ given by
\[
\Phi_i(t_i,s)=(X_i(t_i,s),Y_i(t_i,s)),
\qquad
[\Psi_i(t_i,s)=(X_i(t_i,s),Y_i(t_i,s),P_i(t_i,s))].
\]
There are functors
$\mathcal{C}on:  \overset{\twoheadrightarrow}{\mathcal{D}\textit{ef}^{\;es}_{\,\varphi}} \to  \widehat{\mathcal{D}\textit{ef}}_{\,\psi}^{\;es}$,
$\pi:  \widehat{\mathcal{D}\textit{ef}}_{\,\psi}^{\;es} \to \overset{\twoheadrightarrow}{\mathcal{D}\textit{ef}^{\;es}_{\,\varphi}}$
given by 
\[
(\mathcal Con\Phi)_i=\left(X_i,Y_i,\frac{\partial Y_i}{\partial t}\left(\frac{\partial X_i}{\partial t}\right)^{-1} \right),
\qquad
(\Psi^\pi)_i=(X_i,Y_i).
\]

\begin{example}\label{EX}
Let $\Phi$ be the deformation $x=t^3$, $y=t^{10}+st^{11}$ 
of the plane curve $Y$ given by the equation $y^3-x^{10}$ 
and parametrized by $x=t^3$, $y=t^{10}$.
The deformation $\Phi$ induces the flat deformation given by 
\[
y^3-x^{10}-3sx^7y-s^3x^{11}.
\]
The conormal $\Psi$ of $\Phi$ is given by $x=t^3$, $y=t^{10}+st^{11}$, $3p=10t^{7}+11st^{8}$.

The  semigroup of the conormal curve of $\{y^3-x^{10}=0\}$ equals $\{3,6,7,9,10\} \cup \mathbb N +12$. The semigroup of the conormal of the deformed curve also contains the number $11$. Hence the deformation is not flat (see \cite{BG}).

It is shown in \cite{CN} that each flat deformation of the conormal of $y^k-x^n=0$ is rigid.
This result shows that the obvious choice of a definition of deformation of a Legendrian variety is not a very good one.
This is the reason to introduce Definitions \ref{DEFLEGDEF} and \ref{DEFLEGDEF2}.
\end{example}

\begin{definition}
Let $\mathcal{D}\textit{ef}^{\;es,\mu}_{\,\varphi}$ 
be the category given in the following way:
the objects of $\mathcal{D}\textit{ef}^{\;es,\mu}_{\,\varphi}$
are the objects of $\overset{\twoheadrightarrow}{\mathcal{D}\textit{ef}^{\;es}_{\,\varphi}}$;
the morphisms of $\mathcal{D}\textit{ef}^{\;es,\mu}_{\,\varphi}$
are the pairs $(\chi,\xi)$ where $\chi:\mathbb C^3\times T \to\mathbb C^3\times T $ is a relative contact transformation that acts on a deformation $\Phi$ by
\[
(\chi\cdot\Phi)_i=(\chi\circ\mathcal Con\Phi_i)^\pi,
\]
and leaves invariant the normal cone along $\{x=y=0\}$ of the image of $\Phi$.
\end{definition}

Notice that, by Theorem \ref{CONTACTEQUI} $\chi\cdot\Phi$ defined above is in fact an object of $\mathcal{D}\textit{ef}^{\;es,\mu}_{\,\varphi}$.

 Let $\mathfrak C_\varphi$ be a category of deformations of a curve parametrized by $\varphi$.
Let $S$ be a complex space. 
We will denote by $\mathfrak C_\varphi(S)$ the category of deformations of $\mathfrak C_\varphi$ over $S$.
We will denote by $\underline{\mathfrak C}_\varphi(S)$ the set of isomorphism classes of objects of 
$\mathfrak C_\varphi(S)$.

The functors 
$\mathcal{C}on:  \mathcal{D}\textit{ef}^{\;es,\mu}_{\,\varphi} \to  \widehat{\mathcal{D}\textit{ef}}_{\,\psi}^{\;es}$,
$\pi:  \widehat{\mathcal{D}\textit{ef}}_{\,\psi}^{\;es} \to \mathcal{D}\textit{ef}^{\;es,\mu}_{\,\varphi}$
are surjective and define natural equivalences between the functors
\[
T \mapsto \underline{\mathcal{D}\textit{ef}}^{\;es,\mu}_{\,\varphi}(T)
\qquad \hbox{and} \qquad
T \mapsto \widehat{\underline{\mathcal{D}\textit{ef}}}_{\,\psi}^{\;es}(T).
\]

\medskip

Let $\varphi:\overline{\mathbb C}\to \mathbb C^2$ be a parametrization of a generic plane curve $Y$ 
with irreducible components $Y_1,...,Y_r$.
Assume $\varphi_i(t)=(x_i(t_i),y_i(t_i))$, $i=1,...,r$. 

We will identify each ideal of $\mathcal{O}_Y$ with its image by $\vf^\ast : \mathcal{O}_Y \to \mathcal{O}_{\bar{\C}}$:
\[
\mathcal{O}_Y = \C \left\{ 
[x_1 \hdots x_r]^t
,
[y_1 \hdots y_r]^t
\right\}
 \subset \oplus_{i=1}^r \C\{t_i\}=\mathcal{O}_{\bar{\C}}.
\]	
Set $\dot{\mathbf{x}}=\left[ \dot{x}_1,\ldots,\dot{x}_r\right]^t$, where $\dot{x}_i$ is the derivative of $x_i$ with respect to $t_i$, $1\leq i \leq r$. Let
$\dot{\varphi} := \dot{\mathbf{x}} \partial_x +  \dot{\mathbf{y}}\partial_y$ 
be an element of the free $\mathcal{O}_{\bar{\C}}$-module
$\mathcal{O}_{\bar{\C}}{\partial}_x \oplus \mathcal{O}_{\bar{\C}}{\partial}_y$,
which has a structure of $\mathcal{O}_Y$-module induced by $\vf^\ast$.

Let $u_1,...,u_r,v_1,...,v_r\in\mathbb C\{t_i\}$. We say that
\[
(u_1,...,u_r)\partial_x\oplus (v_1,...,v_r)\partial_y
\]
belongs to the \emph{equisingularity module} $\Sigma_\varphi^{es}$ (see \cite{GLS}) of $\varphi$ if the deformation $\Phi$ given by 
$\Phi_i(t_i,\ep)=(x_i(t_i)+\ep u_i(t_i), y_i(t_i)+\ep v_i(t_i))$ is equisingular and has trivial normal cone along its trivial section.

Let $\mathfrak{m}_{\bar{\C}}\dot{\vf}$ be the sub $\mathcal{O}_{\bar{\C}}$-module of $\Sigma_\varphi^{es}$ generated by
\[
(a_1,\ldots,a_r)\left( \dot{\mathbf{x}}\partial_x +  \dot{\mathbf{y}}\partial_y\right),
\qquad 
a_i \in t_i\C\{t_i\},
\quad
1 \leq i \leq r.
\]
For $i=1,\ldots,r$ set $p_i=\dot{y}_i / \dot{x}_i$. For each $k \geq 0$ set
$\mathbf{p}^k = \left[ p_1^k,\ldots,p_r^k\right]^t$.
Let $ \widehat{I}$ be the sub $\mathcal{O}_Y$-module of 
$\mathcal{O}_{\bar{\C}}{\partial}_x \oplus \mathcal{O}_{\bar{\C}}{\partial}_y$  generated by
$(k+1)\mathbf{p}^k  \partial_x + k\mathbf{p}^{k+1}{\partial_y}$,  $k \geq 1$.

\begin{theorem} The module $ \widehat{I}$ is contained in $\Sigma_\varphi^{es}$ and
\[
\underline{\mathcal{D}\textit{ef}}^{\;es,\mu}_{\,\varphi}(T_\ep )
\simeq
\Sigma_\varphi^{es}/ 
(\mathfrak{m}_{\bar{\C}}\dot{\vf} + (x,y){\partial_x} \oplus (x^2,y){\partial_y} + \widehat{I}).
\]
\end{theorem}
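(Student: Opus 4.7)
The plan is to identify first-order objects of $\mathcal{D}\textit{ef}^{\;es,\mu}_{\,\varphi}(T_\varepsilon)$ with $\Sigma^{es}_\varphi$ via Theorem \ref{FIXEDCONEGREUEL} (and the natural equivalence with $\widehat{\mathcal{D}\textit{ef}}^{\;es}_{\,\psi}$), and then to compute the module of infinitesimally trivial deformations as the sum of contributions from reparametrizations of $\bar{\mathbb C}$ and from relative contact transformations on $(\mathbb C^3, 0)$ verifying (\ref{DIDENTITY}) that preserve the trivial normal cone.

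To verify $\widehat{I} \subset \Sigma^{es}_\varphi$, for each $k \ge 1$ I apply Theorem \ref{T:RCKT}(c) over $T_\varepsilon$ with $\alpha = \varepsilon p^k$ and $\beta_0 = 0$; the Cauchy problem (\ref{E:CAUCHYTEBETA}) yields $\beta = \tfrac{\varepsilon k}{k+1}\, p^{k+1}$. The resulting relative contact transformation $\chi$, composed with the parametrization $\psi_i = (x_i, y_i, p_i)$ of the conormal and projected to the plane, produces the first-order deformation of $\varphi$ represented by $\mathbf{p}^k \partial_x + \tfrac{k}{k+1}\, \mathbf{p}^{k+1} \partial_y$, a non-zero scalar multiple of the $k$-th generator of $\widehat I$. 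Theorem \ref{CONTACTEQUI} certifies this deformation is equisingular, generic and of trivial normal cone, so its class lies in $\Sigma^{es}_\varphi$.

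By Theorem \ref{FIXEDCONEGREUEL}, every object of $\mathcal{D}\textit{ef}^{\;es,\mu}_{\,\varphi}(T_\varepsilon)$ is represented by $(x_i + \varepsilon u_i, y_i + \varepsilon v_i)$ with $u \partial_x + v \partial_y \in \Sigma^{es}_\varphi$; two such are isomorphic iff they differ by a pair $(\chi, \xi)$ with $\xi$ an infinitesimal reparametrization and $\chi$ a relative contact transformation verifying (\ref{DIDENTITY}) that preserves the normal cone $\{y=0\}$ along the trivial section. The reparametrization $\xi$ contributes exactly $\mathfrak{m}_{\bar{\mathbb C}} \dot\varphi$. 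For $\chi$, I would decompose it via Theorem \ref{ALLCONTACT} into semisimple, paraboloidal, and (\ref{abc}) factors; the semisimple and paraboloidal factors, at first order, contribute into $(x,y)\partial_x \oplus (x^2, y)\partial_y + \widehat I$ (the paraboloidal subgroup generated by (\ref{PARABOLOIDAL}) with infinitesimal $b$-parameter yielding exactly the $k=1$ generator of $\widehat I$). The (\ref{abc})-factor is parametrized via Theorem \ref{T:RCKT}(c) by $(\tilde\alpha, \tilde\beta_0)$ with $\tilde\alpha = \sum_{k \ge 0} \tilde\alpha_k(x,y) p^k \in (x,y,p)$ and $\tilde\beta_0 \in (x, y)$; solving (\ref{E:CAUCHYTEBETA}) and composing with $\psi_i$ shows that the $k=0$ summand gives $\tilde\alpha_0(x_i,y_i) \partial_x + \tilde\beta_0(x_i,y_i) \partial_y$ and each $k \geq 1$ summand gives $\tfrac{1}{k+1}\tilde\alpha_k(x_i, y_i)$ times the $k$-th generator of $\widehat I$.

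The main obstacle is enforcing the normal-cone constraint: as in the proof of Theorem \ref{FIXEDCONEGREUEL}, preservation of the trivial normal cone along the trivial section requires $v_i \in (t_i^{2 m_i})$ for each branch, which forces $\tilde\beta_0 \in (x^2, y)$ (a residual $bx$-term in $\tilde\beta_0$ would introduce $\varepsilon b\, t_i^{m_i}$ into $v_i$, contradicting this order of vanishing). Summing the four sources identifies the module of trivialities as $\mathfrak{m}_{\bar{\mathbb C}} \dot\varphi + (x, y) \partial_x \oplus (x^2, y) \partial_y + \widehat I$, from which the claimed isomorphism follows.
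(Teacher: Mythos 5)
Your proposal is correct and follows essentially the same route as the paper: membership of $\widehat I$ in $\Sigma_\varphi^{es}$ via the explicit relative contact transformations $\chi_{\ep p^k,0}$ acting on the trivial deformation, and identification of the infinitesimally trivial deformations as reparametrizations (giving $\mathfrak m_{\bar\C}\dot\vf$) plus first-order contact transformations, with the Cauchy problem of Theorem \ref{T:RCKT}(c) producing the $\widehat I$-terms and the normal-cone condition cutting the $\partial_y$-contribution down from $(x,y)$ to $(x^2,y)$. The only cosmetic difference is your detour through Theorem \ref{ALLCONTACT}: over $T_\ep$ a morphism of deformations must restrict to the identity on the special fibre, so the trivializing $\chi$ is automatically of type (\ref{abc}) and the semisimple and paraboloidal factors need not be treated separately.
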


\begin{proof}
Let $(u_1,...,u_r)\partial_x + (v_1,...,v_r)\partial_y \in  \widehat{I}$ and $\Phi$ be the deformation  given by 
\begin{equation}\label{E:DEF}
\Phi_i(t_i,\ep)=(x_i(t_i)+\ep u_i(t_i), y_i(t_i)+\ep v_i(t_i)).
\end{equation}
 We can suppose that for each $i=1,\ldots,r$
\[
u_i=p_i^\ell, \; v_i=\frac{\ell}{\ell+1}p_i^{\ell+1}
\]
for some $\ell \geq 1$. Because $Y$ is generic we have that $ord_{t_i} \, p_i > ord_{t_i} \, x_i$, $2ord_{t_i} \, p_i > ord_{t_i} \, y_i$ and, by Lemma  \ref{GOODCURVE}, $\Phi$ has generic fibres. The deformation $\Phi$ is the result of the action over the trivial deformation of $Y$ of the relative contact transformation
\[
\chi(x,y,p,\ep)=(x+\ep p^\ell,y+\ep \frac{\ell}{\ell+1}p^{\ell+1},p,\ep).
\]
As the trivial deformation is equisingular, $\Phi$ is equisingular.

Let $\Phi \in  \mathcal{D}\textit{ef}^{\;es,\mu}_{\,\varphi}$ be given as in (\ref{E:DEF}),
where $u_i,v_i \in \C\{t_i\},\, ord_{t_i}\,u_i \geq m_i,\, ord_{t_i} \, v_i \geq 2m_i,\, i=1, \ldots,r$, where $m_i$ is the multiplicity of $Y_i$. We have that $\Phi$ is trivial if and only if there are 
\begin{align*}
\xi_i(t_i)&=\widetilde{t}_i=t_i + \ep h_i,\\
\chi(x,y,p,\ep)&=(x+\ep \alpha,y + \ep \beta, p+ \ep \gamma,\ep),
\end{align*}
such that $\chi$ is a relative contact transformation, $\xi_i$ is an isomorphism, $\alpha,\beta, \gamma \in (x,y,p)\C\{x,y,p\},\, h_i \in t_i\C\{t_i\},\, 1 \leq i \leq r$, and
\begin{align*}
x_i(t_i) + \ep u_i(t_i)&=x_i(\widetilde{t}_i)+\ep \alpha(x_i(\widetilde{t}_i),y_i(\widetilde{t}_i),p_i(\widetilde{t}_i)),\\
y_i(t_i) + \ep v_i(t_i)&=y_i(\widetilde{t}_i)+\ep \beta(x_i(\widetilde{t}_i),y_i(\widetilde{t}_i),p_i(\widetilde{t}_i)),
\end{align*}
for $i=1,\ldots,r$. By Taylor's formula $x_i(\widetilde{t}_i)=x_i(t_i) + \ep \dot{x_i}(t_i)h_i(t_i), \, y_i(\widetilde{t}_i)=y_i(t_i) + \ep \dot{y_i}(t_i)h_i(t_i)$ and 
\begin{align*}
\ep \alpha(x_i(\widetilde{t}_i),y_i(\widetilde{t}_i),p_i(\widetilde{t}_i))&=\ep \alpha(x_i(t_i),y_i(t_i),p_i(t_i)), \\ 
\ep \beta(x_i(\widetilde{t}_i),y_i(\widetilde{t}_i),p_i(\widetilde{t}_i))&=\ep \beta(x_i(t_i),y_i(t_i),p_i(t_i)),
\end{align*}
for $i=1,\ldots,r$. Hence $\Phi$ is trivialized by $\chi$ if and only if
\begin{align}
u_i(t_i)&=\dot{x_i}(t_i)h_i(t_i) + \alpha(x_i(t_i),y_i(t_i),p_i(t_i)), \label{E:AAAA} \\
v_i(t_i)&=\dot{y}(t_i)h_i(t_i) + \beta(x_i(t_i),y_i(t_i),p_i(t_i)), \label{E:BBBB}
\end{align}
for $i=1,\ldots,r$. By Theorem~\ref{T:RCKT} (c), (\ref{E:AAAA}) and (\ref{E:BBBB}) are equivalent to the condition
\[
\mathbf{u}\partial_x +  \mathbf{v}\partial_y \in \mathfrak{m}_{\bar{\C}}\dot{\vf} + (x,y)\partial_x \oplus (x^2,y)\partial_y + \widehat{I}.
\]

\end{proof}

\begin{theorem} \label{T:DEFES}
Set $\ell=dim\, \underline{\mathcal{D}\textit{ef}}^{\;es,\mu}_{\,\varphi}(T_\ep )$.  Assume that
\begin{equation}\label{E:BASIS}
\mathbf{a}^j \frac{\partial}{\partial x} + \mathbf{b}^j \frac{\partial}{\partial y}= \left[
\begin{matrix} 
a_1^j\\ \vdots \\a_r^j
\end{matrix}
\right]\frac{\partial}{\partial x} +
\left[
\begin{matrix} 
b_1^j\\ \vdots \\b_r^j
\end{matrix}
\right]\frac{\partial}{\partial y},
\end{equation}
$ 1 \leq j \leq \ell$, represents a basis of $\underline{\mathcal{D}\textit{ef}}^{\;es,\mu}_{\,\varphi}(T_\ep )$. 
Let $\Phi: \bar{\C} \times\C^k \to \C^2 \times \C^k$ be the deformation of $\vf$ given by
\begin{equation}\label{E:DEFXYG}
 X_i(t_i,{\bf s})= x_i(t_i) + \sum_{j=1}^{\ell} a_i^j(t_i)s_j,\;
 Y_i(t_i,{\bf s})= y_i(t_i) + \sum_{j=1}^{\ell} b_i^j(t_i)s_j, 
\end{equation}
$i=1,\ldots,r$. Then $\mathcal{C}on \, \Phi$ is a semiuniversal deformation of $\psi$ in $\widehat{\mathcal{D}\textit{ef}}^{\; es}_{\, \psi}$.
\end{theorem}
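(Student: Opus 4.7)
The strategy is to prove that $\Phi$ is semiuniversal in $\mathcal{D}\textit{ef}^{\;es,\mu}_{\,\varphi}$ and then transfer via the natural equivalences induced by $\mathcal{C}on$ and $\pi$. I would begin by extending the family $(\mathbf{a}^j\partial_x + \mathbf{b}^j\partial_y)_{j=1,\ldots,\ell}$ to a basis $(\mathbf{a}^j\partial_x + \mathbf{b}^j\partial_y)_{j=1,\ldots,N}$ of $T^{1,es}_\varphi$ by appending representatives of classes in $\widehat{I}$ of the form $(k+1)\mathbf{p}^k\partial_x + k\mathbf{p}^{k+1}\partial_y$. Applying Theorem \ref{FIXEDCONEGREUEL} together with Theorem 2.38 of \cite{GLS}, one obtains a semiuniversal deformation $\widetilde\Phi$ of $\varphi$ in $\overset{\twoheadrightarrow}{\mathcal{D}\textit{ef}^{\;es}_{\,\varphi}}$ over a smooth base $(\C^N,0)$, whose restriction to the first $\ell$ coordinates is exactly $\Phi$.

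The crucial observation is that each added generator of $\widehat{I}$ appears as the infinitesimal action at $\varepsilon=0$ of a relative contact transformation of the type
\[
\chi_k(x,y,p,\varepsilon)=\bigl(x+\varepsilon p^k,\ y+\varepsilon \tfrac{k}{k+1}p^{k+1},\ p,\ \varepsilon\bigr),
\]
as produced in the proof of the tangent space computation above. I would then proceed by successive approximation, invoking Theorem \ref{T:RCKT}(d),(e) at each order: working inductively on the powers of $\mathfrak{m}_{\C^N}$, construct simultaneously a morphism $f:(\C^N,0)\to (\C^\ell,0)$ restricting to the identity on the first $\ell$ coordinates and a relative contact transformation $\chi$ over $(\C^N,0)$ verifying \eqref{DIDENTITY}, such that $\chi\cdot \widetilde\Phi=f^\ast\Phi$ in $\mathcal{D}\textit{ef}^{\;es,\mu}_{\,\varphi}$. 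This establishes versality of $\Phi$ in $\mathcal{D}\textit{ef}^{\;es,\mu}_{\,\varphi}$; semiuniversality follows because by construction the Kodaira--Spencer map of $\Phi$ is the identity on $\underline{\mathcal{D}\textit{ef}}^{\;es,\mu}_{\,\varphi}(T_\varepsilon)$, so the tangent of any classifying morphism is forced.

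Having $\Phi$ semiuniversal in $\mathcal{D}\textit{ef}^{\;es,\mu}_{\,\varphi}$, the natural equivalence of functors $T\mapsto\underline{\mathcal{D}\textit{ef}}^{\;es,\mu}_{\,\varphi}(T)$ and $T\mapsto\widehat{\underline{\mathcal{D}\textit{ef}}}^{\;es}_{\,\psi}(T)$ established above via $\mathcal{C}on$ and $\pi$ immediately yields that $\mathcal{C}on\,\Phi$ is a semiuniversal deformation of $\psi$ in $\widehat{\mathcal{D}\textit{ef}}^{\;es}_{\,\psi}$.

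The main obstacle is the inductive construction of the pair $(\chi,f)$: at each step one must verify that the obstruction to extending the previous-order solution lies in the span of the contributions of the contact transformations $\chi_k$, which amounts to re-running at higher order the quotienting argument that identified $\widehat{I}$ as the contact-directional summand in the previous theorem. Theorem \ref{T:RCKT}(c),(e) guarantees the solvability of each infinitesimal step, and the smoothness of $(\C^N,0)$ prevents obstruction accumulation, so the iterative scheme converges to the desired pair $(\chi,f)$.
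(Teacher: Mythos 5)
Your route is genuinely different from the paper's. The paper does not first establish semiuniversality of $\Phi$ in $\mathcal{D}\textit{ef}^{\;es,\mu}_{\,\varphi}$ and then transfer; it proves \emph{formal} semiuniversality of $\mathcal{C}on\,\Phi$ directly in $\widehat{\mathcal{D}\textit{ef}}^{\;es}_{\,\psi}$ by the standard small-extension lifting argument: given a small extension $T'\hookrightarrow T$ it lifts $\chi',\xi',\eta'$ to an auxiliary base $\widetilde T$ using Theorem \ref{T:RCKT}(d),(e), linearizes in the $\varepsilon$-direction via Taylor's formula, and reduces everything to the single linear equation (\ref{E7-PM}) in $\Sigma^{es}_\varphi$, solvable with unique $\widetilde\eta^0$ precisely because $\Sigma^{es}_\varphi$ splits as the span of the $\mathbf a^j\partial_x+\mathbf b^j\partial_y$ plus $\mathfrak{m}_{\bar{\C}}\dot{\vf}+(x,y)\partial_x\oplus(x^2,y)\partial_y+\widehat I$; convergence and the passage from formal to actual semiuniversality are delegated to Flenner's Satz 5.2 together with the completeness statement of Remark \ref{R*}. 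Your plan --- absorb the complementary directions of the semiuniversal object $\widetilde\Phi$ of $\overset{\twoheadrightarrow}{\mathcal{D}\textit{ef}^{\;es}_{\,\varphi}}$ into a single relative contact transformation over all of $(\C^N,0)$, so that $\chi\cdot\widetilde\Phi=f^*\Phi$ for a retraction $f$ --- runs the same decomposition of $\Sigma^{es}_\varphi$ at every order instead of once in an $\varepsilon$-direction, so the essential computation is identical; what it buys is a more geometric statement about the semiuniversal family itself, at the cost of controlling a full power series in $\chi$ and $f$.

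Two steps need more care than you give them. First, $\chi\cdot\widetilde\Phi=f^*\Phi$ by itself only yields \emph{completeness} of $\Phi$ in $\mathcal{D}\textit{ef}^{\;es,\mu}_{\,\varphi}$ (the paper is explicit about this distinction in Remark \ref{R*}); to get versality you must, for a test datum over a closed embedding $T''\hookrightarrow T'$, first lift the relative contact transformation occurring in the $\mu$-isomorphism over $T''$ to one over $T'$ (possible by Theorem \ref{T:RCKT}(b),(d), since $\alpha,\beta_0$ lift along the surjection $\mathcal O_{T'}\twoheadrightarrow\mathcal O_{T''}$) before invoking the versality of $\widetilde\Phi$ in $\overset{\twoheadrightarrow}{\mathcal{D}\textit{ef}^{\;es}_{\,\varphi}}$, and then check that $f\circ h'$ restricts to $g$ on $T''$. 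Second, ``the smoothness of $(\C^N,0)$ prevents obstruction accumulation'' is not an argument for convergence of the formal series defining $\chi$ and $f$: you still need Flenner's criterion (or an Artin-type approximation), anchored by the prior existence of a versal object as supplied by Remark \ref{R*} and Theorem \ref{PAREQ}, exactly as the paper does. With these two repairs your argument goes through, but both are substantive rather than cosmetic.
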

This Theorem is the equivalent for Legendrian curves of Theorem $2.38$ of \cite{GLS} for plane curves.

\begin{remark}\label{R*}
Set 
\[
\overset{\twoheadrightarrow}{M}_\vf=\Sigma_\varphi^{es}/\left(\mathfrak{m}_{\bar{\C}}\dot{\vf} + (x,y)\partial_x \oplus (x^2,y)\partial_y \right).
\]
Then 
\[
 \overset{\twoheadrightarrow}{\underline{\mathcal{D}\textit{ef}}^{\;es}_{\,\vf}} (T_\ep) \cong \overset{\twoheadrightarrow}{M}_\vf.
\]
Let $k=dim\,  \overset{\twoheadrightarrow}{M}_\vf$ and assume that (\ref{E:BASIS}), 
$ 1 \leq j \leq k$, represents a basis of $\overset{\twoheadrightarrow}{M}_\vf$. 
Let $\Phi: \bar{\C} \times\C^k \to \C^2 \times \C^k$ be the deformation of $\vf$ given by
\begin{equation*}
 X_i(t_i,{\bf s})= x_i(t_i) + \sum_{j=1}^{k} a_i^j(t_i)s_j,\;
 Y_i(t_i,{\bf s})= y_i(t_i) + \sum_{j=1}^{k} b_i^j(t_i)s_j.
\end{equation*}
Then $\Phi$ is semiuniversal in $\overset{\twoheadrightarrow}{\mathcal{D}\textit{ef}^{\;es}_{\,\vf}}$ (see \cite{GLS} II Theorem 2.38). If $\Psi \in \widehat{\mathcal{D}\textit{ef}}^{\; es}_{\, \psi}(T)$, then $\Psi^{\pi} \in \overset{\twoheadrightarrow}{\mathcal{D}\textit{ef}^{\;es}_{\,\varphi}} (T)$. Hence there is $f:T \to \overset{\twoheadrightarrow}{M}_\vf$ such that $\Psi^\pi \cong f^\ast \Phi$. Therefore $\Psi=\mathcal{C}on \, \Psi^\pi \cong \mathcal{C}on \, f^\ast \Phi=f^\ast \mathcal{C}on \, \Phi$.
This shows that $\mathcal{C}on \, \Phi$ is  complete in  $\widehat{\mathcal{D}\textit{ef}}^{\; es}_{\, \psi}$. It is actually versal and the proof is only technically more complicated.
\end{remark}

\begin{proof}(of Theorem \ref{T:DEFES})
It is enough to show that $\mathcal{C}on \, \Phi$ is formally semiuniversal (see remark \ref{R*} and \cite{Flenner} Satz 5.2).

Let $\imath: T' \hookrightarrow T$ be a small extension. Let $\Psi \in \widehat{\mathcal{D}\textit{ef}}^{\; es}_{\, \psi}(T)$. Set $\Psi'= \imath^\ast \Psi$. Let $\eta': T' \to \C^\ell$ be a morphism of complex analytic spaces. Assume that $(\chi',\xi')$ define an isomorphism
\[
\eta'^\ast \mathcal{C}on \, \Phi \cong \Psi'.
\]
We need to find $\eta: T \to \C^\ell$ and $\chi,\xi$ such that $\eta' =\eta \circ \imath$ and $\chi,\xi$ define an isomorphism 
\[
\eta^\ast \mathcal{C}on \, \Phi \cong \Psi
\]
that extends $(\chi',\xi')$.

Let $A\,[A']$ be the local ring of $T \, [T']$. Let $\delta$ be the generator of $Ker(A \twoheadrightarrow A')$. We can assume $A' \cong \C\{\mathbf{z}\}/I$, where $\mathbf{z}=(z_1,\ldots,z_m)$. Set
\[
\widetilde{A}'=\C\{\mathbf{z}\} \quad \text{and} \quad \widetilde{A}=\C\{\mathbf{z},\ep\}/(\ep^2,\ep z_1,\ldots,\ep z_m).
\]
Let $\mathfrak{m}_A$ be the maximal ideal of $A$. Since $\mathfrak{m}_A \delta=0$ and $\delta \in \mathfrak{m}_A$, there is a morphism of local analytic algebras from $\widetilde{A}$ onto $A$ that takes $\ep$ into $\delta$ such that the diagram
\begin{equation}
\xymatrix{
\widetilde{A}   \ar[d] \ar[r] &\widetilde{A}'  \ar[d]\\
A   \ar[r] &A' 
}
\end{equation}
commutes. Assume $\widetilde{T} \,[\widetilde{T}']$ has local ring $\widetilde{A} \,[\widetilde{A}']$. We also denote by $\imath$ the morphism $\widetilde{T}' \hookrightarrow \widetilde{T}$. We denote by $\kappa$ the morphisms $T \hookrightarrow \widetilde{T} $ and $T' \hookrightarrow \widetilde{T}'$.  Let $\widetilde{\Psi} \in \widehat{\mathcal{D}\textit{ef}}^{\; es}_{\, \psi}(\widetilde{T})$ be a lifting of $\Psi$.

We fix a linear map $\sigma: A' \hookrightarrow \widetilde{A}'$ such that $\kappa^\ast \sigma=id_{A'}$. Set $\widetilde{\chi}'=\chi_{\sigma(\alpha), \sigma(\beta_0)}$, where  $\chi'=\chi_{\alpha,\beta_0}$. Define $\widetilde{\eta}'$ by  $\widetilde{\eta}'^\ast s_i=\sigma(\eta'^\ast s_i)$, $i=1,\ldots, l$. Let $\widetilde{\xi}'$ be the lifting of $\xi'$ determined by $\sigma$. Then
\[
\widetilde{\Psi}':= \widetilde{\chi}'^{-1} \circ \widetilde{\eta}'^\ast \mathcal{C}on \, \Phi \circ \widetilde{\xi}'^{-1}
\]
is a lifting of $\Psi'$ and
\begin{equation}\label{E:PSITIL'}
\widetilde{\chi}' \circ \widetilde{\Psi}' \circ \widetilde{\xi}'= \widetilde{\eta}'^\ast \mathcal{C}on \, \Phi.
\end{equation}

By Theorem~\ref{T:RCKT} it is enough to find liftings $\widetilde{\chi},\widetilde{\xi},\widetilde{\eta}$ of $\widetilde{\chi}',\widetilde{\xi}',\widetilde{\eta}'$ such that
\[
\widetilde{\chi} \cdot \widetilde{\Psi}^\pi \circ \widetilde{\xi} = \widetilde{\eta}^\ast \Phi
\]
in order to prove the theorem.\\

Consider the following commutative diagram
\[
\xymatrix{
\bar{\C} \times \widetilde{T}' \; \ar[d]^{\widetilde{\Psi}'}  \ar@{^{(}->}[r] &\bar{\C}\times \widetilde{T} \; \ar[d]^{\widetilde{\Psi}} \ar@{.>}[r] &\bar{\C}\times \C^\ell \ar[d]^{\mathcal{C}on \, \Phi}\\
\C^3 \times \widetilde{T}' \; \ar[d]^{pr} \ar@{^{(}->}[r] &\C^3 \times \widetilde{T} \; \ar[d]^{pr} \ar@{.>}[r] &\C^3 \times \C^\ell \ar[d]\\
\widetilde{T}' \; \ar@{^{(}->}[r] \ar@/_2pc/[rr]^{\widetilde{\eta}'} &\widetilde{T} \; \ar@{.>}[r]^{\widetilde{\eta}} &\C^\ell.
}
\]
If $\mathcal{C}on \, \Phi$ is given by 
\[
X_i(t_i,{\bf s}),\; Y_i(t_i,{\bf s}),\; P_i(t_i,{\bf s}) \; \in \C\{\mathbf{s},t_i\},
\]
then $\widetilde{\eta}'^{\ast} \, \mathcal{C}on \, \Phi$ is given by 
\[
X_i(t_i,\widetilde{\eta}'({\bf z})),\; Y_i(t_i,\widetilde{\eta}'({\bf z})),\; P_i(t_i,\widetilde{\eta}'({\bf z}))\; \in \widetilde{A}'\{t_i\}=\C\{\mathbf{z},t_i\}
\]
for $i=1,\ldots,r$.
Suppose that $\widetilde{\Psi}'$ is given by 
\[
U'_i(t_i,{\bf z}),\; V'_i(t_i,{\bf z}),\; W'_i(t_i,{\bf z})\; \in \C\{\mathbf{z},t_i\}.
\]
Then, $\widetilde{\Psi}$ must be given by
\[
U_i=U'_i + \ep u_i, \; V_i=V'_i + \ep v_i, \; W_i=W'_i + \ep w_i \; \in \widetilde{A}\{t_i\}= \C\{\mathbf{z},t_i\} \oplus \ep\C\{t_i\}
\]
with $u_i, v_i, w_i \in \C\{t_i\}$ and $i=1,\ldots,r$. By definition of deformation we have that, for each $i$,
\[
(U_i,V_i,W_i)=(x_i(t_i),y_i(t_i),p_i(t_i)) \; \mod \; \mathfrak{m}_{\widetilde{A}}.
\]
Suppose  $\widetilde{\eta}':\widetilde{T}' \to \C^\ell$ is given by $(\widetilde{\eta}'_1,\ldots,\widetilde{\eta}'_\ell)$, with $\widetilde{\eta}'_i \in \C\{{\bf z}\}$. Then $\widetilde{\eta}$ must be given by $\widetilde{\eta}=\widetilde{\eta}' + \ep\widetilde{\eta}^0$ for some $\widetilde{\eta}^0 = (\widetilde{\eta}^0_1,\ldots,\widetilde{\eta}^0_\ell) \in \C^\ell$. Suppose
that $\tilde{\chi}': \C^3\times \widetilde{T}' \to \C^3\times \widetilde{T'}$ is given at the ring level by
\[
(x,y,p) \mapsto (H'_1,H'_2,H'_3),
\]
such that $H'=id \; \mod \; \mathfrak{m}_{\widetilde{A}'}$ with $H'_i \in (x,y,p)A'\{x,y,p\}$. Let
the automorphism $\widetilde{\xi}': \bar{\C}\times \widetilde{T}' \to \bar{\C}\times \widetilde{T}'$ be given at the ring level by
\[
t_i \mapsto h'_i
\]such that $h'=id \; \mod \; \mathfrak{m}_{\widetilde{A}'}$ with $h'_i \in (t_i)\C\{\mathbf{z},t_i\}$.

Then, from ($\ref{E:PSITIL'}$) it follows that
\begin{align}\label{E3-PM}
\nonumber X_i(t_i,\widetilde{\eta}') &= H'_1(U'_i(h'_i),V'_i(h'_i),W'_i(h'_i)),\\
Y_i(t_i,\widetilde{\eta}') &= H'_2(U'_i(h'_i),V'_i(h'_i),W'_i(h'_i)),\\
\nonumber P_i(t_i,\widetilde{\eta}') &= H'_3(U'_i(h'_i),V'_i(h'_i),W'_i(h'_i)).
\end{align}
Now, $\widetilde{\eta}'$ must be extended to $\widetilde{\eta}$ such that the first two previous equations extend as well. That is, we must have
\begin{align}\label{E4-PM}
X_i(t_i,\widetilde{\eta}) &= (H'_1 + \ep\alpha)(U_i(h'_i+\ep h^0_i), V_i(h'_i+\ep h^0_i), W_i(h'_i+\ep h^0_i),\\
Y_i(t_i,\widetilde{\eta}) &= (H'_2 + \ep\beta)(U_i(h'_i+\ep h^0_i), V_i(h'_i+\ep h^0_i), W_i(h'_i+\ep h^0_i). \notag
 \end{align}
 with $\alpha,\beta \in (x,y,p)\C\{x,y,p\}$, $h^0_i \in (t_i)\C\{t_i\}$ such that 
 \[
(x,y,p) \mapsto (H'_1 + \ep \alpha,H'_2 + \ep \beta,H'_3 + \ep \gamma)
\]
gives a relative contact transformation over $ \widetilde{T}$ for some $\gamma \in (x,y,p)\C\{x,y,p\}$. The existence of this extended relative contact transformation is guaranteed by Theorem \ref{T:RCKT} (e). Moreover, this extension depends only on the choices of $\alpha$ and $\beta_0$.
 So, we need only to find $\alpha$, $\beta_0$, $\widetilde{\eta}^0$ and $h^0_i$ such that (\ref{E4-PM}) holds. Using Taylor's formula and $\ep^2=0$ we see that
\begin{align}\label{E5-PM}
\nonumber X_i(t_i, \widetilde{\eta}' + \ep \widetilde{\eta}^0) &= X_i(t_i,\widetilde{\eta}') + \ep \sum_{j=1}^{\ell}  \partial_{s_j} X_i (t_i, \widetilde{\eta}')\widetilde{\eta}^0_j\\
(\ep  \mathfrak{m}_{\widetilde{A}}=0) \qquad &= X_i(t_i,\widetilde{\eta}') + \ep \sum_{j=1}^{\ell} \partial_{s_j} X_i (t_i, 0)\widetilde{\eta}^0_j,\\
\nonumber Y_i(t_i, \widetilde{\eta}' + \ep \widetilde{\eta}^0) &= Y_i(t_i,\widetilde{\eta}') + \ep \sum_{j=1}^{\ell}  \partial_{s_j} Y_i (t_i, 0)\widetilde{\eta}^0_j.
\end{align}
Again by Taylor's formula and noticing that $\ep \mathfrak{m}_{\widetilde{A}}=0$, $\ep \mathfrak{m}_{\widetilde{A}'}=0$ in $\widetilde{A}$, $h'=id \; \mod \; \mathfrak{m}_{\widetilde{A}'}$ and $(U_i,V_i)=(x_i(t_i),y_i(t_i)) \; \mod \; \mathfrak{m}_{\widetilde{A}}$ we see that
\begin{align}\label{E6-PM}
\nonumber U_i(h'_i + \ep h^0_i) &= U_i(h'_i) + \ep \dot{U_i}(h'_i)h^0_i\\
&= U'_i(h'_i) + \ep (\dot{x_i}h_i^0 + u_i),\\
\nonumber V_i(h'_i + \ep h^0_i) &= V'_i(h'_i) + \ep (\dot{y_i}h_i^0 + v_i).
\end{align}
Now, $H' = id \; \mod \; \mathfrak{m}_{\widetilde{A}'}$, so
\[
\partial_x H'_1 = 1 \; \mod \; \mathfrak{m}_{\widetilde{A}'}, \qquad \partial_y H'_1, \partial_p H'_1 \in  \mathfrak{m}_{\widetilde{A}'} \widetilde{A}'\{x,y,p\}.
\]
In particular,
\[
\ep \partial_y H'_1= \ep \partial_p H'_1=0.
\]
By this and arguing as in (\ref{E5-PM}) and (\ref{E6-PM}) we see that
\begin{align*}
 \nonumber &(H'_1 + \ep\alpha)( U'_i(h'_i) + \ep (\dot{x_i}h_i^0 + u_i), V'_i(h'_i) + \ep (\dot{y_i}h_i^0 + v_i), W'_i(h'_i) + \ep (\dot{p_i}h_i^0 + w_i))\\
\nonumber &= H'_1(U'_i(h'_i),V'_i(h'_i),W'_i(h'_i)) + \ep(\alpha(U'_i(h'_i),V'_i(h'_i),W'_i(h'_i)) + 1(\dot{x_i}h_i^0 + u_i))\\
&= H'_1(U'_i(h'_i),V'_i(h'_i),W'_i(h'_i)) + \ep( \alpha (x_i,y_i,p_i) + \dot{x_i}h_i^0 + u_i),\\
&(H'_2 + \ep\beta)( U'_i(h'_i) + \ep (\dot{x_i}h_i^0 + u_i), V'_i(h'_i) + \ep (\dot{y_i}h_i^0 + v_i), W'_i(h'_i) + \ep (\dot{p_i}h_i^0 + w_i))\\
&= H'_2(U'_i(h'_i),V'_i(h'_i),W'_i(h'_i)) + \ep( \beta (x_i,y_i,p_i) + \dot{y_i}h_i^0 + v_i).
\end{align*}
Substituting this in (\ref{E4-PM}) and using (\ref{E3-PM}) and (\ref{E5-PM}) we see that we have to find $\widetilde{\eta}^0 = (\widetilde{\eta}^0_1,\ldots,\widetilde{\eta}^0_\ell) \in \C^\ell$, $h^0_i$ such that
\begin{align}\label{E7-PM}
&(u_i(t_i),v_i(t_i)) = \sum_{j=1}^{\ell} \widetilde{\eta}^0_j\left(\partial_{s_j} X_i (t_i, 0),\partial_{s_j} Y_i (t_i, 0)\right)-\\
 \nonumber - h^0_i(t_i)&((\dot {x_i}(t_i), \dot{y_i}(t_i)) - (\alpha(x_i(t_i),y_i(t_i),p_i(t_i)),\beta(x_i(t_i),y_i(t_i),p_i(t_i))).
\end{align}
Note that, because of Theorem \ref{T:RCKT} (c),  
\[
(\alpha(x_i(t_i),y_i(t_i),p_i(t_i)),\beta(x_i(t_i),y_i(t_i),p_i(t_i))) \in \widehat{I}
\]
 for each $i$. Also note that $\widetilde{\Psi} \in \widehat{\mathcal{D}\textit{ef}}^{\; es}_{\, \psi}(\widetilde{T})$ means that $(u_i,v_i) \in \Sigma_\varphi^{es}$.
Then, if the vectors
\begin{align*}
&\left(\partial_{s_j} X_1 (t_1, 0),\ldots,\partial_{s_j} X_r (t_r, 0)\right)\partial_x + \left(\partial_{s_j} Y_1 (t_1, 0),\ldots,\partial_{s_j} Y_r (t_r, 0)\right)\partial_y\\
&=(a_1^j(t_1),\ldots,a_r^j(t_r))\partial_x + (b_1^j(t_1),\ldots,b_r^j(t_r))\partial_y, \qquad j=1,\ldots,\ell
\end{align*}
 form a basis of  $\underline{\mathcal{D}\textit{ef}}^{\;es,\mu}_{\,\varphi}(T_\ep )$, we can solve (\ref{E7-PM}) with unique $\widetilde{\eta}^0_1, \ldots, \widetilde{\eta}^0_\ell$ for all $i=1,\ldots,r$. This implies that the conormal of $\Phi$ is a formally semiuniversal  equisingular deformation of $\psi$ over $\mathbb{C}^{\ell}$.
\end{proof}

\section{Deformations of the equation I}\label{SECI}

Let $Y$ be a generic curve with parametrization $\vf$ and equation $f$.
Let $L$ be the conormal of $Y$.

\begin{definition}
We will denote by
$\overset{\twoheadrightarrow}{\mathcal{D}\textit{ef}^{\;es}_{\,f}}$ (or $\overset{\twoheadrightarrow}{\mathcal{D}\textit{ef}^{\;es}_{\,Y}}$)
 the full subcategory of generic \emph{equisingular deformations}  of $($the equation $f$ of$)$ the plane curve $Y$ such that its normal cone along $\{x=y=0\}$ equals $\{y=0\}$. 
\end{definition}
Let $T$ be a complex space.
We associate to a deformation $\Phi$ of $\vf$ the deformation $\mathcal Y$
defined by the kernel of $\Phi^*:\mathcal O_{\mathbb C^2\times T}\to\mathcal O_{\overline C\times T}$.
We obtain in this way a functor
\[
\vartheta:\overset{\twoheadrightarrow}{\mathcal{D}\textit{ef}^{\;es}_{\,\vf}} \to \overset{\twoheadrightarrow}{\mathcal{D}\textit{ef}^{\;es}_{\,f}}.
\]
\begin{theorem}\label{ALPHA}
The functor $\vartheta$ is surjective and induces a natural equivalence between the functors
$T\mapsto \overset{\twoheadrightarrow}{\underline{\mathcal{D}\textit{ef}}^{\;es}_{\,\vf}}(T)$
and
$T\mapsto \overset{\twoheadrightarrow}{\underline{\mathcal{D}\textit{ef}}^{\;es}_{\,f}}(T)$.

Given a morphism of complex spaces $\sigma:T\to S$ and $\Phi\in \overset{\twoheadrightarrow}{\mathcal{D}\textit{ef}^{\;es}_{\,\vf}}(S)$,
\[
\sigma^*\vartheta(\Phi)=\vartheta(\sigma^*\Phi).
\]
\end{theorem}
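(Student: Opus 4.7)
The plan is to reduce the statement to Theorem \ref{EQUIVALENCE} (Theorem 2.64 of \cite{GLS}) together with the functoriality of the image/kernel construction, deducing preservation of the $\twoheadrightarrow$ decoration and compatibility with pullbacks as corollaries. First I would verify that $\vartheta$ is well-defined with image in $\overset{\twoheadrightarrow}{\mathcal{D}\textit{ef}^{\;es}_{\,f}}$. By construction the subscheme $\vartheta(\Phi) \subset \mathbb C^2 \times T$ cut out by $\ker \Phi^*$ coincides set-theoretically with the image of $\Phi$, and by the first half of Theorem \ref{EQUIVALENCE} it defines an equisingular deformation of $f$. Since its fibers are the plane curves parametrized by the fibers $\Phi_t$, the genericity of fibers and the condition that the normal cone along $\{x=y=0\}$ equals $\{y=0\}$ transfer directly from $\Phi$ to $\vartheta(\Phi)$.

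The main step is to show that $\vartheta$ is surjective on objects and that it induces a bijection on isomorphism classes at each $T$. Surjectivity on objects is the content of the second half of Theorem \ref{EQUIVALENCE}: every equisingular $\mathcal Y$ admits a parametrization $\Phi$; if $\mathcal Y$ satisfies the $\twoheadrightarrow$ decoration then so does $\Phi$ by the argument of the previous paragraph. For isomorphism classes, a morphism $(\chi,\xi):\Phi_1 \to \Phi_2$ in $\overset{\twoheadrightarrow}{\mathcal{D}\textit{ef}^{\;es}_{\,\vf}}(T)$ immediately yields an isomorphism $\chi:\vartheta(\Phi_1)\xrightarrow{\sim}\vartheta(\Phi_2)$. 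Conversely, given $\chi:\vartheta(\Phi_1)\xrightarrow{\sim}\vartheta(\Phi_2)$ as a deformation of $f$, both $\chi\circ\Phi_1$ and $\Phi_2$ are parametrizations of the same analytic subset $\vartheta(\Phi_2)$; applying the uniqueness assertion of Theorem \ref{EQUIVALENCE} branchwise (legitimate by Proposition \ref{DECOMP}, which decomposes an equisingular deformation of a reducible curve into equisingular deformations of its branches), there is a unique automorphism $\xi$ of $\bar{\mathbb C}\times T$ over $T$ with $\chi\circ\Phi_1=\Phi_2\circ\xi$, producing the required isomorphism of parametrizations.

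Naturality under base change is a direct verification. For $\sigma:T\to S$ and $\Phi\in\overset{\twoheadrightarrow}{\mathcal{D}\textit{ef}^{\;es}_{\,\vf}}(S)$, the ideal of $\sigma^*\vartheta(\Phi)=(\mathrm{id}\times\sigma)^{-1}\vartheta(\Phi)$ is the pullback of the defining ideal of $\vartheta(\Phi)$; by flatness of equisingular deformations this equals the kernel of the composition $\mathcal O_{\mathbb C^2\times T}\to(\mathrm{id}\times\sigma)^*\mathcal O_{\bar{\mathbb C}\times S}=\mathcal O_{\bar{\mathbb C}\times T}$, which is $\ker\bigl((\sigma^*\Phi)^*\bigr)$, the defining ideal of $\vartheta(\sigma^*\Phi)$. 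I expect the main obstacle to be a clean formulation of the uniqueness of the relative parametrization used in the injectivity part; all the delicate input is carried by Theorem \ref{EQUIVALENCE} together with Proposition \ref{DECOMP}, and the remaining work is essentially bookkeeping around the $\twoheadrightarrow$ decoration.
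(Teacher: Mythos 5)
Your proposal is correct and follows essentially the same route as the paper: the paper's proof is simply a citation of Theorem 2.64 of \cite{GLS} (Theorem \ref{EQUIVALENCE} here), and your argument rests the entire substance on that same theorem, adding only the routine bookkeeping about the genericity/normal-cone decoration and base-change compatibility that the paper leaves implicit.
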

\begin{proof}
See Theorem 2.64 of \cite{GLS}.
\end{proof}

Let $\mathcal Y$ be an object of $\overset{\twoheadrightarrow}{\mathcal{D}\textit{ef}^{\;es}_{\,\vf}}$.
Since the normal cone of $\mathcal Y$ along $\{x=y=0\}$ equals $\{y=0\}$,
$\mathcal Con(\mathcal Y)\subset U\times T$.

Let $\psi$ be the parametrization of the conormal of $\vf$.
Let $\Phi\in  \overset{\twoheadrightarrow}{\mathcal{D}\textit{ef}^{\;es}_{\,\vf}}(T)$.
Let $\Psi$ be the conormal of $\Phi$. 
Let $\widehat\vartheta(\Psi)$ denote the image of $\Psi$.
By Theorem \ref{T:RCONORMAL}
\begin{equation}\label{UPDOWN}
\widehat{\vartheta}(\Psi)=\mathcal Con(\vartheta(\Psi^\pi)).
\end{equation}

\begin{lemma}\label{ALPHAMU}
The functor $\widehat{\vartheta}$ is surjective and induces a natural equivalence between the functors
$T\mapsto\widehat{\underline{\mathcal{D}\textit{ef}}}_{\,\psi}^{\;es}(T)$
and
$T\mapsto\widehat{\underline{\mathcal{D}\textit{ef}}}_{\,L}^{\;es}(T)$.

Given a morphism of complex spaces $\sigma:T\to S$ and $\Psi\in\widehat{\mathcal \mathcal{D}\textit{ef}}_{\,\psi}^{\;es}(S)$,
\begin{equation}\label{SIGMASTAR}
\sigma^*\widehat{\vartheta}(\Psi)=\widehat{\vartheta}(\sigma^*\Psi).
\end{equation}
\end{lemma}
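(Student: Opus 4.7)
The plan is to reduce the statement to a composition of three equivalences we already have at hand, by means of the identity $\widehat{\vartheta}(\Psi)=\mathcal{C}on(\vartheta(\Psi^\pi))$ from (\ref{UPDOWN}). The three inputs are: (i) the natural equivalence between $\mathcal{D}\textit{ef}^{\;es,\mu}_{\,\vf}$ and $\widehat{\mathcal{D}\textit{ef}}^{\;es}_{\,\psi}$ already established through the functors $\mathcal{C}on$ and $\pi$; (ii) Theorem \ref{ALPHA}, giving the natural equivalence $\vartheta:\overset{\twoheadrightarrow}{\underline{\mathcal{D}\textit{ef}}^{\;es}_{\,\vf}}\to \overset{\twoheadrightarrow}{\underline{\mathcal{D}\textit{ef}}^{\;es}_{\,f}}$; (iii) Definition \ref{DEFLEGDEF}, which identifies an equisingular Legendrian deformation of $L$ with the conormal of a generic equisingular deformation of $Y$ up to a relative contact transformation satisfying (\ref{DIDENTITY}).

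For surjectivity, given $\mathcal{L}\in\widehat{\mathcal{D}\textit{ef}}^{\;es}_{\,L}(T)$, Definition \ref{DEFLEGDEF} provides a relative contact transformation $\chi$ and a generic equisingular deformation $\mathcal{Y}$ with $\chi(\mathcal{L})\cong\mathcal{C}on(\mathcal{Y})$. By Theorem \ref{ALPHA}, $\mathcal{Y}\cong\vartheta(\Phi)$ for some $\Phi\in\overset{\twoheadrightarrow}{\mathcal{D}\textit{ef}^{\;es}_{\,\vf}}(T)$. Setting $\Psi=\mathcal{C}on(\Phi)\in\widehat{\mathcal{D}\textit{ef}}^{\;es}_{\,\psi}(T)$, equation (\ref{UPDOWN}) yields $\widehat{\vartheta}(\Psi)=\mathcal{C}on(\vartheta(\Psi^\pi))\cong\mathcal{C}on(\mathcal{Y})\cong\mathcal{L}$ in $\widehat{\mathcal{D}\textit{ef}}^{\;es}_{\,L}(T)$, since isomorphism in this category is defined up to relative contact transformations satisfying (\ref{DIDENTITY}).

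For the natural equivalence on isomorphism classes, the key observation is that the definition of morphism in $\mathcal{D}\textit{ef}^{\;es,\mu}_{\,\vf}$ already builds in the action by relative contact transformations, exactly mirroring the isomorphism relation in $\widehat{\mathcal{D}\textit{ef}}^{\;es}_{\,L}$. Suppose $\widehat{\vartheta}(\Psi)\cong\widehat{\vartheta}(\Psi')$, realised by a relative contact transformation as in Definition \ref{DEFLEGDEF}. Translating this back through $\mathcal{C}on$ and $\pi$, it becomes an isomorphism $\vartheta(\Psi^\pi)\cong\vartheta(\Psi'^\pi)$ in $\overset{\twoheadrightarrow}{\mathcal{D}\textit{ef}^{\;es}_{\,f}}$ modulo the contact-transformation action, which by Theorem \ref{ALPHA} lifts to an isomorphism $\Psi^\pi\cong\Psi'^\pi$ in $\mathcal{D}\textit{ef}^{\;es,\mu}_{\,\vf}$, and finally by the $\pi$/$\mathcal{C}on$ equivalence to $\Psi\cong\Psi'$ in $\widehat{\mathcal{D}\textit{ef}}^{\;es}_{\,\psi}$. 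The compatibility (\ref{SIGMASTAR}) follows formally: since $\sigma^{\ast}$ commutes with $\pi$ by the definition of $\pi$, with $\vartheta$ by the second statement of Theorem \ref{ALPHA}, and with $\mathcal{C}on$ by the flat base-change property of the relative conormal, we chain
\[
\sigma^{\ast}\widehat{\vartheta}(\Psi)=\sigma^{\ast}\mathcal{C}on(\vartheta(\Psi^\pi))=\mathcal{C}on(\vartheta((\sigma^{\ast}\Psi)^\pi))=\widehat{\vartheta}(\sigma^{\ast}\Psi).
\]

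The main obstacle I anticipate is the bookkeeping in the injectivity step: one must verify that an isomorphism between $\mathcal{C}on(\vartheta(\Psi^\pi))$ and $\mathcal{C}on(\vartheta(\Psi'^\pi))$ in $\widehat{\mathcal{D}\textit{ef}}^{\;es}_{\,L}$ — which by definition is a relative contact transformation satisfying (\ref{DIDENTITY}) — actually pulls back, via $\pi$, to a morphism in $\mathcal{D}\textit{ef}^{\;es,\mu}_{\,\vf}$ of the form $(\chi\cdot\Phi)_i=(\chi\circ\mathcal{C}on\,\Phi_i)^\pi$ while preserving genericity and the normal cone condition $\{y=0\}$. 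This is essentially the content of Theorem \ref{CONTACTEQUI} together with the last remark preceding the statement of $\widehat{\vartheta}$, but one should explicitly check that the contact transformation supplied by an isomorphism on the Legendrian side respects the trivial section and the normal-cone constraint, so that its $\pi$-image is an admissible morphism in $\mathcal{D}\textit{ef}^{\;es,\mu}_{\,\vf}$. Once that is in place, the three equivalences chain together without further issue.
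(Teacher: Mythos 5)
Your proposal is correct and follows essentially the same route as the paper: both arguments reduce surjectivity to Theorem \ref{ALPHA} via the plane projection and the identity (\ref{UPDOWN}), and both obtain the natural equivalence and (\ref{SIGMASTAR}) by chaining the $\mathcal{C}on$/$\pi$ equivalence with the one induced by $\vartheta$. Your version merely spells out the bookkeeping (in particular the injectivity step and the commutation of $\sigma^{\ast}$ with each functor) that the paper leaves implicit.
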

\begin{proof}
If $\mathcal L$ is in $\widehat{{\mathcal \mathcal{D}\textit{ef}}}_{\,L}^{\;es}(T)$, 
$\mathcal L^\pi$ is in $\overset{\twoheadrightarrow}{\mathcal{D}\textit{ef}^{\;es}_{\,f}}(T)$. 
Therefore $\mathcal L^\pi=\vartheta(\Phi)$, 
for some $\Phi\in\overset{\twoheadrightarrow}{\mathcal{D}\textit{ef}^{\;es}_{\,\vf}}(T)$.
Setting $\Psi=\mathcal Con(\Phi)$, $\widehat\vartheta(\Psi)=\mathcal L$.

By Theorem \ref{ALPHA} and (\ref{UPDOWN}), $\widehat\vartheta$ induces a natural equivalence and (\ref{SIGMASTAR}) holds.
\end{proof}

\begin{theorem}\label{PAREQ}
For each Legendrian curve $L$ there is a semiuniversal deformation $\mathcal L$ of $L$ in the category 
$\widehat{ \mathcal{D}\textit{ef}}^{\;es}_{\,L}$. Moreover, $\mathcal L$ is defined over a smooth analytic manifold.
\end{theorem}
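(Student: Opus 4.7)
The plan is to transport the semiuniversal equisingular deformation of the parametrization $\psi$ of $L$, constructed in Theorem \ref{T:DEFES}, through the natural equivalence $\widehat{\vartheta}$ of Lemma \ref{ALPHAMU}. By the remark following Lemma \ref{VERYGOODCURVE} we may first apply a germ of contact transformation to assume $L$ is in generic position, and we write $\psi:\bar{\mathbb C}\to\mathbb C^3$ for a parametrization and $\varphi=\psi^\pi$ for the induced parametrization of the generic plane curve $Y=\pi(L)$.

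First I would invoke Theorem \ref{T:DEFES}: choosing a basis (\ref{E:BASIS}) of $\underline{\mathcal{D}\textit{ef}}^{\;es,\mu}_{\,\varphi}(T_\ep)$ and forming the deformation $\Phi$ over $\mathbb C^\ell$ in (\ref{E:DEFXYG}), the conormal $\Psi=\mathcal Con\,\Phi$ is a semiuniversal object of $\widehat{\mathcal{D}\textit{ef}}^{\;es}_{\,\psi}$ with smooth base $\mathbb C^\ell$. I then set
\[
\mathcal L := \widehat{\vartheta}(\Psi) \in \widehat{\mathcal{D}\textit{ef}}^{\;es}_{\,L}(\mathbb C^\ell),
\]
which is the candidate semiuniversal deformation, automatically defined over a smooth manifold.

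To verify semiuniversality, let $\mathcal L' \in \widehat{\mathcal{D}\textit{ef}}^{\;es}_{\,L}(T')$ together with a closed embedding $f:T''\hookrightarrow T'$ and a map $g:T''\to \mathbb C^\ell$ such that $f^*\mathcal L' \cong g^*\mathcal L$. By the surjectivity part of Lemma \ref{ALPHAMU} there exists $\Psi'\in\widehat{\mathcal{D}\textit{ef}}^{\;es}_{\,\psi}(T')$ with $\widehat{\vartheta}(\Psi')\cong\mathcal L'$; applying (\ref{SIGMASTAR}) to $f$ and $g$ and using the injectivity part of the natural equivalence, we transport $f^*\mathcal L' \cong g^*\mathcal L$ to an isomorphism $f^*\Psi' \cong g^*\Psi$ in $\widehat{\underline{\mathcal{D}\textit{ef}}}_{\,\psi}^{\;es}(T'')$. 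Semiuniversality of $\Psi$ then produces $h:T'\to\mathbb C^\ell$ with $h\circ f = g$ and $h^*\Psi\cong\Psi'$, whose differential at the distinguished point is determined by $\Psi'$. Applying $\widehat{\vartheta}$ and using (\ref{SIGMASTAR}) once more,
\[
h^*\mathcal L \;=\; h^*\widehat{\vartheta}(\Psi) \;=\; \widehat{\vartheta}(h^*\Psi) \;\cong\; \widehat{\vartheta}(\Psi') \;\cong\; \mathcal L'.
\]
Since the natural equivalence of Lemma \ref{ALPHAMU} makes $\Psi'$ and $\mathcal L'$ determine one another up to isomorphism, the uniqueness of $T(h)$ transfers, giving semiuniversality of $\mathcal L$ and proving the theorem.

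The main obstacle in this argument is bookkeeping: Lemma \ref{ALPHAMU} yields an equivalence of set-valued deformation functors, so one must check that versality data (the triple $(f,g,\mathcal L')$ with $f^*\mathcal L'\cong g^*\mathcal L$) pulls back coherently through $\widehat{\vartheta}$, and that the resulting $h$ is unique on tangent vectors. Both points are handled by systematic use of (\ref{SIGMASTAR}) and the bijectivity of $\widehat{\vartheta}$ on isomorphism classes, but no new geometric input is required beyond Theorem \ref{T:DEFES} and Lemma \ref{ALPHAMU}.
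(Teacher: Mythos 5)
Your proposal is correct and follows essentially the same route as the paper: take the semiuniversal object $\Psi=\mathcal{C}on\,\Phi$ of $\widehat{\mathcal{D}\textit{ef}}^{\;es}_{\,\psi}$ over $\mathbb C^\ell$ given by Theorem \ref{T:DEFES} and set $\mathcal L=\widehat{\vartheta}(\Psi)$, using Lemma \ref{ALPHAMU} to transfer semiuniversality. The only difference is that you spell out the transfer of versality data through the natural equivalence via (\ref{SIGMASTAR}), which the paper leaves implicit.
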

\begin{proof}
Let $\Psi$ be the semiuniversal deformation of the parametrization $\psi$ of $L$ in the category
$\widehat{ \mathcal{D}\textit{ef}}^{\;es}_{\,\psi}$.
By Lemma \ref{ALPHAMU}, we can take $\mathcal L=\widehat \vartheta (\Psi)$.
\end{proof}

\section{Deformations of the equation II}\label{SECII}

\begin{definition}
Let $\mathcal{D}\textit{ef}^{\;es,\mu}_{\,f}$ (or $\mathcal{D}\textit{ef}^{\;es,\mu}_{\,Y}$)
be the category given in the following way:
the objects of $\mathcal{D}\textit{ef}^{\;es,\mu}_{\,f}$
are the objects of $\overset{\twoheadrightarrow}{\mathcal{D}\textit{ef}^{\;es}_{\,f}}$;
two objects $\mathcal Y,\mathcal Z$ of $\mathcal{D}\textit{ef}^{\;es,\mu}_{\,f}(T)$
are \em isomorphic \em  if
there is a relative contact transformation $\chi$ over $ T $ such that $\mathcal Z=\mathcal Y^\chi$.
\end{definition}

\begin{lemma}\label{CONDUTOR}
Assume $f\in\mathbb C\{x,y\}$ is the defining function of a generic plane curve $Y$.
Let $L$ be the conormal of $Y$.
For each $\ell\ge 1$ there is $h_\ell\in \mathbb C\{x,y\}$ such that
\[
(\ell+1)p^\ell f_x+\ell p^{\ell+1}f_y\equiv h_\ell  \mod I_L.
\]
Moreover, $h_\ell$ is unique modulo $I_Y$.
\end{lemma}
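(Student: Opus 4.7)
The plan is to invoke the equivalence (Theorem \ref{ALPHA}) between equisingular deformations of the parametrization $\varphi$ and of the equation $f$, applied to the specific equisingular deformation associated with the generator of $\widehat I$. Concretely, consider the deformation $\Phi$ of $\varphi$ over $T_\ep$ defined branchwise by
\[
\Phi_i(t_i,\ep)=\bigl(x_i(t_i)+\ep p_i(t_i)^\ell,\; y_i(t_i)+\ep\tfrac{\ell}{\ell+1}p_i(t_i)^{\ell+1}\bigr).
\]
As shown in the proof of the theorem preceding this lemma, $\Phi$ is the image of the trivial deformation under the relative contact transformation $\chi(x,y,p,\ep)=(x+\ep p^\ell,\,y+\ep\tfrac{\ell}{\ell+1}p^{\ell+1},\,p,\,\ep)$, hence $\Phi$ is equisingular. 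Because $\Phi$ parametrizes a hypersurface of $\mathbb C^2\times T_\ep$, it defines a deformation of the equation of $Y$; write that equation as $F=f+\ep G$ with $G\in\mathbb C\{x,y\}$.

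Existence of $h_\ell$ will then follow from $\Phi^\ast F=0$. Since $f(\varphi_i)=0$ and $\ep^2=0$, Taylor expansion in $\ep$ gives on each branch
\[
0=\ep\bigl[p_i^\ell f_x(\varphi_i)+\tfrac{\ell}{\ell+1}p_i^{\ell+1}f_y(\varphi_i)+G(\varphi_i)\bigr].
\]
Viewing $p^\ell f_x+\tfrac{\ell}{\ell+1}p^{\ell+1}f_y+G$ as an element of $\mathbb C\{x,y,p\}$, this identity says its pullback through every $\psi_i$ vanishes; since $L$ is reduced with each irreducible component parametrized by its $\psi_i$, it lies in $I_L$. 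Multiplying through by $\ell+1$, the element $h_\ell:=-(\ell+1)G\in\mathbb C\{x,y\}$ satisfies the required congruence.

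For uniqueness modulo $I_Y$, if $h,h'\in\mathbb C\{x,y\}$ both satisfy the congruence, then $h-h'\in I_L\cap\mathbb C\{x,y\}$. Since $\pi\colon L\to Y$ is finite and birational onto the reduced plane curve $Y$, the natural map $\mathbb C\{x,y\}\to\mathcal O_L$ factors injectively through $\mathcal O_Y=\mathbb C\{x,y\}/(f)$, so $I_L\cap\mathbb C\{x,y\}=(f)=I_Y$. The main technical point is justifying rigorously the hypersurface/equation picture for $\Phi$, namely that $\Phi(\bar{\mathbb C}\times T_\ep)$ is cut out by $f+\ep G$ with $G$ well defined modulo $(f)$; this is the content of Theorem \ref{EQUIVALENCE} (equivalently Theorem \ref{ALPHA}), after which the remainder of the proof reduces to the one-line Taylor expansion above together with the injection $\mathcal O_Y\hookrightarrow\mathcal O_L$.
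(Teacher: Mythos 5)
Your proof is correct, but it follows a genuinely different route from the paper's. The paper works branch by branch with valuations: writing $v_\tau$ for the valuation attached to the normalization of the branch $L_\tau$, it uses Zariski's formulas $v_\tau(f_{\tau,y})=c_\tau+k_\tau-1$ and $v_\tau(xf_{\tau,x})=v_\tau(yf_{\tau,y})$ to check that $\ell p^{\ell+1}f_{\tau,y}$ and $(\ell+1)p^\ell f_{\tau,x}$ have valuation at least the conductor, hence agree on $L_\tau$ with elements $a_{\tau,\ell},b_{\tau,\ell}$ of $\mathbb C\{x,y\}$, and then patches these across branches via $a_\ell=\sum_\tau a_{\tau,\ell}\prod_{j\neq\tau}f_j$. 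You instead realize $(\ell+1)p^\ell f_x+\ell p^{\ell+1}f_y$ as $-(\ell+1)$ times the $\ep$-coefficient $G$ of the equation $F=f+\ep G$ induced by the parametrized deformation $\Phi_i=(x_i+\ep p_i^\ell,\,y_i+\ep\tfrac{\ell}{\ell+1}p_i^{\ell+1})$, reading off the congruence from $\Phi^*F=0$ by Taylor expansion; the only input is the ``parametrization to equation'' statement already quoted in Section \ref{SECDEFORM} (you do not even need equisingularity or Theorem \ref{EQUIVALENCE} for this, only that the kernel of $\Phi^*$ is principal with $F|_{\ep=0}=f$, and there is no circularity since that fact is independent of the present section). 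Your argument is more conceptual and makes transparent why $h_\ell$ later appears as the infinitesimal action of $\chi_{p^\ell,0}$ on $f$ in Theorem \ref{ACTION} and Lemma \ref{HLEMMA}, while the paper's argument is more self-contained and effective, yielding explicit valuation bounds on $h_\ell$ from the conductor. Your uniqueness argument, via $I_L\cap\mathbb C\{x,y\}=I_Y$ from surjectivity of $\pi|_L$ onto the reduced curve $Y$, is the standard one and is fine (the paper leaves this step implicit).
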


\begin{proof}
Let $\Delta$ be the germ of $\mathbb C$ at the origin.
Let $k_\tau $ $[c_\tau ]$ be the multiplicity [the conductor] of the branch $Y_\tau $ of $Y$, $\tau =1,...,n$.
Let $\sigma_\tau :\Delta\to L_\tau $ be the normalization of the conormal $L_\tau $ of $Y_\tau $, $\tau =1,...,n$.
Let $v_\tau $ be the valuation of $\mathbb C\{x,y,p\}$ associated to $\sigma_\tau $, $\tau =1,...,n$.
The restriction of $v_\tau $ to $\mathbb C\{x,y\}$ defines the valuation of 
$\mathbb C\{x,y\}$ associated to the normalization of $Y_\tau $, $\tau =1,...,n$.
By \cite{Z}, Section I.2
\begin{equation}\label{ESTIMATION}
v_\tau(f_{\tau, y})=c_\tau+k_\tau-1,
\qquad \hbox{and} \qquad
v_\tau(xf_{\tau ,x})=v_\tau(yf_{\tau, y}),
\end{equation}
for $\tau=1,...,n$.  By (\ref{ESTIMATION}) and  \cite{Z} there is $a_{\tau,\ell}\in \mathbb C\{x,y\}$ 
such that $v_\tau(\ell p^{\ell+1} f_{\tau, y}-a_{\tau,\ell})=+\infty$, $\tau=1,...,n$, for each $\ell\ge 1$.
Setting $a_{\ell}=\sum_{\tau=1}^na_{\tau,\ell}\prod_{j\neq \tau}f_j$,
\[
v_\tau(\ell p^{\ell+1} f_y-a_{\ell})=+\infty, \qquad \hbox{for  }\ell\ge 1, \quad \tau=1,...,n.
\]
A similar reasoning shows that there are $b_{\ell}\in \mathbb C\{x,y\}$ such that 
\[
v_\tau((\ell+1) p^{\ell} f_x-b_{\ell})=+\infty, \qquad \hbox{for  }\ell\ge 1, \quad \tau=1,...,n.
\]
\end{proof}

\begin{remark}\label{PRE-ACTION}
Assume $Y$ is irreducible with multiplicity $\nu$. Suppose $\mathcal Y \in \overset{\twoheadrightarrow}{\mathcal{D}\textit{ef}^{\;es}_{\,Y}}(T)$, where $T$ is a reduced complex space and let $\mathcal L$ be the relative conormal of $\mathcal Y$.  
Let $\Phi$ be the deformation of the parametrization of $Y$ such that $\vartheta(\Phi)=\mathcal Y$. Let $\Psi$ be the conormal of $\Phi$.
There $A_i\in\mathcal O_T$ such that
\[
\Psi^*x=t^\nu,
\quad
\Psi^*y=t^n+\sum_{i\ge n+1}A_it^i
\quad
\hbox{and}
\quad
\Psi^*p=\frac{n}{\nu}t^{n-\nu}+\sum_{i\ge n+1}\frac{i}{\nu}A_it^{i-\nu}.
\]
Given $f \in \mathcal{O}_T\{x,y,p\}$, $f \in I_{\mathcal L}$ if and only if $\Psi^*f=0$.
\end{remark}

\begin{theorem}\label{ACTION}
Let $Y$ be a generic curve.
Let $T$ be a complex space.
Let $\imath_0 :T\hookrightarrow T_0$ be a small extension and $\chi_0$ be a relative contact transformation over $T_0$.
Let $\mathcal Y_0\in \overset{\twoheadrightarrow}{\mathcal{D}\textit{ef}^{\;es}_{\,f}}(T_0)$, $\mathcal Y=\imath_0^*\mathcal Y_0$ and $\chi=\imath_0^*\chi_0$.
Assume $\chi_0$ equals $(\ref{AEPAL})$ and $\mathcal Y$ $[\mathcal Y_0,\mathcal Y^{\chi},\mathcal Y_0^{\chi_0}]$ are defined by $F$ $[F_0,F^{\chi},F_0^{\chi_0}]$, where $F_0=F+\ep g$, $g\in\mathbb C\{x,y\}$, and $F^{\chi}$ is a lifting of $f$. 
Then, if $F_0^{\chi_0}$ is a lifting of $F^\chi$,
\begin{equation}\label{AACCAO}
F_0^{\chi_0}=F^{\chi}+\ep g+\ep\alpha_0f_x+\ep \beta_0f_y+ 
\varepsilon {\textstyle\sum_{k\ge 1}}\frac{\alpha_k}{k+1}h_k.
\end{equation}
\end{theorem}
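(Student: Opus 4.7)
The plan is to work at the level of parametrizations of the conormals and reduce the identity to a computation on $L$. Choose $\Phi_0 \in \overset{\twoheadrightarrow}{\mathcal{D}\textit{ef}^{\;es}_{\,\vf}}(T_0)$ with $\vartheta(\Phi_0) = \mathcal Y_0$ (Theorem~\ref{ALPHA}) and set $\Phi = \imath_0^*\Phi_0$, $\Psi_0 = \mathcal{C}on(\Phi_0)$, $\Psi = \mathcal{C}on(\Phi)$. Write $(X,Y,P) = \Psi^*(x,y,p)$ and $(X_0,Y_0,P_0) = \Psi_0^*(x,y,p)$. Because $\imath_0$ is a small extension with generator $\varepsilon$ of its kernel (so $\varepsilon\mathfrak m_{T_0} = 0$ and $\varepsilon^2 = 0$), there exist $u,v,w \in \mathbb C\{t\}^r$ with $X_0 = X + \varepsilon u$, $Y_0 = Y + \varepsilon v$, $P_0 = P + \varepsilon w$. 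By the definition $\mathcal Y^\chi = \pi(\chi(\mathcal{C}on\,\mathcal Y))$, $\mathcal Y^\chi$ is parametrized by $U = X + A(X,Y,P)$, $V = Y + B(X,Y,P)$; and, since $A,B,C$ lie in $\mathfrak n_{T_0}$ while $\alpha,\beta,\gamma$ depend only on $(x,y,p)$, a direct check gives $A(X_0,Y_0,P_0) = A(X,Y,P)$ and $\varepsilon\alpha(X_0,Y_0,P_0) = \varepsilon\tilde\alpha$, where $\tilde\alpha(t) := \alpha(x(t),y(t),p(t))$. Therefore $U_0 = U + \varepsilon(u + \tilde\alpha)$ and similarly $V_0 = V + \varepsilon(v + \tilde\beta)$.

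Next I would write $F_0^{\chi_0} = F^\chi + \varepsilon G$ for some $G \in \mathbb C\{x,y\}$ (determined modulo $I_Y$ by the unit ambiguity in the generator of the defining ideal), and exploit the vanishing identities $F_0^{\chi_0}(U_0,V_0) = 0$ and $F_0(X_0,Y_0) = 0$ satisfied by the parametrizations. Taylor expansion around $(U,V)$ and $(X,Y)$, combined with $F^\chi(U,V) = 0 = F(X,Y)$ and the reductions $\varepsilon F^\chi_x(U,V) = \varepsilon f_x(x(t),y(t))$, $\varepsilon G(U_0,V_0) = \varepsilon G(x(t),y(t))$ afforded by $\varepsilon\mathfrak m_{T_0} = 0$, collapses these identities on the parametrization of $Y$ to
\[
G + f_x(u + \tilde\alpha) + f_y(v + \tilde\beta) \equiv 0, \qquad g + f_x u + f_y v \equiv 0.
\]
Subtracting yields the key congruence $G - g \equiv -\alpha f_x - \beta f_y \pmod{I_L}$ in $\mathcal O_L$.

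Finally, I would expand $\alpha = \sum_{k \geq 0} \alpha_k p^k$ with $\alpha_k \in \mathbb C\{x,y\}$ and use the Cauchy problem~(\ref{E:CAUCHYTEBETA}), which integrates to $\beta = \beta_0 + \sum_{k \geq 1} \tfrac{k}{k+1}\alpha_k p^{k+1}$, to rewrite
\[
\alpha f_x + \beta f_y = \alpha_0 f_x + \beta_0 f_y + \sum_{k \geq 1} \frac{\alpha_k}{k+1}\bigl[(k+1) p^k f_x + k p^{k+1} f_y\bigr].
\]
Lemma~\ref{CONDUTOR} then replaces each bracket by $h_k$ modulo $I_L$, producing the stated formula for $F_0^{\chi_0} - F^\chi$ modulo the indeterminacy $G \bmod I_Y$. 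The main technical point is the interplay between the congruence in $\mathcal O_L$, where the Taylor calculation naturally lives, and the equality in $\mathcal O_{T_0}\{x,y\}$ asserted by the statement; this is reconciled by observing that multiplying the generator $F_0^{\chi_0}$ by a unit $1 + \varepsilon h$ alters $G$ by $hf \in I_Y$, which exactly matches the indeterminacy built into Lemma~\ref{CONDUTOR}'s definition of $h_k$.
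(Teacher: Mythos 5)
Your strategy---pass to a deformation of the parametrization via Theorem \ref{ALPHA}, transport it by $\mathcal{C}on$, expand to first order along the parametrization using $\ep\,\mathfrak m_{T_0}=0$, and finish with the expansion $\beta=\beta_0+\sum_{k\ge1}\frac{k}{k+1}\alpha_kp^{k+1}$ coming from (\ref{E:CAUCHYTEBETA}) together with Lemma \ref{CONDUTOR}---is sound, and it is essentially a parametrization-level rendition of the paper's argument, which instead works at the level of ideals: the paper uses $I_{\chi_0(\mathcal L_0)}=I_{\chi(\mathcal L)}+\ep I_L$ and Taylor-expands $F_0(x+A+\ep\alpha,y+B+\ep\beta,s,\ep)$ modulo that ideal. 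Your explicit treatment of the unit ambiguity $1+\ep h$ (so that (\ref{AACCAO}) is really an identity modulo $\ep I_Y$) is more careful than the paper's. One citation point: over the non-reduced base $T_0$ you need $I_{\mathcal L_0}=\ker\Psi_0^{\ast}$, which is not Remark \ref{PRE-ACTION} (stated for reduced $T$) but follows from (\ref{UPDOWN}) and Lemma \ref{ALPHAMU}.

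There is, however, a concrete point you pass over: your key congruence $G-g\equiv-\alpha f_x-\beta f_y \pmod{I_L}$ yields $F_0^{\chi_0}=F^{\chi}+\ep g-\ep\alpha_0f_x-\ep\beta_0f_y-\ep\sum_{k\ge1}\frac{\alpha_k}{k+1}h_k$ modulo $\ep I_Y$, i.e.\ the \emph{opposite} sign on the contact terms from the asserted formula (\ref{AACCAO}), yet you claim it ``produces the stated formula''. The minus sign in your computation is forced by the convention $\mathcal Y^{\chi}=\pi\left(\chi(\mathcal Con\,\mathcal Y)\right)$ of Theorem \ref{CONTACTEQUI}: the image of a set under $(x,y,p)\mapsto(x+\ep\alpha,\ldots)$ has equation obtained by substituting $x-\ep\alpha$, etc. The paper's proof reaches the plus sign because its opening congruence $F_0^{\chi_0}\equiv F_0(x+A+\ep\alpha,y+B+\ep\beta,s,\ep)$ mod $I_{\chi_0(\mathcal L_0)}$ acts by substitution, i.e.\ it computes the equation of $\pi\left(\chi_0^{-1}(\mathcal L_0)\right)$. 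So as written your final step does not deliver (\ref{AACCAO}): you must either establish the formula in the substitution convention (as the paper's first congruence does), or state explicitly that your result differs from (\ref{AACCAO}) by the replacement $(\alpha,\beta_0)\mapsto(-\alpha,-\beta_0)$---which corresponds to using the other lifting of $\chi$ and is harmless for every later use of the theorem (Corollary \ref{CACTION}, Lemma \ref{HLEMMA}, Theorems \ref{Tep} and \ref{LAST}), since only the linear space of terms $\alpha_0f_x+\beta_0f_y+\sum_k\frac{\alpha_k}{k+1}h_k$ over all admissible $(\alpha,\beta_0)$ enters there. Until the sign is reconciled, the proof of the statement as printed is incomplete.
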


\begin{proof}
Remark that if $\chi$ equals (\ref{CHICHI}) and $I_{\mathcal Y}$ is generated by $F$, $I_{\mathcal Y^\chi}$ is generated by $F^\chi \in \mathcal O_{\C^2 \times S}$ such that
\begin{equation*}
F^\chi(x,y,s) \equiv F(x+A,y+B,s) \hbox{ mod } I_{\chi\left(\mathcal L\right)}.
\end{equation*}
Let $L$ denote the conormal of $Y$. Let $\mathcal L$$[\mathcal L_0]$ denote the relative conormal of $\mathcal Y$$[\mathcal Y_0]$.
We can assume $s=(s_1,...,s_m)$, 
\[
\mathcal O_T=\mathbb C\{s\},
~~ 
\mathcal O_{T_0}=\mathbb C\{s,\ep\}/\mathfrak n_\ep,
~~
\mathfrak n_\ep=(s_1\ep,...,s_m\ep,\ep^2).
\]
Since
$
I_{\chi_0\left(\mathcal L_0\right)} = I_{\chi\left(\mathcal L\right)} +  \ep\mathcal{O}_{\C^3\times T_0}\cap I_{\chi_0\left(\mathcal L_0\right)}=I_{\chi\left(\mathcal L\right)} +\ep I_L$ we have the following congruences modulo $I_{\chi_0\left(\mathcal L_0\right)}$:
\begin{align*}
F_0^{\chi_0} &\equiv F_0(x+A+\ep\alpha,y+B+\ep\beta,s,\ep)  \\
&\equiv F(x+A+\ep\alpha,y+B+\ep\beta,s) + \ep g  \\
&\equiv F(x+A,y+B,s) + \ep g+\ep\alpha\partial_x F+ \ep\beta\partial_y F  \\
&\equiv F^{\chi}+\ep g+\ep\alpha_0f_x+\ep \beta_0f_y+\varepsilon {\textstyle\sum_{k\ge 1}}\frac{\alpha_k}{k+1}h_k.
\end{align*}
\end{proof}

\begin{corollary}\label{CACTION}
Let $F=f+\varepsilon g$ be a defining function of a deformation 
$\mathcal Y\in \overset{\twoheadrightarrow}{\mathcal{D}\textit{ef}^{\;es}_{\,f}}(T_\varepsilon)$.
Let $\chi_{\alpha,\beta_0}$ be a contact transformation over $T_\varepsilon$.
Then 
\begin{equation}\label{ACCAO}
f+\varepsilon g+\varepsilon \alpha_0 f_x + \varepsilon \beta_0 f_y +
\varepsilon {\textstyle \sum_{k\ge 1}}\frac{\alpha_k}{k+1}h_k
\end{equation}
defines the action of $\chi_{\alpha,\beta_0}$ on $\mathcal Y$.
\end{corollary}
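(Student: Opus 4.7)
The plan is to obtain this corollary as a direct specialization of Theorem \ref{ACTION}, taking $T$ to be a reduced point and $T_0 = T_\varepsilon$. With that choice, the local ring of $T$ is $\mathbb C$, the ideal $\mathfrak n_\varepsilon$ from the statement of Theorem \ref{ACTION} collapses to $(\varepsilon^2)$, and the hypothesis $F_0 = F + \varepsilon g$ of the theorem matches the assumption $F = f + \varepsilon g$ of the corollary after identifying $F$ in the theorem with $f$.

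Next I would verify that the lifting $F^\chi$ of $f$ appearing in Theorem \ref{ACTION} can be taken to be $f$ itself. This is because the restriction $\chi := \imath_0^*\chi_{\alpha,\beta_0}$ to the point $T$ is forced to be the identity contact transformation: by (\ref{abc}) the components $\alpha,\beta,\gamma$ of $\chi_{\alpha,\beta_0}$ over $T_\varepsilon$ all lie in $\mathfrak n_\varepsilon = (\varepsilon)$, which vanishes upon restriction to the closed point. Hence $\chi$ acts trivially on the curve $\mathcal Y = Y$, and $F^\chi = f$ is a legitimate choice of lifting in the sense of Theorem \ref{ACTION}.

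Substituting $F^\chi = f$ into the master formula (\ref{AACCAO}) of the theorem then yields
\[
F_0^{\chi_0} = f + \varepsilon g + \varepsilon \alpha_0 f_x + \varepsilon \beta_0 f_y + \varepsilon \sum_{k \geq 1}\frac{\alpha_k}{k+1} h_k,
\]
which is exactly the expression (\ref{ACCAO}) claimed. Since $F_0^{\chi_0}$ is by construction a defining function of the transformed deformation $\chi_{\alpha,\beta_0}\cdot \mathcal Y$, this settles the corollary.

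I do not anticipate any substantive obstacle: the corollary is merely an unpacking of Theorem \ref{ACTION} in its simplest nontrivial base case. The only verification needed is the elementary remark that a contact transformation over $T_\varepsilon$ normalized by (\ref{DIDENTITY}) restricts to the identity at $\varepsilon = 0$, after which formula (\ref{AACCAO}) applies verbatim with $F^\chi$ replaced by $f$.
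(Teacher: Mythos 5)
Your proposal is correct and is exactly the derivation the paper intends: the corollary is stated without proof as the specialization of Theorem \ref{ACTION} to the case where $T$ is the reduced point and $T_0=T_\varepsilon$, so that $\imath_0^*\chi_{\alpha,\beta_0}$ is the identity and $F^\chi$ may be taken to be $f$ in formula (\ref{AACCAO}). Your explicit check that the components of $\chi_{\alpha,\beta_0}$ lie in $\mathfrak n_{T_\varepsilon}\subset(\varepsilon)$ and hence vanish on the closed fibre is the only point needing verification, and it is right.
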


\begin{definition}
Let $f$ be a generic plane curve with tangent cone $\{y=0\}$. 
We will denote by $I_f$ the ideal of $\mathbb C\{x,y\}$ generated by the functions $g$ 
such that $f+\varepsilon g$ is equisingular over $T_\ep$ and has trivial normal cone along its trivial section.
We call $I_f$ the \emph{equisingularity ideal of} $f$.

We will denote by $I^\mu_f$ the ideal of $\mathbb C\{x,y\}$ generated by 
$f,(x,y)f_x$, $(x^2,y)f_y$ and $h_\ell$, $\ell\ge 1$.
\end{definition}

Let $f=\sum_{k,\ell}a_{k,\ell}$ be a convergent power series.
Let $u,v,d$ be positive integers. 
Assume $u,v$ coprime.
If $a_{k,\ell}\neq 0$ implies $uk+v\ell\ge d$ and there are $k_1,\ell_1,k_2,\ell_2$ such that 
$(k_1,\ell_1)\neq(k_2,\ell_2)$ and $a_{k_i,\ell_i}\neq 0$, $i=1,2$, we call
\[
\textstyle
f_{u,v,d}(x,y)=\sum_{uk+v\ell=d}a_{k,\ell}x^ky^\ell
\]
a \emph{face} of $f$.
We say that $f$ is \emph{semiquasihomogeneous} (\emph{SQH}) of type $(u,v;d)$ if $f_{u,v,d}$ is
a face of $f$ and $f_{u,v,d}$ has isolated singularities. We say that $f$ is \emph{Newton non-degenerate} (\emph{NND})
if $x,y$ do not divide $f$ and the singular locus of each face of $f$ is contained in $\{xy=0\}$.

\begin{lemma}\label{HLEMMA}
If $f$ is generic, $I^\mu_f\subset I_f$. 
\end{lemma}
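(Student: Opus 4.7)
The strategy is to show that every generator of $I^\mu_f$ lies in $I_f$, by exhibiting, for each, an explicit deformation $f+\varepsilon g$ that is equisingular and has trivial normal cone along $\{x=y=0\}$. The generators fall into two qualitatively different families: the ones coming from the sub-ideal $(f)+(x,y)f_x+(x^2,y)f_y$ (to be handled by elementary changes of coordinates) and the ones of the form $h_\ell$ (to be handled via a relative contact transformation, using Corollary \ref{CACTION}).

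\textbf{Trivial-deformation generators.} For $g=f$, the deformation $f+\varepsilon f=(1+\varepsilon)f$ defines the same closed subscheme as the trivial deformation, so $f\in I_f$. For the generators $xf_x$, $yf_x$, $yf_y$, $x^2f_y$, apply respectively the coordinate changes $(x,y)\mapsto((1+\varepsilon)x,y)$, $(x+\varepsilon y,y)$, $(x,(1+\varepsilon)y)$, $(x,y+\varepsilon x^2)$. Expanding modulo $\varepsilon^2$ shows that in each case $f(x+\varepsilon a,y+\varepsilon b)\equiv f+\varepsilon(af_x+bf_y)$, so the corresponding deformation is isomorphic to the trivial one; it is therefore equisingular. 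The key point for the normal cone condition is that the linear part of every such change fixes the tangent direction $\{y=0\}$, which is exactly why $(x^2,y)$ (not $(x,y)$) is the correct ideal multiplying $f_y$: taking $b=x$ would produce a term $k\varepsilon xy^{k-1}$ in the initial form of the fiber equation, which would break the tangent cone.

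\textbf{The generators $h_\ell$.} Fix $\ell\ge 1$ and take $\alpha=p^\ell$, $\beta_0=0$. The Cauchy problem in Theorem \ref{T:RCKT}(c), namely $\partial\beta/\partial p=p\,\partial\alpha/\partial p$, yields $\beta=\tfrac{\ell}{\ell+1}p^{\ell+1}$, so $\chi_\ell:=\chi_{p^\ell,0}$ is a genuine relative contact transformation over $T_\varepsilon$ verifying (\ref{DIDENTITY}). Apply it to the trivial deformation of $f$ and use Corollary \ref{CACTION}: since $\alpha_0=0$, $\beta_0=0$, and the only nonzero Taylor coefficient of $\alpha=\sum_k\alpha_kp^k$ is $\alpha_\ell=1$, the action produces the equation
\[
f+\varepsilon\alpha_0 f_x+\varepsilon\beta_0 f_y+\varepsilon\sum_{k\ge 1}\tfrac{\alpha_k}{k+1}h_k \;=\; f+\tfrac{\varepsilon}{\ell+1}h_\ell.
\]
By Theorem \ref{CONTACTEQUI} this equation defines a generic equisingular deformation; since $\chi_\ell(0,0,0,\varepsilon)=(0,0,0,\varepsilon)$, the trivial section is preserved and hence the normal cone along it remains $\{y=0\}$. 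Therefore $h_\ell/(\ell+1)\in I_f$, and since $\ell+1$ is a unit, $h_\ell\in I_f$.

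\textbf{Main obstacle.} The serious step is the one just carried out: identifying precisely which contact transformation produces $h_\ell$ and then reading off from Corollary \ref{CACTION} that the induced deformation of the equation is exactly $f+\tfrac{\varepsilon}{\ell+1}h_\ell$. Once this identification is in place, equisingularity and the triviality of the normal cone are immediate from Theorem \ref{CONTACTEQUI} together with the fact that $\chi_\ell$ fixes the zero section; the elementary generators require nothing more than explicit changes of variable.
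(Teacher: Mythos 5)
Your proof is correct and follows essentially the paper's own route: for the generators $h_\ell$ you use exactly the paper's device, namely the relative contact transformation $\chi_{p^\ell,0}$ together with Corollary \ref{CACTION} and Theorem \ref{CONTACTEQUI}, while for $f$, $(x,y)f_x$ and $(x^2,y)f_y$ you work downstairs with plane coordinate changes, which are precisely the plane projections of the transformations $\chi_{\alpha,0}$, $\chi_{0,\beta_0}$ (with $\alpha\in(x,y)$, $\beta_0\in(x^2,y)$) that the paper feeds into the same corollary, so the two treatments coincide up to packaging. The one point to repair is the normal-cone claim in the $h_\ell$ step: the fact that $\chi_{p^\ell,0}$ fixes the trivial section does not by itself give triviality of the normal cone (a transformation can fix the section and still move the tangent direction); what is needed, and what the paper invokes, is that the derivative of $\chi_{p^\ell,0}$ leaves $\{y=0\}$ invariant --- equivalently, on each branch the perturbations $\varepsilon\,\psi^{\ast}(p^{\ell})$ and $\varepsilon\tfrac{\ell}{\ell+1}\psi^{\ast}(p^{\ell+1})$ have order at least the multiplicity $k_i$ and strictly greater than $k_i$, respectively, because $Y$ is generic --- which is the same kind of argument you already used for the elementary generators.
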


\begin{proof}
Let $\alpha\in (x,y)$, $\beta\in (x^2,y)$. Set $\chi=\chi_{\alpha,0}~[\chi=\chi_{0,\beta},\chi=\chi_{p^\ell,0}]$. 
By Lemma \ref{ACTION}, $f^\chi$ equals
\[
f+\ep\alpha f_x, 
\qquad
[f+\ep\beta f_y, 
\quad
f+\ep h_\ell/(\ell+1)].
\]
By Lemma \ref{CONTACTEQUI}, $f^\chi$ is equisingular. 
Since the derivative of $\chi$ leaves invariant $\{y=0\}$, then $(x,y) f_x$, $(x^2,y)f_y\subset I_f$ and $h_\ell\in I_f$, for each $\ell\ge 1$.
\end{proof}

\begin{theorem}\label{Tep}
If $f$ is generic, 
\[\underline{\mathcal{D}\textit{ef}}^{\;es,\mu}_{\,f}(T_\ep)\simeq I_f/I^\mu_f.\]
\end{theorem}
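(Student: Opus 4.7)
The plan is to define a map $\Theta\colon I_f \to \underline{\mathcal{D}\textit{ef}}^{\;es,\mu}_{\,f}(T_\ep)$ by $g \mapsto [V(f+\ep g)]$, and show it is surjective with kernel exactly $I_f^\mu$. Surjectivity is immediate: every object of $\overset{\twoheadrightarrow}{\mathcal{D}\textit{ef}^{\;es}_{\,f}}(T_\ep)$ is given by some defining function $F$, which after rescaling by a unit $F|_{\ep=0}/f$ can be written as $f+\ep g$, and the defining property of $I_f$ then forces $g\in I_f$.

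For the inclusion $I_f^\mu \subseteq \ker \Theta$, I would realize each generator as the action of a specific contact transformation, using Corollary \ref{CACTION}: the generator $\lambda f$ comes from the unit rescaling $1+\ep\lambda$ (which does not change $V$); $\alpha_0 f_x$ with $\alpha_0 \in (x,y)$ arises from $\chi_{\alpha_0, 0}$; $\beta_0 f_y$ with $\beta_0 \in (x^2, y)$ from $\chi_{0, \beta_0}$; and $\psi h_\ell$ (for $\ell\ge 1$, $\psi\in\C\{x,y\}$) from $\chi_{(\ell+1)\psi p^\ell, 0}$. The constraints $\alpha_0 \in (x,y)$ and $\beta_0 \in (x^2,y)$ come from requiring $\alpha,\beta_0 \in (x,y,p)$ in Theorem \ref{T:RCKT} together with the condition that the tangent map $d\chi(0)$ preserves $\{y=p=0\}$, keeping the result within the category.

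For the reverse inclusion $\ker \Theta \subseteq I_f^\mu$, suppose $V(f+\ep g) \cong V(f+\ep g')$ via a relative contact transformation $\chi$ over $T_\ep$. By Theorem \ref{T:RCKT}(b), $\chi = \chi_{\alpha, \beta_0}$ with $\alpha \in (x,y,p)$ and $\beta_0 \in \C\{x,y\} \cap (x,y,p) = (x,y)$. Combining with a unit $1 + \ep\lambda$ relating the defining equations, Corollary \ref{CACTION} yields
\[
g - g' \;=\; \alpha_0 f_x + \beta_0 f_y + \sum_{k \ge 1} \frac{\alpha_k}{k+1} h_k - \lambda f.
\]
Every term on the right except possibly $\beta_0 f_y$ already lies in $I_f^\mu$. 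A priori $\beta_0 \in (x,y)$ only, so writing $\beta_0 = cx + \beta_0'$ with $c\in\C$ and $\beta_0' \in (x^2, y)$, the remaining task is to show $c = 0$. For this I would use the $(1,2)$-weighted grading on $\C\{x,y\}$ (weighting $x$ by $1$ and $y$ by $2$): since $f$ is generic, $f \in (x^2, y)^k$ means all monomials of $f$ have weight $\ge 2k$, and the lifting requirement over a neighborhood of $T_\ep$ forces $I_f \subset (x^2, y)^k$. All terms on the right except $cxf_y$ have weight $\ge 2k$; but $cxf_y$ contains the monomial $ck\,xy^{k-1}$ of weight $2k-1$, which is nonzero because the $y^k$-coefficient of $f$ is nonzero and which no other term can cancel. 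Since $g - g' \in I_f \subset (x^2,y)^k$, this forces $c = 0$, hence $\beta_0 \in (x^2, y)$ and $g - g' \in I_f^\mu$.

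The main obstacle is this last sharpening of the ambient constraint $\beta_0 \in (x,y)$ to $\beta_0 \in (x^2, y)$. It is essential that $f$ be generic so that $I_f \subset (x^2, y)^k$, and that the minimal-weight monomial of $xf_y$ come from the nonzero $y^k$-coefficient of $f$, ensuring the weight-$(2k-1)$ obstruction genuinely blocks $c \ne 0$. This is the analogue, for the contact category, of the refinement in Remark \ref{R*} that distinguishes the semiuniversal deformation of the parametrization from that of the equation.
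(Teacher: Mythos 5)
Your proposal is correct and follows essentially the same route as the paper: represent every class by $f+\ep g$ with $g\in I_f$, and use Theorem \ref{T:RCKT} together with the action formula of Corollary \ref{CACTION} to translate isomorphism over $T_\ep$ into the relation $g-g'\equiv \alpha_0 f_x+\beta_0 f_y+\sum_{\ell}\tfrac{\alpha_\ell}{\ell+1}h_\ell \bmod (f)$, whence the identification with $I_f/I^{\mu}_f$. Your extra weighted-order step forcing $\beta_0\in(x^2,y)$ is a correct and welcome elaboration of a point the paper leaves implicit (the trivializing transformation must preserve the normal cone $\{y=0\}$ because both objects lie in $\overset{\twoheadrightarrow}{\mathcal{D}\textit{ef}^{\;es}_{\,f}}$); just note that the containment $I_f\subset(x^2,y)^k$ you invoke, while true, is most cleanly obtained from the equimultiplicity and trivial-normal-cone conditions (or from the paper's lemma that such deformations of a generic curve are generic), which already annihilate the $xy^{k-1}$ coefficient your argument needs, both in $g-g'$ and in the $h_\ell$.
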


\begin{proof}
Let $G\in \mathcal{D}\textit{ef}^{\;es,\mu}_{\,f}(T_\ep)$. 
There is $g\in I_f$ such that $G=f+\ep g $.
The deformation $f+\ep g $ is trivial in $\mathcal{D}\textit{ef}^{\;es,\mu}_{\,f}(T_\ep)$  
if and only if there are $h \in \C\{x,y\}$ and a contact transformation  (\ref{abc})
 such that
\begin{equation}\label{CONDICAO2}
G(x+\alpha,y+\beta,\ep)=(1+\ep h)f 
\qquad  \hbox{ mod }  \varepsilon I_{L}. 
\end{equation}  
By Corollary \ref{CACTION}, (\ref{CONDICAO2}) holds if and only if
\[
g+\alpha_0 f_x + \beta_0 f_y
+\sum_{\ell } \frac{\alpha _\ell}{\ell+1} h_\ell
=h f\hbox{ mod } (f).
\]
Hence $G$ is trivial if and only if $g \in I_f^\mu$.
\end{proof}

\begin{remark}\label{PRE-LAST}
 Each equisingular deformation $F$ of a SQH or NND plane curve $f$ is isomorphic to a deformation $\widetilde{F}$, such that $\widetilde{F}$ is equisingular via trivial sections (see \cite{WAHL} and \cite{GLS}). This means that, in the SQH or NND case, if $A \twoheadrightarrow A'$ is a small extension with kernel $\ep$ such that $\Y' \in  \mathcal{D}\textit{ef}^{\;es,\mu}_{\,f}(A'), \Y \in \mathcal{D}\textit{ef}^{\;es,\mu}_{\,f}(A)$ defined by $F'$, respectively $F=F'+\ep a(x,y)$, then $f+\ep a(x,y)$ defines a deformation in $\mathcal{D}\textit{ef}^{\;es,\mu}_{\,f}(T_\ep)$(see Theorem 8.2 of \cite{WAHL}). 
\end{remark}

\begin{theorem}\label{LAST}
Assume $Y$ is a generic plane curve with conormal $L$, defined by a  power series $f$.
Assume $f$ is SQH or $f$ is NND.
If $g_1,...,g_n\in I_f$ represent a basis of $I_f/I^\mu_f$ with Newton order $\geq 1$, 
the  deformation $\mathcal G$ defined by
\begin{equation}\label{VERSALEQUATION}
G(x,y,s_1,...,s_n)=f(x,y)+\sum_{i=1}^n s_ig_i
\end{equation}
is a semiuniversal deformation of  $f$ in $\mathcal{D}\textit{ef}^{\;es,\mu}_{\,f}$.
\end{theorem}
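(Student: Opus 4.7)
The plan is to establish formal semiuniversality of $\mathcal{G}$ along the lines of Flenner's criterion \cite{Flenner} (Satz~5.2), following the scheme of the proof of Theorem~\ref{T:DEFES}. The tangent-level computation is already in place: by Theorem~\ref{Tep} one has $\underline{\mathcal{D}\textit{ef}}^{\;es,\mu}_{\,f}(T_\ep)\simeq I_f/I^\mu_f$, and the Kodaira--Spencer map of $\mathcal{G}$ at the origin sends $\partial/\partial s_i$ to $[g_i]$. Since $\{[g_1],\ldots,[g_n]\}$ is a basis, this map is an isomorphism, so semiuniversality (uniqueness of the tangent map at first order) will follow automatically once formal versality is established.

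For versality, fix a small extension $\imath:T'\hookrightarrow T$ with kernel $(\ep)$, a deformation $\mathcal{Y}\in\mathcal{D}\textit{ef}^{\;es,\mu}_{\,f}(T)$, a morphism $h':T'\to\mathbb{C}^n$, and an isomorphism $(h')^{*}\mathcal{G}\cong\imath^{*}\mathcal{Y}$; the task is to produce $h:T\to\mathbb{C}^n$ extending $h'$ together with an isomorphism $h^{*}\mathcal{G}\cong\mathcal{Y}$ extending the given one. Here the SQH/NND hypothesis enters through Remark~\ref{PRE-LAST}: we may represent $\mathcal{Y}$ by some $F$ and $\imath^{*}\mathcal{Y}$ by $F'=\imath^{*}F$, both equisingular along trivial sections. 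Passing to auxiliary rings $\widetilde{A},\widetilde{A}'$ as in the proof of Theorem~\ref{T:DEFES} and lifting the contact transformation implementing $(h')^{*}\mathcal{G}\cong\imath^{*}\mathcal{Y}$ via Theorem~\ref{T:RCKT}(e), we may assume $F=(h')^{*}G+\ep\, a(x,y)$ with $a\in\mathbb{C}\{x,y\}$. Since $F$ is equisingular, $f+\ep a$ defines a deformation in $\mathcal{D}\textit{ef}^{\;es,\mu}_{\,f}(T_\ep)$; by Theorem~\ref{Tep} this forces $a\in I_f$.

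Using the basis decompose $a=\sum_{i=1}^{n}\lambda_i g_i + r$ with $\lambda_i\in\mathbb{C}$ and $r\in I^\mu_f$. Define $h:T\to\mathbb{C}^n$ at the ring level by $h^{*}s_i=(h')^{*}s_i+\ep\lambda_i$; then $h$ extends $h'$ and $h^{*}G=F-\ep r$. The residual term $\ep r$ is killed by a relative contact transformation over $T$. Indeed, writing (modulo $(f)$)
\[
r=\alpha_0 f_x+\beta_0 f_y+\sum_{\ell\ge 1}\frac{\alpha_\ell}{\ell+1}h_\ell,
\]
with $\alpha_0\in(x,y)$, $\beta_0\in(x^2,y)$ and $\alpha_\ell\in\mathbb{C}$, Corollary~\ref{CACTION} shows that the action of $\chi_{\ep\alpha,\ep\beta_0}$ on $h^{*}\mathcal{G}$, with $\alpha=\alpha_0+\sum_{\ell\ge 1}\alpha_\ell p^\ell$, adds precisely $\ep r$ to $h^{*}G$ (using $\ep\,\mathfrak{m}_T=0$ to evaluate coefficients at $s=0$). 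Composing with this contact transformation produces the desired extended isomorphism, completing the inductive step.

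The main technical obstacle is, as in Theorem~\ref{T:DEFES}, the bookkeeping needed to reduce to $F=(h')^{*}G+\ep\, a(x,y)$ with $a$ independent of the parameters $s_i$: one must simultaneously adjust a chosen lift of $h'$, a lift of the contact trivialization of $(h')^{*}\mathcal{G}\cong\imath^{*}\mathcal{Y}$, and the unit relating $F'$ to $(h')^{*}G$, all through the auxiliary rings $\widetilde{A},\widetilde{A}'$. This mirrors the maneuvering in the proof of Theorem~\ref{T:DEFES}, performed here at the equation level (via Theorem~\ref{ACTION} and Corollary~\ref{CACTION}) rather than at the parametrization level. The SQH/NND hypothesis is indispensable because it is precisely what makes Remark~\ref{PRE-LAST} available and allows the reduction of the first-order error to a function $a\in\mathbb{C}\{x,y\}$; without it the error could depend on the deformation parameters, and the induction would not close.
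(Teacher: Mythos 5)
Your proposal is correct and follows essentially the same route as the paper: reduce via Flenner's criterion to a first-order problem over a small extension, use Remark~\ref{PRE-LAST} to place the first-order error $a$ in $I_f$, split $a=\sum_i\lambda_i g_i+r$ with $r\in I^\mu_f$, adjust the classifying map by $\ep\lambda_i$, and absorb $\ep r$ by a relative contact transformation via Theorem~\ref{ACTION}/Corollary~\ref{CACTION} together with a unit for the $(f)$-component. The only point you leave implicit is the second hypothesis of Flenner's Satz~5.2 --- the existence of some versal deformation in $\mathcal{D}\textit{ef}^{\;es,\mu}_{\,f}$ --- which the paper supplies by citing Theorem~\ref{PAREQ}.
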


\begin{proof} 
The choice of $g_1,...,g_n$ identifies $I_f/I^\mu_f$ with $\mathbb C^n$.
It is enough to show that (\ref{VERSALEQUATION}) 
is a formally versal deformation of $f$
in $\mathcal{D}\textit{ef}^{\;es,\mu}_{\,f}$ and there is a versal deformation of $f$ in $\mathcal{D}\textit{ef}^{\;es,\mu}_{\,f}$ (see \cite{Flenner} Satz $5.2$).
The second requirement follows from Theorem \ref{PAREQ}. 
Let us prove that the first requirement is fulfilled.
We will follow the terminology of the proof of Theorem \ref{ACTION}. 
Let $\eta:T\to\mathbb C^n$ be a morphism of complex spaces and let
$\chi$ be a relative contact transformation over $T$ such that $\eta^*\mathcal G=\mathcal Y^\chi$.
It is enough to show that there is a unique pair $(\eta_0,\chi_0)$ where $\eta_0$ is a morphism from $T_0$ to $\mathbb C^n$ and $\chi_0$ is a relative contact 
transformation  over $T_0$ such that
\begin{equation}\label{REQ}
\eta_0\circ\imath_0=\eta
\qquad
\hbox{and}
\qquad
\eta_0^*\mathcal G=\mathcal Y_0^{\chi_0}.
\end{equation}
Because $\eta^*\mathcal G=\mathcal Y^\chi$ there is $h\in (s)\mathcal O_{\mathbb C^2\times T}$ such that
\[
(1+h)\eta^*G=F^\chi.
\]
In order for \ref{REQ} to hold, we need to find $a\in\mathbb C^n$
 , $\sigma\in\mathcal O_{\mathbb C^2}$ and $\chi_0$ such that
\[
\eta^0=\eta+\ep a,
\qquad
\hbox{and}
\qquad
(1+h+\ep \si)\eta_0^*G=F_0^{\chi_0}.
\]
By Theorem \ref{T:RCKT} there are $A,B_0$ such that 
\[
\chi=\chi_{A,B_0}
\]
and $\chi_0$ exists if and only if there are $\alpha,\beta_0$ such that
\[
\chi_0=\chi_{A+\ep \alpha,B_0+\ep\beta_0}.
\]
By Theorem \ref{ACTION}, $F_0^{\chi_0}$ equals (\ref{AACCAO}).
Moreover,
\begin{equation}
\begin{split}
(1+h+\ep\sigma)\eta^*_0G & = (1+h)\eta^*G +\ep\sigma\eta^*G+\ep (1+h){\textstyle\sum_{i=1}^na_ig_i} \\
 & = F^\chi  +\ep \sigma f+\ep (1+h) {\textstyle\sum_{i=1}^n a_ig_i}.
\end{split}
\end{equation}
Hence we need to solve the equation
\begin{equation}\label{REL}
g(1+h)^{-1}= {\textstyle\sum_{i=1}^na_ig_i}-(1+h)^{-1}(\ep\sigma f+\alpha_0f_x + \beta_0f_y +
 {\textstyle \sum_\ell\frac{\alpha_\ell}{\ell+1}h_\ell).}
\end{equation}
Since, as noted in Remark \ref{PRE-LAST}, $g(1+h)^{-1}\in I_f$ there are unique $a_1,...,a_n$ such that
\[
g(1+h)^{-1}-{\textstyle\sum_{i=1}^na_ig_i}\in I^\mu_f.
\]
Hence there are $\alpha_\ell,\beta_0,\sigma$ such that (\ref{REL}) holds.
\end{proof}

\begin{corollary}
The relative conormal of $\mathcal G$ is a semiuniversal deformation of the conormal $L$ of $Y$ on
$\widehat{\mathcal{D}\textit{ef}}^{\;es}_{\,L}$.
\end{corollary}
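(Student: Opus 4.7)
The plan is to derive the corollary from Theorem \ref{LAST} by transporting semiuniversality along the chain of categorical equivalences established earlier. Theorem \ref{ALPHA} identifies $\overset{\twoheadrightarrow}{\mathcal{D}\textit{ef}^{\;es}_{\,\varphi}}$ with $\overset{\twoheadrightarrow}{\mathcal{D}\textit{ef}^{\;es}_{\,f}}$ via $\vartheta$, the functor $\mathcal{C}on$ (with quasi-inverse $\pi$) identifies $\mathcal{D}\textit{ef}^{\;es,\mu}_{\,\varphi}$ with $\widehat{\mathcal{D}\textit{ef}}^{\;es}_{\,\psi}$, and Lemma \ref{ALPHAMU} identifies $\widehat{\mathcal{D}\textit{ef}}^{\;es}_{\,\psi}$ with $\widehat{\mathcal{D}\textit{ef}}^{\;es}_{\,L}$ via $\widehat\vartheta$. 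Concatenating these, and using the identity (\ref{UPDOWN}) to conclude that the resulting square commutes, I obtain a natural equivalence of deformation functors
\[
\Xi : \underline{\mathcal{D}\textit{ef}}^{\;es,\mu}_{\,f}(T) \xrightarrow{\;\sim\;} \widehat{\underline{\mathcal{D}\textit{ef}}}_{\,L}^{\;es}(T),
\qquad
\mathcal Y \longmapsto \mathbb P^\ast_{\mathcal Y}(\mathbb C^2 | T),
\]
which by construction is compatible with arbitrary base change $T'\to T$ and which, on $T=\mathrm{pt}$, sends $f$ to its conormal $L$.

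Next I would transfer semiuniversality along $\Xi$. Semiuniversality is a property of the functor of isomorphism classes that a deformation corepresents on small extensions, together with the uniqueness of the tangent map $T(h)$; hence a natural equivalence of such functors necessarily carries semiuniversal objects to semiuniversal objects. Applying this observation to $\Xi$ and to the deformation $\mathcal G$ constructed in Theorem \ref{LAST}, the image $\Xi(\mathcal G) = \mathbb P^\ast_{\mathcal G}(\mathbb C^2 | \mathbb C^n)$, which is precisely the relative conormal of $\mathcal G$, is a semiuniversal deformation of $L$ in $\widehat{\mathcal{D}\textit{ef}}^{\;es}_{\,L}$.

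The only point requiring extra care is the tangent-map statement in the definition of semiuniversality: one must verify that the natural isomorphism
\[
\underline{\mathcal{D}\textit{ef}}^{\;es,\mu}_{\,f}(T_\varepsilon) \;\simeq\; \widehat{\underline{\mathcal{D}\textit{ef}}}_{\,L}^{\;es}(T_\varepsilon)
\]
induced by $\Xi$ on infinitesimal deformations matches the tangent space $T_{\mathbb C^n,0} \simeq I_f/I_f^{\mu}$ computed in Theorem \ref{Tep} with the one on the Legendrian side coming from Theorem \ref{T:DEFES}, so that the tangent map $T(h):T_{T',o'}\to T_{\mathbb C^n,0}$ attached to an arbitrary $\mathcal L'\in \widehat{\mathcal{D}\textit{ef}}^{\;es}_{\,L}(T')$ coincides with the one attached to its $\Xi$-preimage. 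This is the main bookkeeping step, but it is immediate from the definition of $\Xi$ on infinitesimal deformations and from the commutativity of $\Xi$ with pullback along $T_\varepsilon \hookrightarrow T'$. With this, versality of $\Xi(\mathcal G)$ follows from Theorem \ref{PAREQ}, and uniqueness of $T(h)$ transfers from Theorem \ref{LAST}, completing the proof.
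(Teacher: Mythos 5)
Your proposal is correct and coincides in substance with the paper's own proof: the paper verifies the versality lifting property by passing from a Legendrian deformation to its plane projection, applying the semiuniversality of $\mathcal G$ in $\mathcal{D}\textit{ef}^{\;es,\mu}_{\,f}$ from Theorem \ref{LAST}, and returning via $\mathcal Con$ — which is exactly the transfer along the base-change-compatible natural equivalence $\Xi$ that you construct. The difference is purely presentational: you package the diagram chase as abstract transport of semiuniversality along an equivalence of deformation functors, and you are somewhat more explicit than the paper about the tangent-map clause.
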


\begin{proof}
Suppose $\imath: T' \hookrightarrow T$ is an embedding of complex spaces, $\mathcal L \in \widehat{\mathcal{D}\textit{ef}}^{\;es}_{\,L}(T)$, $\mathcal L'=\imath^\ast \mathcal L \in \widehat{\mathcal{D}\textit{ef}}^{\;es}_{\,L}(T')$. Let $\eta':T' \to \C^n$ be a morphism of complex spaces and $\chi'$ a relative contact transformation such that
\begin{equation}\label{E:PRE-LAST1}
\chi'( \EL') = \eta'^\ast \mathcal Con( \mathcal G).
\end{equation}
Let $\Y'=\pi(\EL')$ and $\Y=\pi(\EL)$. Equation (\ref{E:PRE-LAST1}) implies that ${\Y'}^{\chi'}= \eta'^\ast \GE \in \mathcal{D}\textit{ef}^{\;es,\mu}_{\,f}(T')$. Because $\GE$ is semiuniversal, there is $\eta:T \to \C^n$ with $\eta'= \eta \circ \imath$ and $\chi$ relative contact transformation extending $\chi'$ such that $\Y^\chi=\eta^\ast \GE$. This means that $\eta^\ast \mathcal{C}on(\GE)= \chi(\EL)$, hence $\mathcal Con(\GE)$ is  semiuniversal.
\end{proof}

\begin{figure}[h]
\centering
\includegraphics[ trim={0 9cm 0 0},clip, scale=1]{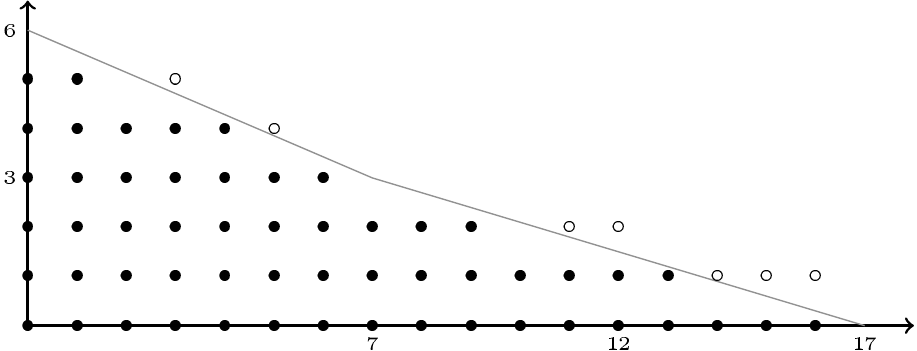}
\caption{Monomial base for  $\frac{\C\{x,y\}}{\left( f, (x,y)f_x,(x^2,y)f_y\right)}$. } \label{fig:M3}
\end{figure}

\begin{example}
If $f(x,y)=(y^3+x^7)(y^3+x^{10})$, $f$ is NND and $I_f$ is generated by the polynomials $x^2f_y,yf_x$ and $x^iy^j$ such that
$3i+7j\ge 42$ and
$3i+10j\ge 51$ (see Proposition $2.17$ of \cite{GLS}).

A semiuniversal object in $\overset{\twoheadrightarrow}{\mathcal{D}\textit{ef}^{\;es}_{\,f}}$ (see Proposition 2.69 and Corollary 2.71 of \cite{GLS}) is given by:
\[
f(x,y)+s_1x^3y^5+s_2x^5y^4+s_3x^{11}y^2+s_4x^{12}y^2+s_5x^{14}y+s_6x^{15}y+s_7x^{16}y.
\]
See fig. \ref{fig:M3}. According to Theorem \ref{LAST}, the deformation defined by
\[
f(x,y)+s_1x^3y^5+s_2x^{5}y^4+s_3x^{14}y
\]
is a semiuniversal deformation of  $f$ in $\mathcal{D}\textit{ef}^{\;es,\mu}_{\,f}$.

\end{example}


\begin{thebibliography}{9}
\bibitem{AN}
 {A. Ara\'ujo and O. Neto,} {\em Moduli of Germs of Legendrian Curves}, Ann. Fac. Sci. Toulouse Math., Vol. XVIII, 4, (2009),  645–--657.
 
 \bibitem{BG}
 {R.-O. Buchweitz and G.-M. Greuel} {\em The Milnor Number and Deformations of Complex Curve Singularities}, Inventiones math. 58, 241-281(1980).
 
 
\bibitem{CN}
{J. Cabral and O. Neto,} {\em Microlocal versal deformations of the plane curves $y^k = x^n$}, C. R. Acad. Sci. Paris, Ser. I 347, (2009),  1409–--1414.
 
\bibitem{DGPS}
 W. Decker, G.-M. Greuel, G. Pfister, H. Sch{\"o}nemann,
\emph{\sc Singular} {4-0-2} --- {A} computer algebra system for polynomial computations.
\newblock {http://www.singular.uni-kl.de} (2015). 
 
\bibitem{Flenner}
H. Flenner,
\emph{Ein Kriterium f\"ur die Offenheit der Versalit\"at},
Math. Z. 178 (1981), 449--473. 
 
\bibitem{GLS}
{G. M. Greuel, C. Lossen, E. Shustin,} {\em Introduction to Singularities and Deformations}, Springer, (2007). 
 
\bibitem{KODAIRAS}
{G.-M. Greuel and G. Pfister,}  {\em Moduli for Singularities}, in Singularities, J.-P. Brasselet, London Mathematical Society Lecture Note Series, vol 20, (1994), 119--146.

\bibitem{HA}
{R. Hartshorne,} {\em Algebraic Geometry}, Springer Verlag.

\bibitem{KA}
{M. Kashiwara,} {\em Systems of Microdifferential Equations}, Notes by T. Monteiro Fernandes, Springer Verlag.

\bibitem{KK}
{M. Kashiwara, T. Kawai} {\em On Holonomic Systems of Microdifferential Equations, III. Systems with Regular Singularities}, publ. RIMS, Kyoto Univ. 17 (1981) 813-979.

\bibitem{SQH}
A. R. Martins, M. S. Mendes, O. Neto,
\emph{Moduli Spaces of semiquasihomogeneous Legendrian curves},
work in progress.


\bibitem{Martins}
A. R. Martins and O. Neto,
\emph{On the group of germs of contact transformations},
Portugaliae Mathematica, vol. 72, no. 4, 2015, p. 393-406.


\bibitem{Mendes}
M. S. Mendes and O. Neto,
\emph{Deformations of Legendrian Curves},
arXiv:1607.02873.

\bibitem{Neto}
O. Neto,
\emph{Equisingularity and Legendrian Curves},
Bull. London Math. Soc. 33, no. 5, (2001),
527--534.

\bibitem{WALL}{C.T.C. Wall}, {\em Singular points of plane curves}, London. Math. Soc.


\bibitem{WAHL}{J. Wahl}, {\em Equisingular deformations of plane algebroid curves}, Transactions Am. Math. Soc., vol 193, (1974), 143--170.


\bibitem{Z}
{O. Zariski}, {\em The Moduli Problem for Plane Branches},
American Mathematical Society, (2006).






\end{thebibliography}
\end{document}